\titleformat{\section}{\normalfont\scshape\centering}{\thesection}{1em}{}
  \titleformat{\subsection}{\bfseries}{\thesubsection}{1em}{}
  \titleformat{\subsubsection}{\bfseries}{\thesubsubsection}{1em}{}
\newtheorem{theorem}{Theorem}[section]
\newtheorem{corollary}[theorem]{Corollary}
\newtheorem{problem}[theorem]{Problem}
\newtheorem{lemma}[theorem]{Lemma}
\newtheorem{proposition}[theorem]{Proposition}
\theoremstyle{definition}
\newtheorem{definition}[theorem]{Definition}
\newtheorem{remark}[theorem]{Remark}
\newtheorem{conjecture}[theorem]{Conjecture}
\numberwithin{equation}{section}
\renewcommand{\pod}[1]{\allowbreak\mathchoice
  {\if@display \mkern 18mu\else \mkern 8mu\fi (#1)}
  {\if@display \mkern 18mu\else \mkern 8mu\fi (#1)}
  {\mkern4mu(#1)}
  {\mkern4mu(#1)}
}
\renewcommand{\Re}{\textnormal{Re}}
\newcommand\n{\mathbf{n}}
\renewcommand\deg{\mathrm{deg}}
\let\oldpmod\pmod
\renewcommand{\pmod}[1]{\hspace{-0.1cm}\oldpmod {#1}}
\begin{document}
\title{On the Liouville function at polynomial arguments}
\author{Joni Ter\"{a}v\"{a}inen}
\address{Department of Mathematics and Statistics \\
University of Turku, 20014 Turku\\
Finland}
\email{joni.p.teravainen@gmail.com}

\begin{abstract}
Let $\lambda$ denote the Liouville function. A problem posed by Chowla and by Cassaigne--Ferenczi--Mauduit--Rivat--S\'ark\"ozy asks to show that if $P(x)\in \mathbb{Z}[x]$, then the sequence $\lambda(P(n))$ changes sign infinitely often, assuming only that $P(x)$ is not the square of another polynomial. 

We show that the sequence $\lambda(P(n))$ indeed changes sign infinitely often, provided that either (i) $P$ factorizes into linear factors over the rationals; or (ii) $P$ is a reducible cubic polynomial; or (iii) $P$ factorizes into a product of any number of quadratics of a certain type; or (iv) $P$ is any polynomial not belonging to an exceptional set of density zero. 

Concerning (i), we prove more generally that the partial sums of $g(P(n))$ for $g$ a bounded multiplicative function exhibit nontrivial cancellation under necessary and sufficient conditions on $g$. This establishes a ``99\% version'' of Elliott's conjecture for multiplicative functions taking values in the roots of unity of some order. Part (iv) also generalizes to the setting of $g(P(n))$ and provides a multiplicative function analogue of a recent result of Skorobogatov and Sofos on almost all polynomials attaining a prime value. 
\end{abstract}

\maketitle

\tableofcontents

\section{Introduction}

Let $P(x)\in \mathbb{Z}[x]$ be a non-constant polynomial with positive leading coefficient. It is a notorious open problem to determine whether $P(n)$ represents infinitely many primes as $n$ ranges over the natural numbers $\mathbb{N}\coloneqq \{1,2,3,\ldots\}$. An old conjecture of Bunyakovsky  asserts that the obvious necessary condition, namely that $P$ should be irreducible and have no fixed prime divisor (that is, a prime $p$ that divides $P(n)$ for every $n\in \mathbb{N}$), is also sufficient. Dirichlet's theorem tells us that this is true for linear polynomials, but the conjecture remains open for all nonlinear polynomials. 

We consider a weakening of Bunyakovsky's conjecture that nevertheless remains open for all but a few types of nonlinear polynomials. This weaker form asks whether $P(n)$ produces infinitely many values having an \emph{odd} number of prime factors, provided that $P(x)$ is not the square of another polynomial. Defining the Liouville function by $\lambda(n)=(-1)^{\Omega(n)}$, where $\Omega(n)$ is the number of prime factors of $n$ with multiplicities (and for technical reasons we extend it as an even function to $n<0$, with $\lambda(0)\coloneqq 1$), the above question is asking whether $\lambda(P(n))=-1$ holds for infinitely many $n$. We formulate this, and the corresponding assertion for $\lambda(P(n))=+1$, as the following conjecture.

\begin{definition}
We say that a polynomial $P(x)\in \mathbb{Z}[x]$ is \emph{non-square} if $P(x)$ has positive leading coefficient and $P(x)$ is not of the form $cQ(x)^2$ for any integer $c$ and polynomial $Q(x)\in \mathbb{Z}[x]$.
\end{definition}

\begin{conjecture}[Liouville at polynomials]\label{conj_lambda} Let $P(x)\in \mathbb{Z}[x]$ be a non-square polynomial. Let $v\in \{-1,+1\}$. Then $\lambda(P(n))=v$ holds for infinitely many natural numbers $n$.
\end{conjecture}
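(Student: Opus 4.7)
The plan is to establish the stronger quantitative cancellation
\[
\sum_{n \le N} \lambda(P(n)) = o_P(N),
\]
which suffices for Conjecture~\ref{conj_lambda}: non-squareness of $P$ ensures $\lambda(P(n))$ is not eventually constant, and $o(N)$ partial sums then force both values $\pm 1$ to occur on sets of positive lower density, hence infinitely often. After absorbing any squared polynomial divisor of $P$ (which leaves $\lambda(P(n))$ unchanged), I may assume $P$ is squarefree in $\mathbb{Z}[x]$. I would then apply Heath-Brown's identity of level $K = \deg P + 1$ to $\lambda(m)\mathbf{1}_{m \sim X}$ with $X \asymp N^{\deg P}$, substitute $m = P(n)$, and split the resulting $K$-fold convolution sums dyadically, classifying each piece as a Type~I sum (some variable of size $\ge X^{1-\eta}$) or a Type~II sum (all variables balanced near $\sqrt{X}$).

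The Type~I analysis is routine: one has $\sum_{n \le N} \mathbf{1}_{d \mid P(n)} = \rho_P(d)N/d + O(\rho_P(d))$ with $\rho_P(d) := \#\{a \pmod d : P(a) \equiv 0 \pmod d\}$, and the mean-value estimate $\sum_{d \le D} \rho_P(d)/d \ll_P \log D$ coming from Landau's prime ideal theorem in the splitting field of $P$, combined with a sieve bound on higher moments of $\rho_P$, disposes of these terms after Abel summation. The resulting saving is essentially a power of $\log X$, which is comfortable.

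The Type~II pieces demand a genuinely bilinear estimate
\[
\sum_{\substack{m \sim M,\ k \sim K \\ mk \sim X}} \alpha_m \beta_k \cdot \#\{n \le N : P(n) = mk\} \;=\; o_P(N), \qquad M,\, K \asymp \sqrt{X},
\]
for arbitrary divisor-bounded coefficients $\alpha, \beta$, and this is where the main obstacle lies when $P$ is irreducible of degree at least $2$. A Cauchy--Schwarz argument reduces the task to controlling averages of $\#\{n_1, n_2 \le N : d \mid \gcd(P(n_1), P(n_2))\}$ over $d$ in a balanced range near $\sqrt{X}$; this is a level-of-distribution question for $P(n)$ that lies beyond the reach of current sieve technology for $\deg P \ge 2$, in sharp contrast to the linear-factor case, where each divisibility condition $d \mid P(n)$ localizes $n$ to a single residue class and the bilinear sum collapses. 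Breaking this barrier would appear to require either a new bilinear-forms bound specifically tailored to values of $P$, perhaps extracting cancellation from the inner Möbius sums via Hecke characters of the étale algebra $\mathbb{Q}[x]/(P)$ after a Vaughan-type decomposition, or a sufficiently quantitative input toward Elliott's conjecture evaluated at polynomial arguments. Both avenues are presently open, and this Type~II bilinear estimate for irreducible $P$ of degree $\ge 2$ is the genuine analytic bottleneck that obstructs the full conjecture.
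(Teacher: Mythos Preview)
This statement is a \emph{conjecture} in the paper, not a theorem; the paper does not prove it in full generality and explicitly presents it as open (indeed, it is Chowla's problem in its strong form~\eqref{e78} that you are targeting). So there is no ``paper's own proof'' to compare against, and your proposal is not a proof either --- as you yourself acknowledge in the final paragraph, the Type~II bilinear estimate for irreducible $P$ of degree $\geq 2$ is genuinely open, and without it the Heath-Brown decomposition does not close.

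Two further remarks. First, your opening logical step is garbled: the claim ``non-squareness of $P$ ensures $\lambda(P(n))$ is not eventually constant'' is precisely the content of the conjecture, so it cannot be invoked as an ingredient. What you presumably mean is simply that $o(N)$ cancellation of a $\pm 1$-valued sequence forces both signs to occur with density $1/2$; non-squareness plays no role in that implication. Second, your target $\sum_{n\le N}\lambda(P(n))=o(N)$ is strictly stronger than the conjecture and is exactly Chowla's formulation~\eqref{e78}, which is open already for $P(x)=x(x+2)(x+6)(x+8)$ (a product of linear factors), let alone for irreducible quadratics such as $x^2+1$. The level-of-distribution obstacle you identify near $d\asymp \sqrt{X}$ is real, and no known input from Hecke characters or Elliott-type results currently crosses it.

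For contrast, the partial results the paper does prove avoid the $o(N)$ route entirely: for products of linear factors it uses the entropy decrement argument and a $99\%$ inverse theorem to get a bound of the form $\leq (1-\delta)N$ rather than $o(N)$; for reducible cubics and products of shifted squares it uses algebraic identities (norm forms, Pell equations) to reduce to the linear-factor case or to derive a contradiction from eventual periodicity. None of these methods attempts a Type~I/II decomposition of $\lambda$ at polynomial arguments.
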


Conjecture~\ref{conj_lambda} was formulated by Cassaigne, Ferenczi, Mauduit, Rivat and S\'ark\"ozy in~\cite[Conjecture 1]{cfmrs}; earlier Chowla formulated in~\cite{chowla} the same assertion, but in the  significantly stronger form that 
\begin{align}\label{e78}
\sum_{n\leq x}\lambda(P(n))=o(x)    
\end{align}
for any such polynomial. Since~\eqref{e78} contains Chowla's famous conjecture on $k$-point correlations\footnote{In~\cite{chowla}, Chowla stated~\eqref{e78} in full generality, but the term \emph{Chowla's conjecture} is nowadays typically referring to the special case where $P$ is a product of linear factors.} and is wide open for any polynomials with nonlinear irreducible factors, it makes sense to study Conjecture~\ref{conj_lambda} as a stepping stone in its direction. 

Conjecture~\ref{conj_lambda}  is closely related to the parity problem as well as to correlations of the Liouville function; we shall discuss these connections in Section~\ref{sec: connections} after stating our results.

\section{Statement of results}

\subsection{Results for the Liouville function}\label{sub:lambda}

We now state our results on Conjecture~\ref{conj_lambda}. We shall in fact establish more general results concerning $g(P(n))$ for bounded multiplicative functions $g$ in Subsection~\ref{subsec:mult} (see Theorem~\ref{theo_multiplicative-nonpret} and Corollary~\ref{cor_multiplicative}), from which Corollary~\ref{cor_linear} follows as a special case.

\begin{corollary}[Liouville at polynomials factoring over $\mathbb{Q}$]\label{cor_linear}
Let $P(x)$ be a  non-square polynomial, and suppose that $P(x)$ factors into linear factors over $\mathbb{Q}$. Then for either $v\in \{-1,+1\}$ we have $\lambda(P(n))=v$ for infinitely many $n$. Moreover, the number of such $n\leq x$ is $\gg_{P} x$.
\end{corollary}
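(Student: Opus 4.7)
The plan is to deduce Corollary~\ref{cor_linear} from the more general multiplicative-function result, Corollary~\ref{cor_multiplicative}, applied to $g = \lambda$. The reduction has three steps: factorize $P$ into linear forms and isolate the ``odd-exponent'' part, apply the $99\%$-type cancellation bound from Corollary~\ref{cor_multiplicative}, and convert the mean-value bound into a lower bound on the density of sign changes.

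First, I factor $P(x) = c \prod_{i=1}^{k} L_i(x)^{e_i}$ with $c \in \Z_{>0}$ and $L_i(x) = a_i x + b_i$ pairwise non-proportional primitive integer linear forms with $a_i > 0$, and set $S := \{i : e_i \text{ is odd}\}$. The non-squareness hypothesis forces $S \neq \emptyset$: otherwise $P = c\bigl(\prod_i L_i^{e_i/2}\bigr)^2$ would be of the excluded form $c Q(x)^2$ with $Q \in \Z[x]$. By complete multiplicativity of $\lambda$ on $\N$ together with its evenness, for all $n$ sufficiently large (so that $P(n) > 0$ and each $L_i(n) > 0$) we have
$$
\lambda(P(n)) \;=\; \lambda(c) \prod_{i \in S} \lambda(L_i(n)) \;=:\; \lambda(c)\, f(n),
$$
so it suffices to show that $f(n) \in \{-1,+1\}$ takes both values on $\gg_P x$ integers $n \le x$.

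Next I invoke Corollary~\ref{cor_multiplicative} with $g = \lambda$ on the non-square polynomial $\prod_{i \in S} L_i(x)$. The Liouville function is non-pretentious in the Granville--Soundararajan sense --- by the prime number theorem, $\lambda$ does not pretend to be $\chi(n) n^{it}$ for any Dirichlet character $\chi$ or any real $t$ --- so the necessary-and-sufficient non-pretentiousness hypotheses of that corollary are met. The output I expect is a $99\%$-type bound
$$
\limsup_{x \to \infty} \frac{1}{x}\Bigl|\sum_{n \le x} f(n)\Bigr| \;\le\; 1 - \delta
$$
for some $\delta = \delta(P) > 0$. From this the density statement is routine: writing $N_\pm(x) := |\{n \le x : f(n) = \pm 1\}|$, we have $N_+(x) + N_-(x) = x + O_P(1)$ and $|N_+(x) - N_-(x)| \le (1-\delta)x$ for $x$ large, hence $\min(N_+(x), N_-(x)) \ge \tfrac{\delta}{2}x - O_P(1) \gg_P x$; multiplying by the fixed sign $\lambda(c) \in \{-1,+1\}$ transfers the same lower bound to both values $\lambda(P(n)) = \pm 1$.

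The entire weight of the argument rests on Corollary~\ref{cor_multiplicative} supplying a bound strictly below $1$; the remaining steps --- factoring $P$, tracking the sign $\lambda(c)$, and passing from the mean-value bound to a density of sign changes --- are bookkeeping. The one substantive consistency check is that the hypotheses of Corollary~\ref{cor_multiplicative} genuinely hold for $g = \lambda$ on $\prod_{i \in S} L_i$, which is exactly where the reduction $S \neq \emptyset$ enters: if $S$ were empty the function $f$ would be identically $+1$ and no subtrivial bound could possibly hold, so the non-square hypothesis is used precisely to guarantee the applicability of the black box.
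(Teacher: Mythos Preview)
Your approach is correct and matches the paper's: the paper explicitly states that Corollary~\ref{cor_linear} follows as a special case of Theorem~\ref{theo_multiplicative-nonpret} and Corollary~\ref{cor_multiplicative} with $g=\lambda$, and your reduction via the odd-exponent factors together with the mean-value-to-density conversion carries this out cleanly. One small slip: the hypothesis of Corollary~\ref{cor_multiplicative} is \emph{admissibility} of $(P,g)$, not non-pretentiousness of $g$ --- your non-square condition $S\neq\emptyset$ does give admissibility (pick a large prime $p$ and compare $n$ with $p\nmid P(n)$ to $n$ with $p\,\|\,L_i(n)$ for some $i\in S$), and the non-pretentiousness of $\lambda$ is what you need if you instead cite Theorem~\ref{theo_multiplicative-nonpret} directly to get the $1-\delta$ bound.
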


\begin{theorem}[Liouville at reducible cubics]\label{theo_cubic}
Let $P(x)=x(x^2-Bx+C)$ be a cubic polynomial with $B\geq 0$ and $C\in \mathbb{Z}$. Then for either $v\in \{-1,+1\}$ we have $\lambda(P(n))=v$ for infinitely many $n$. Moreover, the number of such $n\leq x$ is $\gg_{P}\sqrt{x}$.
\end{theorem}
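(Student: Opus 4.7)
The strategy is to exploit the factorization $P(n) = n \cdot Q(n)$, where $Q(n) = n^2 - Bn + C$ has discriminant $\Delta = B^2 - 4C$. Since $\lambda$ is completely multiplicative (on its even extension), $\lambda(P(n)) = \lambda(n) \lambda(Q(n))$, so the task reduces to controlling $\lambda(Q(n))$ along a sparse family of $n$ while retaining some freedom in $\lambda(n)$.

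The first step is to reduce to the case where $\Delta$ is not a perfect square. If $\Delta$ is a perfect square, then $Q$ splits into linear factors over $\mathbb{Q}$, so $P$ itself is a product of linear factors (after clearing the denominators, which does not affect $\lambda(P(n))$ up to a bounded factor). Corollary \ref{cor_linear} then gives the much stronger conclusion of $\gg_P x$ solutions $n \leq x$ with $\lambda(P(n)) = v$. Henceforth assume $\Delta$ is not a perfect square.

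Under this assumption I would construct, for a suitable positive squarefree integer $d$, a parametric family of integer solutions to $Q(n) = d m^2$. Substituting $X = 2n - B$ and $Y = m$ recasts this as the Pell-type equation
\[
X^2 - 4 d Y^2 = \Delta,
\]
whose integer solutions (once one exists, which happens for any $d$ equal to the squarefree kernel of some $Q(n_0)$) form finite unions of infinite orbits under the fundamental unit of $\mathbb{Z}[\sqrt d]$. For each such $n$, $\lambda(Q(n)) = \lambda(d m^2) = \lambda(d)$, hence $\lambda(P(n)) = \lambda(n) \lambda(d)$. Thus the sign of $\lambda(P(n))$ along this family is governed by $\lambda(n)$ together with a fixed choice of $\lambda(d)$.

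The main obstacle is obtaining the $\sqrt{x}$ count: a single Pell orbit yields only $O(\log x)$ values of $n \leq x$ because of exponential growth. To get past this, I would try one of two routes. Route (i): union over $\gg \sqrt{x}/\log x$ squarefree $d$'s with a prescribed value of $\lambda(d)$; these Pell families are essentially disjoint because $d$ is uniquely recoverable from $n$ as the squarefree part of $Q(n)$. Route (ii): find an algebraic substitution $n = f(m) \in \mathbb{Z}[m]$ of degree $2$ (so $m \leq \sqrt{x}$ corresponds to $n \leq x$) under which $P(f(m))$ decomposes in $\mathbb{Z}[m]$ as $g(m) h(m)^2$ with $g$ factoring into linear factors over $\mathbb{Q}$, and then apply Corollary \ref{cor_linear} directly to $g(m)$. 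Route (ii) matches the $\sqrt{x}$ scaling exactly, but a quick analysis shows that no such purely polynomial identity exists whenever $\Delta$ is not a square (the only candidates degenerate to the case already handled), so the actual argument must be some hybrid---for instance, combining route (i) with a second averaged parameter, or constructing a Pell-recurrent sequence whose arithmetic progression mod a large modulus allows one to separate $\lambda(n) = +1$ from $\lambda(n) = -1$ using Corollary \ref{cor_linear} applied to a related linearly factoring polynomial. Pinning down this extra ingredient, and verifying that both values of $\lambda(n) \lambda(d)$ are attained for $\gg \sqrt{x}$ integers $n \leq x$, is the crux.
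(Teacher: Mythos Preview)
Your initial reduction (split off the case where $\Delta=B^2-4C$ is a perfect square and apply Corollary~\ref{cor_linear}) matches the paper exactly. After that, however, both of your routes stall. Route~(i) is circular: the squarefree kernel $d$ of $Q(n)$ satisfies $\lambda(d)=\lambda(Q(n))$, so prescribing $\lambda(d)$ is the same as prescribing $\lambda(Q(n))$, and you are back to asking whether $\lambda(n)\lambda(Q(n))=v$ has many solutions---no progress has been made. Route~(ii), as you yourself note, fails because no polynomial substitution $n=f(m)$ makes $P(f(m))$ a square times a linearly splitting polynomial when $\Delta$ is not a square.

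The missing idea is not a substitution but the norm identity in $\mathbb{Q}(\sqrt{\Delta})$,
\[
(n^2-\Delta)\bigl((n+k)^2-\Delta\bigr)=(n(n+k)-\Delta)^2-\Delta k^2,
\]
combined with a contradiction argument. After a shift one works with $Q(n)=(n+B)(n^2-\Delta)$ and assumes $\lambda(Q(n))=v$ for all but $o(\sqrt{X})$ of the $n\le X$ in the relevant parity class. For $n\le\sqrt{X}$ in a fixed residue class modulo $2|k|$ chosen so that $k\mid n(n+k)-\Delta$, all three of $n$, $n+k$ and $N:=(n(n+k)-\Delta)/|k|\le X$ then lie in $S_v'$ for almost every such $n$, and the identity yields
\[
\lambda(n+B)\,\lambda(n+B+k)=v\,\lambda(|k|)\,\lambda\bigl(n(n+k)+B|k|-\Delta\bigr).
\]
The paper then chooses an even $k>0$ (explicitly $k=2B+2C+2$ or $k=2B-2C-2$, using $B\ge 0$) so that the discriminant $k^2-4B|k|+4\Delta$ is a perfect square; hence $n(n+k)+B|k|-\Delta=(n+t_1)(n+t_2)$ with $t_1,t_2\in\mathbb{Z}$. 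A short case check shows the congruence $n^2\equiv\Delta\pmod{2|k|}$ is solvable and that $\{B,B+k,t_1,t_2\}$ has more than two elements. The displayed relation is now a four-point $\lambda$-correlation over genuinely distinct linear forms holding for $(1-o(1))\sqrt{X}/(2|k|)$ values of $n$, which contradicts Corollary~\ref{cor_linear}. So the $\sqrt{x}$ count comes from feeding the \emph{quadratic} argument $N\asymp n^2/|k|$ back into the contradiction hypothesis, not from a polynomial change of variable; this is the ``hybrid'' step you were groping for but did not locate.
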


The restriction to $B\geq 0$ here is to simplify the proof somewhat, and it could likely be relaxed. However, the assumption that $P(x)$ has a linear factor is very important in the proof; compare with Problem~\ref{problem_cubic} below.

\begin{theorem}[Liouville at products of quadratic factors]\label{theo_quadprod}
Let $k\geq 1$ and let $P(x)=((x+h_1)^2+1)((x+h_2)^2+1)\cdots ((x+h_k)^2+1)$ with $h_1,h_2,\ldots, h_k$ distinct integers. Then for either $v\in \{-1,+1\}$ we have $\lambda(P(n))=v$ for infinitely many $n$.
\end{theorem}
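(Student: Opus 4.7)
The plan is to reduce the theorem to a Chowla-type correlation problem for the Liouville function $\lambda_K$ of $\mathbb{Z}[i]$ along the line $\{n+i:n\in\mathbb{Z}\}$, and then to invoke a Gaussian-integer analog of Corollary~\ref{cor_linear}. Let $\lambda_K(\alpha)=(-1)^{\Omega_K(\alpha)}$, where $\Omega_K(\alpha)$ counts the number of Gaussian prime ideals dividing $(\alpha)$ with multiplicity. For any $y\in\mathbb{Z}$, a direct prime-by-prime check using $y^2+1=(y+i)(y-i)$ yields the identity $\Omega(y^2+1)=\Omega_K(y+i)$: every pair of conjugate primes $\pi,\bar\pi$ above a split rational prime $p\equiv 1\pmod 4$ divides $y+i$ and $y-i$ symmetrically, while the ramified prime $(1+i)$ satisfies $v_{1+i}(y+i)=v_2(y^2+1)$. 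Therefore
\[
\lambda(P(n))=\prod_{j=1}^{k}\lambda\bigl((n+h_j)^2+1\bigr)=\prod_{j=1}^{k}\lambda_K(n+h_j+i)=\lambda_K(Q(n)),
\]
where $Q(x):=\prod_{j=1}^{k}(x+h_j+i)\in\mathbb{Z}[i][x]$ has $k$ distinct linear factors over $\mathbb{Z}[i]$ (the $h_j$ being distinct) and is hence non-square in the Gaussian sense.

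Next I would mirror the proof of Corollary~\ref{cor_linear} (equivalently Theorem~\ref{theo_multiplicative-nonpret} applied to $g=\lambda$) inside $\mathbb{Z}[i]$, with $\lambda_K$ replacing $\lambda$ and Hecke characters replacing Dirichlet characters. The key analytic inputs---Matom\"aki--Radziwi\l\l\ short-interval mean-value estimates, Hal\'asz-type pretentiousness bounds, and a Frantzikinakis--Host-style structural step---have known analogs over number fields, and $\lambda_K$ is non-pretentious to every Hecke character because $\lambda_K(\mathfrak{p})=-1$ on the positive-density family of inert Gaussian primes (those lying above rational primes $p\equiv 3\pmod 4$), which forces the pretentious distance to diverge. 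Assembling these ingredients should give nontrivial cancellation for $\sum_{n\leq N}\lambda_K(Q(n))$, from which the conclusion that $\lambda(P(n))$ takes each value in $\{-1,+1\}$ for infinitely many $n$ follows at once.

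The principal obstacle is that the natural Gaussian version of Theorem~\ref{theo_multiplicative-nonpret} yields cancellation for averages over two-dimensional regions $\{\alpha\in\mathbb{Z}[i]:|\alpha|\leq N\}$, whereas the conclusion we need is for $\alpha=n+i$ restricted to the one-dimensional line $\{b=1\}$. Bridging this gap seems to require either first establishing a two-dimensional cancellation result and then descending to the one-dimensional statement by a Cauchy--Schwarz/short-interval reduction in the $b$-direction, or else reworking the pretentiousness/structure theorem directly for the (not-$\mathbb{Z}$-multiplicative) sequence $n\mapsto\lambda_K(n+i)$ by keeping the analysis on the full $\mathbb{Z}[i]$-ideal structure and projecting to $\mathbb{Z}$ only at the last step. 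Verifying that the non-pretentiousness input is not destroyed by the restriction to a single rational line is, I expect, where most of the technical work lies.
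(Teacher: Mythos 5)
Your opening identity is correct: since no inert Gaussian prime can divide $y+i$, one indeed has $\Omega(y^2+1)=\Omega_K(y+i)$ and hence $\lambda(P(n))=\lambda_K(Q(n))$ with $Q(x)=\prod_j(x+h_j+i)$. But this translation does not reduce the difficulty, and the rest of your plan has a genuine gap rather than a technical one. The obstacle you flag at the end --- that a Gaussian analogue of Theorem~\ref{theo_multiplicative-nonpret} would give cancellation over two-dimensional regions of $\mathbb{Z}[i]$ while you need the one-dimensional line $\{n+i\}$ --- cannot be bridged by Cauchy--Schwarz or a short-interval reduction: restricting a two-dimensional average to a single line is a loss comparable to passing from intervals of length $X$ to intervals of length $O(1)$, and none of the Matom\"aki--Radziwi\l{}\l{} or entropy-decrement technology survives that. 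More fundamentally, the proof of Theorem~\ref{theo_multiplicative-nonpret} uses at every stage that the correlated sequences have the form $n\mapsto g(an+h)$ with $g$ multiplicative on $\mathbb{N}$: the entropy decrement step rewrites $g(p)g(an+h)=g(apn+ph)$ and dilates $n\mapsto pn$. The sequence $n\mapsto\lambda_K(n+h_j+i)$ is not of this form, and the corresponding Gaussian dilation $(n+i)\mapsto \pi(n+i)$ leaves the line $\Im(\alpha)=1$, so the argument cannot even begin. Already for $k=1$ your program demands cancellation in $\sum_{n\leq N}\lambda_K(n+i)=\sum_{n\leq N}\lambda(n^2+1)$, which is the irreducible-quadratic case of Chowla's conjecture \eqref{e78} and is wide open (cf.\ Problem~\ref{problem_quad}, where even the infinitude of sign changes of $\lambda(n^2+d)$ is recorded as unresolved); so what you call ``most of the technical work'' is in fact the entire open problem.

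The paper's proof is entirely different and purely algebraic. One first shows that if $\lambda(P(n))$ were eventually constant, then $\eta(n)=\lambda(n^2+1)$ would satisfy a recurrence over $\{-1,+1\}$ and hence be eventually periodic. One then derives a functional equation for the periodic function from the norm-form identity $(x^2+1)(y^2+1)(z^2+1)=(xyz-x-y-z)^2+(xy+yz+zx-1)^2$ --- this is where the multiplicativity of the norm on $\mathbb{Q}(i)$, the germ of your observation, actually enters --- after using quadratic reciprocity to produce enough solutions of the attendant divisibility condition in prescribed residue classes. Solving the functional equation forces $\eta(n)=(-1)^{rn}\chi(n^2+1)$ for a real character $\chi$, and Legendre's theorem on the negative Pell equation $x^2-py^2=-1$ then yields a contradiction. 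If you want to salvage your idea, the productive direction is not to import analytic machinery into $\mathbb{Z}[i]$ but to exploit the norm identity combinatorially in this way.
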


Theorem~\ref{theo_quadprod} provides the first instance of an infinite family of high degree polynomials that do not split into linear factors for which Conjecture~\ref{conj_lambda} has been verified. 

\subsection{Results for multiplicative functions}\label{subsec:mult}

\subsubsection{Non-pretentious functions}

We shall now state our results on more general bounded multiplicative functions. Let $g\colon \mathbb{N}\to \mathbb{D}\coloneqq \{z\in \mathbb{C}\colon \,\, |z|=1\}$ be a multiplicative function, that is, $g(mn)=g(m)g(n)$ for all $m,n\geq 1$ coprime (and we extend $g$ as an even function to $\mathbb{Z}$, with $g(0)\coloneqq 1$). Suppose that $g$ takes values in the roots of unity of some order. Then the image $g(\mathbb{N})$ is finite, and (if $g$ is completely multiplicative) it consists precisely of the roots of unity of some order $q\in \mathbb{N}$. Now, under reasonable assumptions on $g$ and $P$, we wish to show that the sequence $(g(P(n)))_{n\geq 1}$ is not eventually constant (the case $g=\lambda$ corresponds to progress on Conjecture~\ref{conj_lambda}). The necessary assumption we must make on $P$ and $g$ is the following. 

\begin{definition}[Admissible pairs] Let $P(x)\in \mathbb{Z}[x]$ have positive leading coefficient, and let $g\colon  \mathbb{N}\to \mathbb{D}$ be a multiplicative function taking values in the roots of unity of some order. We say that the pair $(P,g)$ is \emph{admissible} if there exists a prime $p$ such that the sequence $(g(p^{v_p(P(n))}))_{n\geq 1}$ is not constant. 
\end{definition}

Note that if $(P,g)$ is not admissible, then $g(P(n))=g(P(1))$ for all $n$. Therefore, we must exclude all non-admissible polynomials from our considerations. 

One may now pose the following conjecture. 

\begin{conjecture}[Multiplicative functions at polynomials]\label{conj_mult} Let $(P,g)$ be admissible, and let $v\in g(\mathbb{N})$. Then there are infinitely many $n\in \mathbb{N}$ such that $g(P(n))\neq v$.
\end{conjecture}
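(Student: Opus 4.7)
The plan is to isolate the $p$-adic part of $g(P(n))$ at the admissibility prime and then reduce the conjecture to an equidistribution question for the remaining factor across arithmetic progressions. I would first reduce to the case where $g$ takes values in the $q$-th roots of unity for a single fixed $q$. The admissibility hypothesis then supplies a prime $p$ and residue classes $a_1, a_2 \Mod{p^M}$ (for $M$ large) on which $g(p^{v_p(P(n))})$ takes two distinct values $u_1 \neq u_2$, with $v_p(P(n))$ constant on each class. Writing $P(n) = p^{v_p(P(n))} P_p(n)$ with $(P_p(n), p) = 1$, we have the factorisation $g(P(n)) = g(p^{v_p(P(n))}) g(P_p(n))$.

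It then suffices to find, for at least one target value $v \in g(\mathbb{N})$, two residue classes $a_i$ for which the restricted frequencies
\begin{equation*}
A_i(x) := \frac{\#\{n \leq x : n \equiv a_i \Mod{p^M},\, g(P(n)) = v\}}{\#\{n \leq x : n \equiv a_i \Mod{p^M}\}}
\end{equation*}
have different limits, for then $g(P(n))$ cannot be eventually equal to $v$; running this over the finite image of $g$ rules out any eventually constant behaviour. The strongest form of this input, namely positive density of each $v \in g(\mathbb{N})$ in the sequence $(g(P(n)))$, would follow from cancellation of $\sum_{n \leq x} g(P_p(n)) \chi(n)$ for every non-principal character $\chi \Mod{p^M}$, combined with a Hal\'asz-type estimate $\frac{1}{x}\sum_{n \leq x} g(P_p(n)) = o(1)$ whenever $g$ is non-pretentious.

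The main obstacle is exactly this cancellation step. When $P$ splits into linear factors over $\mathbb{Q}$, the quantity $g(P_p(n))$ is essentially a product $\prod_j g(L_j(n))$ of $g$ evaluated at linear forms, and one can attack the resulting correlations via the Matom\"aki--Radziwi{\l}{\l}--Tao logarithmic Elliott-type bounds together with the 99\% Elliott statement announced in the abstract, yielding Corollary \ref{cor_linear}. For Theorems \ref{theo_cubic} and \ref{theo_quadprod}, I would marry this linear-factor analysis with the identity that $(x+h)^2 + 1$ is a norm in $\mathbb{Z}[i]$, reparametrising over Gaussian integers and reducing certain sums to multiplicative problems in $\mathbb{Z}[i]$ where half-dimensional sieve or Dirichlet-series arguments apply. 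For a polynomial containing an irreducible factor of degree $\geq 2$ that lies outside these families, the required cancellation is essentially of Chowla depth and presently unavailable, which is why Conjecture \ref{conj_mult} must remain open in full generality and can only be attacked through the four structural restrictions listed in Section \ref{sub:lambda}.
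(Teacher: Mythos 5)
The statement you were asked to prove is Conjecture \ref{conj_mult}, which the paper poses as an \emph{open} conjecture and does not prove; it only establishes the partial cases recorded in Corollary \ref{cor_multiplicative} and Theorems \ref{theo_cubic}, \ref{theo_quadprod}, \ref{thm_almostall}. Your proposal is therefore right not to claim a complete proof, and your opening reduction is sound: admissibility does supply, for $M$ large, residue classes modulo $p^M$ on which $v_p(P(n))$ is constant and $g(p^{v_p(P(n))})$ takes two distinct values, and exhibiting a single $v$ for which the frequencies $A_1,A_2$ have different limits does preclude $g(P(n))$ from being eventually constant. This is close in spirit to how the paper handles the pretentious case (Theorem \ref{theo_multiplicative-pret}), where the Chinese remainder theorem produces two classes $n_1',n_2'\Mod{W}$ on which a truncated multiplicative function takes values differing by at least some $\kappa>0$, and almost periodicity of the $W$-tricked function then forces different behaviour on the two classes. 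Note, however, that even this step requires the smooth-values input (property S, Proposition \ref{prop_smooth}) to guarantee that $g(P(Wn+n_j'))$ is actually controlled by its small prime factors on a positive proportion of $n$; you do not mention this, and its unavailability for general $P$ is precisely why the paper cannot run the pretentious argument beyond quadratics and products of linear factors.

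For the non-pretentious regime your roadmap asks for too much in one place and proposes a method that fails in another. Requiring $\frac{1}{x}\sum_{n\leq x}g(P_p(n))\chi(n)=o(1)$ when $P$ is a product of $k\geq 2$ linear forms is the full Elliott/Chowla conjecture, which is open; the paper's Theorem \ref{theo_multiplicative-nonpret} only proves the bound $1-\delta$ (via the entropy decrement argument, a dense-model step, a 99\% inverse theorem, and a Fourier-analytic argument on roots of unity), and it is this much weaker saving that already yields Corollary \ref{cor_linear}. More seriously, your proposed treatment of Theorem \ref{theo_quadprod} by viewing $(x+h)^2+1$ as a norm in $\mathbb{Z}[i]$ and applying a half-dimensional sieve collides with the parity problem: no sieve can distinguish $\lambda=+1$ from $\lambda=-1$, which is exactly the obstruction the paper discusses in Section \ref{sec: connections}. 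The actual argument there is of a different nature: one assumes $\lambda(P(n))$ is eventually constant, deduces that $\lambda(n^2+1)$ is eventually periodic, converts complete multiplicativity into a functional equation whose solutions are classified as $(-1)^{rn}\chi(4n^2+1)$, and derives a contradiction from Legendre's theorem on the negative Pell equation. (Your instinct about norm identities is closer to the mark for Theorem \ref{theo_cubic}, where a norm-form identity reduces matters to Corollary \ref{cor_linear}.) So while your framework correctly locates where the difficulty lies, the concrete mechanisms you propose for the tractable cases are not the ones that work, and the conjecture itself remains unproved both in the paper and in your proposal.
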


This conjecture is in a sense best possible, since for $(P,g)$ admissible it is not necessarily the case that $g(P(n))=v$ has infinitely many solutions for \emph{every} $v\in g(\mathbb{N})$. As a counterexample, let $P(n)=k!n+1$ and let $g(p)$ be arbitrary for $p\leq k$ and $g(p)=-1$ for $p>k$, and extend $g$ completely multiplicatively. Then $g(P(n))\in \{-1,+1\}$ always. 

We obtain various partial results towards  Conjecture~\ref{conj_mult} in the following theorems. The first of these notably verifies a ``99\% version'' of Elliott's conjecture~\cite{elliott-book},~\cite[Conjecture 1.3]{tao-teravainen-duke} on correlations of multiplicative functions in a sense that is explained below. See Section~\ref{sec:notation} for the pretentious distance notation $\mathbb{D}(f,g;x)$ involved. 

\begin{theorem}[99\% Elliott conjecture]\label{theo_multiplicative-nonpret} Let $k\in \mathbb{N}$, let $g_1,\ldots, g_k\colon  \mathbb{N}\to \mathbb{D}$ be multiplicative functions, and let $a_1,\ldots, a_k$ and $h_1,\ldots, h_k$ be positive integers with $a_ih_j\neq a_jh_i$  whenever $i\neq j$. Suppose that $g_1$ takes values in the roots of unity of some order $q$ and 
\begin{align}\label{eq26}
\mathbb{D}(g_1,\chi,\infty)= \infty   
\end{align}
for every Dirichlet character $\chi$.   
Then, for some $\delta>0$ (depending only on $a_i,h_i$ and $q$), we have
\begin{align*}
\limsup_{x\to \infty}\left|\frac{1}{x}\sum_{n\leq x}g_1(a_1n+h_1)\cdots g_k(a_kn+h_k)\right|\leq 1-\delta.   
\end{align*}
\end{theorem}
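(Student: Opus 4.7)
The plan is a contradiction argument. Write $F(n) := \prod_{i=1}^{k} g_i(a_i n + h_i)$, and suppose for contradiction that $\limsup_{x \to \infty} \left| \frac{1}{x} \sum_{n \le x} F(n) \right| = 1$; I aim to deduce that $\mathbb{D}(g_1, \chi, \infty) < \infty$ for some Dirichlet character $\chi$, contradicting the hypothesis \eqref{eq26}.

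By compactness, pass to a subsequence $x_j \to \infty$ and multiply $F$ by a unit phase $\alpha$ so that $\frac{1}{x_j}\sum_{n \le x_j} \alpha F(n) \to 1$. Since $|\alpha F(n)| = 1$ pointwise, this is equivalent to $\alpha F(n) \to 1$ in the Ces\`aro $L^2$ sense: $\frac{1}{x_j}\sum_{n \le x_j}|\alpha F(n) - 1|^2 \to 0$. Raising this convergence to the $q$-th power and using $g_1^q \equiv 1$ (which holds because $g_1$ takes values in $\mu_q$) eliminates the factor $g_1$, yielding $\prod_{i=2}^{k} g_i^q(a_i n + h_i) \to \alpha^{-q}$ in $L^2$-density on $[1, x_j]$; in particular,
\begin{align*}
\left| \frac{1}{x_j} \sum_{n \le x_j} \prod_{i=2}^{k} g_i^q(a_i n + h_i) \right| \to 1.
\end{align*}

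The crucial step is a quantitative inverse theorem for this near-extremal $(k-1)$-point correlation: it should force each factor $g_i^q$ ($i \ge 2$) to be extremely pretentious, in the sense that there exist a Dirichlet character $\chi_i$ and a real $t_i$ with $\mathbb{D}(g_i^q, \chi_i n^{it_i}, x_j) \to 0$. For $k - 1 = 1$ this is the quantitative Hal\'asz inequality in arithmetic progressions; for $k - 1 = 2$ it follows from the quantitative two-point Elliott theorem of Tao--Ter\"av\"ainen. For larger $k$, one would iterate the same $q$-th power contradiction argument (stripping off one factor at a time using the already-derived pretentious structure) or invoke an entropy / $U^2$-norm argument in the style of Tao's work on the logarithmic Chowla conjecture.

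Substituting these pretentious approximations back into $\alpha F(n) \to 1$, taking $q$-th roots on each $g_i$ (absorbing the $q$-th root-of-unity ambiguity into a multiplicative correction in $\mu_q$, which can itself be folded into the Dirichlet character), changing variables to $m = a_1 n + h_1$, and using the behaviour of pretentious distance under linear substitutions of the argument, I obtain $\mathbb{D}(g_1, \tilde\chi(m) m^{it}, \infty) < \infty$ for some Dirichlet character $\tilde\chi$ and $t \in \R$. Raising $g_1$ to the $q$-th power and applying the triangle inequality for $\mathbb{D}$ to the identity $g_1^q \equiv 1$ against $\tilde\chi^q m^{iqt}$ forces $qt = 0$ and $\tilde\chi^q$ principal, so $t = 0$ and hence $\mathbb{D}(g_1, \tilde\chi, \infty) < \infty$, contradicting the hypothesis. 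The main obstacle is the quantitative inverse theorem in the previous paragraph: for $k \ge 4$ the full Elliott conjecture for the $(k-1)$-point correlation is open, and a workable quantitative inverse statement requires substantial additional input beyond the standard pretentious toolbox.
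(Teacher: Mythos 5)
There is a genuine gap, and it sits at the heart of your argument. Your ``crucial step'' asks for a quantitative inverse theorem asserting that a near-extremal $(k-1)$-point correlation of the functions $g_i^q$ forces each $g_i^q$ to satisfy $\mathbb{D}(g_i^q,\chi_i(n)n^{it_i};x_j)\to 0$. Even in the two-point case, the logarithmic Elliott theorem of Tao only gives, in contrapositive form, that a correlation bounded away from $0$ forces $\mathbb{D}(g_i^q,\chi_i(n)n^{it_i};x)=O(1)$ for some character and some $t_i$ --- bounded, not tending to zero --- and for $k-1\geq 3$ no such inverse statement is known at all; you acknowledge this but then defer the entire difficulty of the theorem to it. Worse, even granting that inverse step in full strength, the information it produces concerns $g_i^q$ and cannot be converted into information about $g_i$: the ``$q$-th root-of-unity ambiguity'' you propose to ``fold into the Dirichlet character'' is a choice of element of $\mu_q$ at every prime, i.e.\ an arbitrary $\mu_q$-valued multiplicative function --- exactly the class of functions the theorem is about, and emphatically not a character. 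Concretely, if $g_2$ is any completely multiplicative function with values in $\mu_q$, then $g_2^q\equiv 1$ is maximally pretentious while $g_2$ itself carries no structure whatsoever, so substituting back into $\alpha F(n)\to 1$ tells you nothing about $g_1$. A smaller but real issue: the theorem requires $\delta$ to depend only on $a_i,h_i,q$, uniformly in the functions, whereas your compactness/subsequence contradiction would at best give $\limsup<1$ separately for each fixed tuple $(g_1,\ldots,g_k)$.

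The paper's actual route is quite different and is designed precisely to avoid needing any inverse theorem for multiplicative functions in several variables. It first uses the entropy decrement argument to replace the one-parameter correlation by a two-parameter one weighted by primes, then removes the prime weight via the dense model and generalized von Neumann theorems. The resulting unweighted bilinear average is analyzed by a bespoke $99\%$ inverse theorem (Proposition \ref{prop_99}) whose conclusion is that $g_1$ correlates at level $1-o(1)$ with a \emph{polynomial phase} on most short intervals --- a statement about a single function, not about the tuple. The contradiction is then obtained from Proposition \ref{prop_polyphase}: on major arcs the Matom\"aki--Radziwi\l{}\l{} theorem applies, and on minor arcs the fact that $g_1$ takes values in the finite set $\mu_q$ while the polynomial phase equidistributes caps the correlation at $\frac{q}{\pi}\sin(\pi/q)<1$ via a rearrangement and Jensen argument. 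This last discreteness mechanism, which is what ultimately produces the uniform $\delta=\delta(q)$, has no counterpart in your proposal.
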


\textbf{Remarks.}
\begin{itemize}

 \item Note that $\delta$ in Theorem~\ref{theo_multiplicative-nonpret} is uniform over all choices of $g_1$ if $q$ is fixed. Our value of $\delta$ is also effectively computable, although it would decay like a tower of exponentials as $k\to \infty$.

    \item Let us explain the terminology ``99\% Elliott conjecture''. In additive combinatorics, there are numerous inverse theorems that characterize all instances where a quantity of interest is $\geq 1-\delta$ times the trivial bound; this called the ``99\% case''. The hope is that the analysis of this case sheds light on the more difficult task of characterizing instances where the quantity is $\geq \delta$ (the ``1\% case''). For example, a $99\%$ inverse theorem for the Gowers norms was established in~\cite[Theorem 1.2]{eisner-tao}, a $99\%$ Balog--Szemerédi--Gowers theorem was proved in~\cite{shao-bsg}, and a $99\%$ version of the two-point Chowla conjecture was proved in~\cite[Corollary 2]{mr-annals}.

    \item As was shown by Matom\"aki, Radziwi\l{}\l{} and Tao in~\cite[Appendix B]{mrt-average}, the original statement of Elliott's conjecture fails in the complex-valued case on a technicality, namely that a multiplicative function can pretend to be  two different Archimedean characters at different scales. However, in Theorem~\ref{theo_multiplicative-nonpret}, no such correction is not necessary, since $g_1$ takes values in the roots of unity of fixed order. Therefore, the result is in line with Elliott's original formulation of his conjecture. 
    
    \item Some special cases of Theorem~\ref{theo_multiplicative-nonpret} have been established earlier. The case $g_j(n)=g_1(n)^{d_j}$ with $d_1+\cdots +d_k\not \equiv 0\pmod q$ follows (in the 1\% case, where  $0<\delta<1$ is arbitrary) from~\cite[Corollary 1.6]{tao-teravainen-duke}. The case  where again $d_1+\cdots+d_k\not \equiv 0\pmod q$, and additionally $q$ is prime and $a_i=1$, was proved earlier by Elliott~\cite{elliott-conj} (in the 99\% case). Theorem~\ref{theo_multiplicative-nonpret} removes any such additional assumptions on the functions $g_i$. We note that in the case $d_1+\cdots +d_k\equiv 0\pmod q$ the methods used in ~\cite{tao-teravainen-duke} or~\cite{elliott-conj} fail; in fact, these approaches are not sensitive to the shifts $h_i$ being distinct, and this assumption becomes relevant only if $d_1+\cdots+d_k \equiv 0\pmod q$.  Note that it is the case of $d_1+\cdots+d_k \equiv 0\pmod q$ (with $q=2$) that importantly includes Corollary~\ref{cor_linear} as a special case.

\end{itemize}

\subsubsection{Pretentious functions}

Theorem~\ref{theo_multiplicative-nonpret} dealt with $g(P(n))$ for $g$ non-pretentious (in the sense of Granville and Soundararajan) and with $P(x)$ factoring into linear factors over $\mathbb{Q}$. 
 We can also deal with pretentious $g$ (for a wider class of polynomials, in fact); see Theorem~\ref{theo_multiplicative-pret} below. By combining Theorem~\ref{theo_multiplicative-nonpret} with Theorem~\ref{theo_multiplicative-pret} below, we see that Conjecture~\ref{conj_mult} is true whenever our polynomial $P$ factors completely. 

\begin{corollary}[Multiplicative functions at polynomials factoring over $\mathbb{Q}$]\label{cor_multiplicative}
Let $P(x)$ be a polynomial factoring into linear factors over $\mathbb{Q}$, and let $g\colon  \mathbb{N}\to \mathbb{D}$ be a multiplicative function for which $(P,g)$ is admissible. Then for any $v\in g(\mathbb{N})$ we have $g(P(n))\neq v$ for infinitely many $n$, and in fact for a positive lower density set of $n$.
\end{corollary}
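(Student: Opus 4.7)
The plan is to deduce Corollary \ref{cor_multiplicative} via a dichotomy based on pretentiousness of $g$: invoking Theorem \ref{theo_multiplicative-nonpret} in the non-pretentious case and the forthcoming Theorem \ref{theo_multiplicative-pret} in the pretentious case. First I would write $P(x)=c\prod_{i=1}^{\ell}(a_i x+h_i)^{d_i}$ with $a_i>0$ and the pairs $(a_i,h_i)$ pairwise non-proportional, noting that admissibility of $(P,g)$ singles out at least one exponent $d_{i_0}$ for which $g^{d_{i_0}}$ is non-trivial, so that $g(P(n))$ is not eventually constant.

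In the non-pretentious case (meaning $\mathbb{D}(g^k,\chi;\infty)=\infty$ for every Dirichlet character $\chi$ and every $k$ with $g^k$ non-trivial), the goal is to express $\sum_{n\leq x}g(P(n))$ in the product form to which Theorem \ref{theo_multiplicative-nonpret} applies. For completely multiplicative $g$ (e.g.\ $g=\lambda$) this is immediate: $g(P(n))=g(c)\prod_i g(a_in+h_i)^{d_i}$, so setting $g_i:=g^{d_i}$ and checking that $g_{i_0}$ inherits non-pretentiousness makes Theorem \ref{theo_multiplicative-nonpret} directly applicable. For general multiplicative $g$, I would first restrict to the density-one set of $n$ on which the linear factors $a_in+h_i$ have only the forced common prime divisors (those controlled by the pairwise resultants), and then rewrite $g(P(n))=g(c)\prod_i G_i(a_in+h_i)$ for suitable multiplicative functions $G_i$ built from the values of $g$ at prime powers. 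Theorem \ref{theo_multiplicative-nonpret} then yields
\[
\left|\frac{1}{x}\sum_{n\leq x}g(P(n))\right|\leq 1-\delta
\]
for some $\delta>0$ depending on $P$ and the order $q$ of $g$. A standard argument converts this into the positive-density statement: if $g(P(n))=v$ held on all but an $\eta$-fraction of $n\leq x$, the partial sum would have modulus $\geq 1-O(\eta)$, contradicting the bound once $\eta<\delta/C$.

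In the pretentious case, where $\mathbb{D}(g^k,\chi;\infty)<\infty$ for some $k$ and some Dirichlet character $\chi$, I would quote Theorem \ref{theo_multiplicative-pret} directly; as indicated in the surrounding remarks, that result is stated for a broader class of polynomials than merely linearly-splitting ones, so the present corollary drops out as a special case.

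The main obstacle is the reduction in the non-pretentious case. For completely multiplicative $g$ it is essentially automatic, but in the general multiplicative setting I must handle common prime factors among the shifted linear factors $a_in+h_i$ via a sieve-type argument, and must verify that the non-pretentiousness of $g$ passes to the particular multiplicative function $G_{i_0}$ taking the role of $g_1$ in Theorem \ref{theo_multiplicative-nonpret} — a point that depends on admissibility selecting an appropriate exponent $d_{i_0}$ for which $g^{d_{i_0}}$ is genuinely non-trivial.
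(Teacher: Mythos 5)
Your overall strategy --- a dichotomy between a pretentious and a non-pretentious case, invoking Theorem \ref{theo_multiplicative-nonpret} in the former's complement and Theorem \ref{theo_multiplicative-pret} in the former, with Proposition \ref{prop_smooth} supplying property S --- is exactly the route the paper indicates. The gap is in how you calibrate the dichotomy. Theorem \ref{theo_multiplicative-pret} requires that $g$ \emph{itself} satisfy $\mathbb{D}(g,\chi(n)n^{it};\infty)<\infty$, whereas your ``pretentious case'' only assumes $\mathbb{D}(g^k,\chi;\infty)<\infty$ for \emph{some} power $k$. These are genuinely different: let $\chi$ be a real non-principal character and let $g$ be completely multiplicative with $g(p)$ a randomly chosen square root of $\chi(p)$ among the fourth roots of unity; then $g^2$ pretends to be $\chi$ while $g$ is almost surely non-pretentious. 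With $P(x)=x(x+1)$ your case analysis sends this example to Theorem \ref{theo_multiplicative-pret}, whose hypothesis fails, even though it is Theorem \ref{theo_multiplicative-nonpret} (with $g_1=g_2=g$) that actually handles it. So ``quote Theorem \ref{theo_multiplicative-pret} directly'' does not work as written.

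The dichotomy has to be taken on the specific powers $g^{d_i}$ attached to the multiplicities of the linear factors of $P$. If some $g^{d_{i_0}}$ is non-pretentious, Theorem \ref{theo_multiplicative-nonpret} applies with that factor in the role of $g_1$ (admissibility is what guarantees $g^{d_{i_0}}\not\equiv 1$ for a suitable $i_0$, as you note). If instead \emph{every} $g^{d_i}$ is pretentious, then by the triangle inequality for the pretentious distance $g^{e}$ is pretentious for $e=\gcd(d_1,\ldots,d_\ell)$; when $e=1$ this forces $g$ itself to be pretentious and Theorem \ref{theo_multiplicative-pret} applies to $(P,g)$, but when $e\geq 2$ one must first write $P=cQ^{e}$ and pass to the pair $(Q,g^{e})$, verifying that admissibility transfers, before Theorem \ref{theo_multiplicative-pret} can be used. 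This intermediate step is absent from your write-up. The other issues you flag --- non-completely-multiplicative $g$, shared prime factors among the $a_in+h_i$, and converting the $1-\delta$ bound into a positive-lower-density statement --- are real but routine; the case analysis is where the argument currently breaks.
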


\textbf{Remarks.}
\begin{itemize}

\item In~\cite{cfmrs}, it was shown that if $P(x)=(a_1x+h_1)\cdots (a_kx+h_k)$ satisfies the additional restrictions $a_i=a$ and  $h_1\equiv \cdots \equiv h_k\pmod a$ and if $g\colon  \mathbb{N}\to \{-1,+1\}$ is non-pretentious, then $g(P(n))=\pm 1$ for infinitely many $n$. This special case turns out to be considerably easier than the general case, since the extra assumption on $P$ implies (just using the pigeonhole principle) that if $g(P(n))$ is eventually constant, then $g(an+b)$ satisfies a recurrence relation, and hence it is periodic. This connection to recurrences disappears when the mentioned condition does not hold; therefore, it seems that even the case $k=2$ was not known in full generality before Tao's work~\cite{tao-chowla} on the two-point Elliott conjecture. Moreover, the method of~\cite{cfmrs} does not seem to produce a strong quantitative lower bound for the number of $n\leq x$ at which $g(P(n))$ takes a given sign.

\item  Using the methods of the recent paper~\cite{klurman-shkredov-xu}, it should be possible to show that if $g\colon   \mathbb{N}\to \{-1,+1\}$ is a Rademacher random multiplicative function, then $g(P(n))$ takes each sign with positive density for a wide class of polynomials $P$ (see also~\cite[Theorem 1.1]{klurman-shkredov-xu} for a result for Steinhaus random multiplicative functions).
\end{itemize}

For stating Theorem~\ref{theo_multiplicative-pret} on the pretentious case, we need the following definition.

\begin{definition} We say that a polynomial $P(x)\in \mathbb{Z}[x]$ with positive leading coefficient has \emph{property $\mathcal{S}$} if there exists a constant $\eta_0>0$ such that for all $q,b\geq 1$ we have
\begin{align*}
 \liminf_{x\to \infty}\frac{1}{x}|\{n\leq x,n\equiv b\pmod q\colon  \,\, P(n)\,\, \textnormal{is}\,\, n\textnormal{-smooth}\}|\geq \frac{\eta_0}{q}.   
\end{align*}

\end{definition}

\begin{theorem}[Multiplicative functions at polynomials, pretentious case]\label{theo_multiplicative-pret} Let $g\colon  \mathbb{N}\to \mathbb{D}$ be a completely multiplicative function satisfying $\mathbb{D}(g,\chi(n)n^{it};\infty)<\infty$ for some Dirichlet character $\chi$ and some real number $t$.  Let $P(x)\in \mathbb{Z}[x]$ be a polynomial such that $(P,g)$ is admissible, and such that $P$ satisfies property $\mathcal{S}$. Then there exists $\delta>0$ (depending on $P$ and $g$) such that
\begin{align}\label{e75}
\limsup_{x\to \infty}\left|\frac{1}{x}\sum_{n\leq x}g(P(n))\right|\leq 1-\delta.    \end{align}
\end{theorem}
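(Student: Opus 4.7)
The plan is to establish a Wirsing-type mean-value formula for $\frac{1}{x}\sum_{n\leq x}g(P(n))$ with an explicit Euler-product main term, and then deduce the strict bound $<1$ from admissibility. First, using $\mathbb{D}(g,\chi(n)n^{it};\infty)<\infty$, factor $g(n)=\chi(n)n^{it}h(n)$ where $h$ is multiplicative with $\mathbb{D}(h,1;\infty)<\infty$; equivalently, $\sum_p (1-\Re h(p))/p<\infty$. For $n$ large enough that $P(n)>0$, we have $g(P(n))=\chi(P(n))P(n)^{it}h(P(n))$.

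The analytic heart of the proof is to establish a Hal\'asz/Wirsing-type asymptotic
\[
\frac{1}{x}\sum_{n\leq x}g(P(n))=(1+o(1))\cdot \frac{x^{d\cdot it}}{1+d\cdot it}\cdot\prod_{\ell}L_\ell, \qquad L_\ell:=\sum_{w\geq 0}\rho_w^{(\ell)}\,g(\ell^w),
\]
where $d=\deg P$ and $\rho_w^{(\ell)}$ is the $\ell$-adic density of $\{n:v_\ell(P(n))=w\}$, so that $\sum_w \rho_w^{(\ell)}=1$. Property S is the crucial input here: it guarantees that $P(n)$ is $n$-smooth for a positive density of $n$ even within arbitrary arithmetic progressions, which (together with pretentiousness of $h$) allows one to isolate the Euler-product structure of $h(P(n))$ from the contribution of large prime divisors of $P(n)$ via a Hal\'asz-type error bound. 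The Archimedean factor $x^{d\cdot it}/(1+d\cdot it)$ is produced by partial summation against $P(n)^{it}$.

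Now let $p$ be the prime from the admissibility hypothesis. By definition, there exist distinct $w_1,w_2$ with $\rho_{w_i}^{(p)}>0$ and $g(p^{w_1})\neq g(p^{w_2})$. Since the $g(p^w)$ have modulus $1$ and are not all equal on the support of $\rho^{(p)}$, the strict triangle inequality forces
\[
|L_p|=\Bigl|\sum_w\rho_w^{(p)}g(p^w)\Bigr|<\sum_w\rho_w^{(p)}=1.
\]
For every $\ell\neq p$, $|L_\ell|\leq 1$, and $|1+d\cdot it|^{-1}\leq 1$. Combining,
\[
\limsup_{x\to\infty}\Bigl|\frac{1}{x}\sum_{n\leq x}g(P(n))\Bigr|\leq |L_p|\cdot\prod_{\ell\neq p}|L_\ell|\cdot|1+d\cdot it|^{-1}\leq |L_p|=:1-\delta<1,
\]
as required.

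All the real difficulty lies in establishing the mean-value formula. For $\deg P\geq 2$, a Wirsing-style Euler-product representation for $\sum g(P(n))$ is substantially more delicate than in the linear case, because one must control the contribution of prime divisors of $P(n)$ exceeding $n$; the small-prime contributions are what assemble into $\prod_\ell L_\ell$, whereas the large-prime contributions require a Hal\'asz-type bound (applied to an incomplete pretentious character sum) to be absorbed into the error. Property S is precisely the hypothesis that makes this possible, as the density of $n$-smooth values of $P(n)$ in arbitrary arithmetic progressions is exactly what Hal\'asz's inequality needs in order to bound the large-prime tail.
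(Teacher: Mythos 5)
Your reduction from the mean-value formula to the conclusion is fine (the strict triangle inequality applied to $L_p$ via admissibility is essentially the same local obstruction the paper exploits), but the mean-value formula itself is the entire difficulty, and as stated it is not available --- indeed the paper opens Section 6 by explaining exactly why. The obstruction is quantitative: writing $g=\chi(n)n^{it}h$ with $\mathbb{D}(h,1;\infty)<\infty$ only tells you that the ``bad'' primes where $h(p)$ is far from $1$ satisfy $\sum_{p\ \mathrm{bad}}1/p<\infty$. For $\deg P=d\geq 2$ the values $P(n)$, $n\leq x$, involve primes up to $x^{d}$, and the number of $n\leq x$ with $P(n)$ divisible by a bad prime $p\in(x,x^{d}]$ is controlled only by $\sum_p \rho_P(p)(x/p+1)$, whose ``$+1$'' tail is not $o(x)$. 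So the large-prime contribution cannot be ``absorbed into the error'' by any Hal\'asz-type bound, and property S does not rescue this: it gives a \emph{positive proportion} $\eta_0$ of $n$ with $P(n)$ being $n$-smooth, not a proportion $1-o(1)$. On the remaining (possibly $1-\eta_0$) proportion of $n$, the value $h(P(n))$ is essentially arbitrary, so no asymptotic of the form $(1+o(1))\cdot\frac{x^{dit}}{1+dit}\prod_\ell L_\ell$ can be extracted; the paper's own hypothetical example following Proposition \ref{prop_smooth} (a pretentious $g$ with $g(P(n))=-1$ for all $n$ when property S fails) shows the Euler-product main term is genuinely false in the absence of smoothness, since there $\prod_\ell L_\ell$ would be close to $1$ while the average is $-1$.

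The paper's proof avoids the asymptotic entirely. It modifies $g$ above $x$ to agree with $\chi(p)p^{it}$, so that on the set $\mathcal{S}_j$ of $n$ with $P(Wn+j)$ being $x$-smooth the modified and original functions agree, and then invokes an almost-periodicity lemma (Lemma \ref{le_concentration}): for $n\in\mathcal{S}_j$, $g(P(Wn+j))$ concentrates near $(cWn)^{idt}\widetilde{\chi}(P(j))g'_{\leq w}(P(j))$. Admissibility plus the Chinese remainder theorem produce two residues $n_1',n_2'\pmod W$ whose associated local values $g'_{\leq w}(P(n_1'))$, $g'_{\leq w}(P(n_2'))$ differ by some $\kappa>0$, and property S guarantees both $\mathcal{S}_{n_1'}$ and $\mathcal{S}_{n_2'}$ have density $\geq\eta_0$. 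The average of $g(P(n))$ then cannot have modulus close to $1$, because a unimodular near-constant cannot be close to two values $\kappa$ apart on two sets of positive density. To repair your argument you would need to replace the global asymptotic by this local, positive-density concentration statement; as written, the proposal rests on an unproved (and, without further input, unprovable) analytic formula.
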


Note that the dependence of $\delta$ on $g$ and $P$ here is necessary, as can be seen from the cases $P(n)=n$, $g(n)=(-1)^{v_p(n)}$ and $P(n)=k!n+1$, $g(p)=1-1/p$ for all primes $p$. Also note that if $g$ is assumed to be merely multiplicative, then we can choose $g$ to satisfy $g(p)=1$ for all primes $p$ and $g(n^2)=\lambda(n)$ for all $n\in \mathbb{N}$, and thus~\eqref{e75} with $P(n)^2$ in place of $P(n)$ is at least as difficult as Conjecture~\ref{conj_lambda}.  

To complement this result, we have the following proposition, which follows with minor modifications from a result of Harman~\cite{harman-2008} (which improved on work of Dartyge~\cite{dartyge}) that $n^2+1$ takes $n^{4/5+\varepsilon}$-smooth values with positive lower density.

\begin{proposition}[Smooth values of polynomials]\label{prop_smooth}
Let $P(x)\in \mathbb{Z}[x]$ be a non-constant polynomial with positive leading coefficient. If $P$ is either quadratic or $P$ factors into linear factors over $\mathbb{Q}$, then $P$ has property $\mathcal{S}$.
\end{proposition}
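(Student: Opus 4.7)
The plan is to treat the two cases of the proposition separately, and in each case to establish the stronger statement that the density of $n\leq x$ in the progression $n\equiv b\pmod q$ for which $P(n)$ is $n$-smooth is at least $(1-o_{x\to\infty}(1))/q$; this gives Property~S with $\eta_0$ any fixed constant in $(0,1)$.

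For the linear case, write $P(x)=c\prod_{i=1}^{k}(a_ix+b_i)$ with $a_i\in\mathbb{Z}_{>0}$ and $b_i\in\mathbb{Z}$. The factor $L_i(n):=a_in+b_i$ can fail to be $n$-smooth only if it has a prime factor $p>n$; since $L_i(n)/n\leq a_i+1$ for $n$ large, we may write $L_i(n)=pk$ with $k\in\{1,2,\ldots,a_i\}$. For each such $i$ and $k$, the equation $a_in+b_i=pk$ together with the congruence $n\equiv b\pmod q$ forces $p$ into a single residue class modulo a divisor of $a_iqk$, with $p\leq(a_ix+b_i)/k$. The Brun--Titchmarsh inequality bounds the number of such primes by $O_{P,q}(x/\log x)$, and summing over $i$ and $k$ the total number of bad $n\leq x$ in the AP is $o_{P,q}(x)$. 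Since the AP contains $x/q+O(1)$ integers up to $x$, the claim follows.

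For the quadratic case we may assume $P(x)=ax^2+bx+c$ with $a>0$ is irreducible over $\mathbb{Q}$, since otherwise $P$ factors linearly and the previous case applies. Completing the square gives $4aP(x)=(2ax+b)^2+D$ with $D:=4ac-b^2$, where $-D$ is non-square. Substituting $m=2an+b$, the congruence $n\equiv b\pmod q$ becomes a residue condition modulo $q':=2aq$ on $m$, and since $m\asymp n$ the $n$-smoothness of $P(n)$ is equivalent (up to the bounded fixed factor $4a$) to the $m$-smoothness of $m^2+D$. Thus the quadratic case of Property~S reduces to the following arithmetic-progression version of Harman's theorem: \emph{for each fixed $D\in\mathbb{Z}$ with $-D$ non-square, each modulus $q'\geq 1$ and each residue class modulo $q'$, a proportion $\gg 1/q'$ of the $m\leq x$ in the progression satisfy that $m^2+D$ is $m^{4/5+\varepsilon}$-smooth, and hence $m$-smooth.}

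To prove this arithmetic-progression version, I would adapt Harman's proof of the case $D=1$, $q'=1$. Harman's argument is built from a weighted sieve together with type-I and type-II bilinear estimates for divisor sums of $m^2+1$, the latter relying on the arithmetic of the Gaussian integers and Weil-type exponential sum bounds. The generalisation to $m^2+D$ is standard, replacing $\mathbb{Z}[i]$ by the quadratic order $\mathbb{Z}[\sqrt{-D}]$, while restricting $m$ to a residue class modulo $q'$ modifies only the local densities in the sieve at primes dividing $q'$ (each contributing a factor $\gg 1/q'$ in the resulting Euler product) and leaves the bilinear estimates intact once the residue condition is detected by additive characters. The only real obstacle is bookkeeping the uniformity of the implied constants in $D$ and $q'$ through Harman's iterative sieve; this is the sort of ``minor modification'' of the existing literature that the proposition is pointing to.
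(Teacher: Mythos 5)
Your proof is correct and follows essentially the same route as the paper: the linear case by a union bound over the factors (you count the bad $n$ via Brun--Titchmarsh where the paper uses the density of integers with a large prime factor, but the structure is identical), and the quadratic case by completing the square to reduce to $m^2+D$ on an arithmetic progression and then invoking the extension of Harman's theorem to general non-square discriminants and to progressions, exactly as the paper does. The only slip is your opening claim that both cases yield density $(1-o_{x\to\infty}(1))/q$: Harman's result gives only a positive proportion, so the quadratic case yields just $\eta_0/q$ for some fixed $\eta_0>0$ --- which is all that property S requires and is what your quadratic paragraph actually establishes.
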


Property $\mathcal{S}$ should certainly be true for all polynomials (in a much stronger form, in fact; see~\cite{martin}), although this has not been proved. But if property $\mathcal{S}$ did fail for $P(x)$, then $P(n)$ might be $n$-smooth for density $0$ of the integers, in which case it would be hard to rule out a situation where there would exist a rapidly growing sequence $(x_k)$ such that $P(n)$ always had an odd number of prime factors from the set $\mathcal{X}\coloneqq \bigcup_{k\geq 1}[x_k,x_k^{1+1/k^2}]$. And if such an $(x_k)$ did exists, defining a completely multiplicative function $g$ by $g(p)=(-1)^{1_{p\in \mathcal{X}}}$,  we would have $g(P(n))=-1$ for all $n$, and $g$ would be pretentious since $\sum_{k\geq 1} k^{-2}<\infty$, so $g(P(n))$ would not hypothetically satisfy the conclusion of Theorem~\ref{theo_multiplicative-pret}.

\subsubsection{Almost all polynomials}

The problem of $g(P(n))$ for $g$ non-pretentious and $P(x)$ not factoring into linear (or quadratic) factors seems challenging for specific polynomials. However, we are able to show that if all the polynomials of degree $d$ are ordered by height (the maximum absolute value of their coefficients), then 100\% of them satisfy Conjecture~\ref{conj_mult}, in fact in a stronger sense.

\begin{theorem}[Multiplicative functions at almost all polynomials]\label{thm_almostall}
Let $d\geq 1$, $q\geq 1$, and let $g\colon  \mathbb{N}\to \mathbb{D}$ be a completely multiplicative function such that $g^{q}= 1$  and $\mathbb{D}(g^j,\chi;\infty)=\infty$ for every Dirichlet character $\chi$ and every $1\leq j\leq q-1$.
\begin{enumerate}[label=\upshape(\roman*)]
    \item  When ordered by height, almost all polynomials $P(x)\in \mathbb{Z}[x]$ of degree $d$ with positive leading coefficient have the property that for every $v\in g(\mathbb{N})$ there exists $n\geq 1$ such that $g(P(n))=v$.\\ 
    \item  For almost all $a\in \mathbb{N}$ in the sense of logarithmic density, the following holds. For every $v\in g(\mathbb{N})$ there exists $n\geq 1$ such that $g(n^d+a)=v$. 
\end{enumerate}
\end{theorem}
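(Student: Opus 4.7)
The plan is to combine a second-moment method with orthogonality on the group of $q$-th roots of unity, reducing both parts to the two-point Elliott conjecture with logarithmic averaging (a theorem of Tao), which is applicable here since each $g^j$ with $1\leq j\leq q-1$ is non-pretentious by hypothesis (this hypothesis also forces $g(\N)$ to be the full group of $q$-th roots of unity). Specifically, since $g^q\equiv 1$, for any $v\in g(\N)$ we have the identity
\begin{align*}
\#\{n\leq N:\,g(P(n))=v\}=\frac{N}{q}+\frac{1}{q}\sum_{j=1}^{q-1}\overline{v}^{j}\sum_{n\leq N}g^j(P(n)),
\end{align*}
so it will suffice to prove $\sum_{n\leq N}g^j(P(n))=o(N)$ for each $1\leq j\leq q-1$, for almost all polynomials $P$ (part (i)) or almost all $a$ with $P(x)=x^d+a$ (part (ii)); a union bound over the finitely many $(j,v)$ handles the quantifier over $v$.

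For part (ii), fix $1\leq j\leq q-1$ and consider the logarithmically averaged second moment
\begin{align*}
M_A(N):=\sum_{A<a\leq 2A}\frac{1}{a}\biggl|\sum_{n\leq N}g^j(n^d+a)\biggr|^2=\sum_{n_1,n_2\leq N}\sum_{A<a\leq 2A}\frac{g^j(a+n_1^d)\overline{g^j(a+n_2^d)}}{a}.
\end{align*}
Diagonal pairs $n_1=n_2$ contribute $O(N)$. For each off-diagonal pair, the inner sum is a logarithmic two-point correlation of $g^j$ with nonzero shift $n_1^d-n_2^d$, which by Tao's theorem is $o(\log A)$ as $A\to\infty$. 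Choosing $N=N(A)$ to grow slowly enough that these bounds are uniform in the $O(N^2)$ distinct shifts involved, one obtains $M_A(N)=o(N^2\log A)$. Chebyshev's inequality then implies that for each fixed $\varepsilon>0$, the logarithmic density of $a\in[A,2A]$ with $|\sum_{n\leq N}g^j(n^d+a)|>\varepsilon N$ tends to $0$; diagonalizing over $\varepsilon\to 0$ and combining with the orthogonality reduction yields part (ii).

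For part (i), write $P(x)=c_dx^d+\cdots+c_1x+c_0$ and fix $c_1,\ldots,c_d$ with $c_d\geq 1$, averaging over $c_0\in[-H,H]$. Setting $Q(x):=c_dx^d+\cdots+c_1x$, the second moment
\begin{align*}
\E_{|c_0|\leq H}\biggl|\sum_{n\leq N}g^j(c_0+Q(n))\biggr|^2=\sum_{n_1,n_2\leq N}\E_{|c_0|\leq H}g^j(c_0+Q(n_1))\overline{g^j(c_0+Q(n_2))}
\end{align*}
decomposes in the same way. Since $Q$ is eventually strictly monotonic, $Q(n_1)=Q(n_2)$ holds for at most $O(N)$ pairs $(n_1,n_2)$ (bounded trivially), and for every other off-diagonal pair the inner expectation is a two-point correlation with nonzero shift $Q(n_1)-Q(n_2)$. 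Applying the two-point Elliott theorem (either a natural-density version, or deducing the needed estimate from Tao's logarithmic version via dyadic comparison, noting that averaging over $c_0$ is averaging over translates of $g^j$) makes these terms $o(1)$, giving $\E_{|c_0|\leq H}|\cdot|^2=o(N^2)$. Chebyshev's inequality then implies almost all $c_0$ are good, and since $c_1,\ldots,c_d$ were arbitrary, a Fubini-type argument gives that almost all polynomials of height at most $H$ are good.

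The principal technical difficulty is the uniformity of the two-point Elliott bound in the shift $h=n_1^d-n_2^d$, which in our application can range up to $N^d$; Tao's theorem is stated for each fixed shift with a decay rate that a priori depends on $h$, so summing $o(1)$ over $N^2$ off-diagonal pairs requires a quantitative bound with explicit dependence on $h$, forcing a careful calibration of $N$ relative to the averaging range. A secondary concern in part (i) is passing between logarithmic and natural density; since the averaging variable $c_0$ corresponds to shifts rather than to the summation variable, one can employ results on shift-averaged two-point correlations, which are considerably more robust than their fixed-shift counterparts, to bypass this issue.
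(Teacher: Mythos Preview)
Your approach to part (ii) is correct and essentially the same as the paper's: both use a second-moment argument in the logarithmic average over $a$, reduce via the orthogonality relation to two-point logarithmic correlations of $g^j$ with shifts $n_1^d-n_2^d$, and invoke Tao's theorem, choosing the range $N$ of $n$ to grow slowly enough to absorb the non-uniformity in the shifts.

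Your approach to part (i), however, has a genuine gap. After fixing $c_1,\ldots,c_d$ and averaging the second moment over $c_0\in[-H,H]$, the off-diagonal terms become
\[
\E_{|c_0|\leq H}\,g^j(c_0+Q(n_1))\overline{g^j(c_0+Q(n_2))},
\]
which is a \emph{Ces\`aro} (not logarithmic) two-point correlation of $g^j$. The Ces\`aro two-point Elliott/Chowla conjecture is open; Tao's theorem only gives the logarithmic version, and one cannot in general upgrade logarithmic to Ces\`aro by ``dyadic comparison''. Your suggested workaround via shift-averaged results (Matom\"aki--Radziwi\l{}\l{}--Tao) does not apply either: those results control $\E_{h\leq H}|\E_{n\leq x}g(n)\overline{g(n+h)}|$, i.e.\ an average over \emph{all} shifts in an interval, whereas your shifts $Q(n_1)-Q(n_2)$ lie in a sparse structured set of size $O(N^2)$ and moreover can be as large as the averaging range $H$ itself (since $|c_i|\leq H$). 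Finally, the Fubini step would require the exceptional set of $c_0$ to be small \emph{uniformly} in $c_1,\ldots,c_d$ of height up to $H$, which compounds the difficulty.

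The paper circumvents this entirely by taking a different route for part (i). Rather than a second moment over one coefficient, it expands the full product $\prod_{h\leq H}1_{g(P_{\bm a}(h))\neq v}$, pigeonholes to fix all but the top \emph{two} coefficients, and observes that the resulting average is controlled (via the generalized von Neumann theorem) by $\max_{1\leq j<q}\|g^j\|_{U^H[N]}$. The key input is then the Frantzikinakis--Host theorem that aperiodic multiplicative functions have vanishing Gowers norms of every order, which is available in Ces\`aro form. Thus the paper trades the (unavailable) Ces\`aro two-point bound for Gowers uniformity of arbitrarily high order.
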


\textbf{Remarks.}
\begin{itemize}

\item The restriction to multiplicative functions $g$ with $g^j$ being non-pretentious for all $1\leq j <q$ is a natural one, since together with Hal\'asz's theorem it implies that $g(\mathbb{N})=\mu_q$, where $\mu_q$ is the set of $q$th roots of unity, whereas without such an assumption it appears hard to characterize what the set $g(\mathbb{N})$ looks like. Note also that if we only assume $\mathbb{D}(g^j,\chi;\infty)=\infty$ for $j=1$, then the claim fails. Indeed, take $g=\lambda h$, where $h$ is the completely multiplicative function satisfying $h(2)=i$ and $h(p)=\chi(p)$ for $p\geq 3$, with $\chi$ any real Dirichlet character. Then $g(\mathbb{N})=\{\pm 1,\pm i\}$, but if $P$ takes only odd values, then $g(P(n))=\pm 1$.

\item Recently, Skorobogatov and Sofos~\cite{sofos} made a breakthrough by proving that almost all polynomials $P(x)$, which have no fixed prime divisor, produce at least one prime value. Theorem~\ref{thm_almostall} can be thought of as a multiplicative function analogue of their theorem, where we find solutions to $g(P(n))=v$ as opposed to $1_{\mathbb{P}}(P(n))=1$. Our proof of Theorem~\ref{thm_almostall} is very different from and shorter than that in~\cite{sofos}, but it does not recover the case of the primes. We also note that our proof applies even if we fix the values of $d-1$ of the coefficients of $P(x)$ to have some values in $[-N,N]$ and let the other two coefficients vary in $[-N,N]$. 

\end{itemize}

\subsection{Multivariate polynomials}

We also briefly consider $\lambda(P(x,y))$, where $P(x,y)$ is a two-variable polynomial. Unlike in the case of one-variable polynomials, certain nonlinear two-variable polynomials have been shown to produce infinitely many primes; see for instance~\cite{iwaniec-1973},~\cite{fi},~\cite{hb},~\cite{hb-moroz},~\cite{maynard}.

We can show that $\lambda(P(x,y))$ has infinitely many sign changes for certain polynomials $P(x,y)$ not covered by these theorems.

\begin{theorem}[Liouville at special multivariate polynomials]\label{thm_multivariate} Let $v\in \{-1,+1\}$, $1\leq a,b\leq 100$ and $k\geq 1$. Then we have
\begin{enumerate}[label=\upshape(\roman*)]
    \item  $\lambda(am^3+bn^4)=v$ for infinitely many coprime pairs $(m,n)$.\\
   \item $\lambda(am^2+bn^k)=v$ for infinitely many coprime pairs $(m,n)$.
\end{enumerate}

\end{theorem}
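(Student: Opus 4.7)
The plan is to reduce each part of Theorem~\ref{thm_multivariate} to a one-variable $\lambda$-sum statement handled by a result earlier in the paper.

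For part~(i), the idea is to choose $n$ along a special subfamily so that $am^3+bn^4$, viewed as a polynomial in $m$, factors with a rational linear factor. The identity $a(m^3+c^3)=a(m+c)(m^2-mc+c^2)$ reduces this to the Diophantine problem $bn^4=ac^3$, which is solved by the parametrization $n=ab^2s^3$, $c=ab^3s^4$ for any $s\in\mathbb{N}$. After the linear shift $u=m+c$ the polynomial takes the form
\[
am^3+bn^4=a\cdot u\cdot(u^2-3cu+3c^2),
\]
which is $\lambda(a)$ times a polynomial of the form $u(u^2-Bu+C)$ with $B=3c\ge 0$ and $C=3c^2\in\mathbb{Z}$. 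Applying Theorem~\ref{theo_cubic} yields $\gg\sqrt{x}$ values of $u\le x$ at which $\lambda(u(u^2-Bu+C))$ attains either prescribed sign, so that $\lambda(am^3+bn^4)$ takes both signs infinitely often. The coprimality condition $\gcd(m,n)=\gcd(u-c,ab^2s^3)$ simplifies to $\gcd(u,abs)=1$, since every prime dividing $abs$ also divides $c$; fixing $s=1$ reduces the requirement to $\gcd(u,ab)=1$, a positive-density condition. Ensuring this coprimality survives Theorem~\ref{theo_cubic}'s quantitative lower bound is the remaining technical step, to be addressed by inspecting the construction used in its proof.

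For part~(ii), I would eliminate the dependence on $k$ entirely by fixing $n=1$. Then $am^2+bn^k=am^2+b$ is independent of $k$, and coprimality $\gcd(m,1)=1$ is automatic. The problem thus reduces to showing that, for each $1\le a,b\le 100$ and each $v\in\{-1,+1\}$, one has $\lambda(am^2+b)=v$ for infinitely many $m$. Rewriting $a(am^2+b)=(am)^2+ab$ gives $\lambda(am^2+b)=\lambda(a)\,\lambda(N^2+ab)$ with $N=am$, so it suffices to show $\lambda(N^2+C)=w$ for infinitely many $N$ with $a\mid N$, for each $C=ab\in\{1,\ldots,10000\}$ and each $w\in\{\pm 1\}$. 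The case $C=1$ follows immediately from Theorem~\ref{theo_quadprod} specialized to $k=1$, $h_1=0$ (the arithmetic progression condition $a\mid N$ poses no obstruction, since the resulting set of good $N$ has positive density).

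For $C\ge 2$, the plan is to invoke an extension of Theorem~\ref{theo_quadprod} to polynomials of the form $(x+h)^2+C$: the proof of Theorem~\ref{theo_quadprod} exploits the factorization of $x^2+1$ through $\mathbb{Z}[i]$, and the analogous argument should carry over with the ring of integers of $\mathbb{Q}(\sqrt{-C})$ (together with its class group and unit structure) replacing $\mathbb{Z}[i]$. The main obstacle for part~(ii) is therefore establishing this extension of Theorem~\ref{theo_quadprod} from ``$+1$'' to general ``$+C$''; for part~(i), the main obstacle is verifying that the coprimality condition $\gcd(u,ab)=1$ is compatible with the $\gg\sqrt{x}$ lower bound produced by Theorem~\ref{theo_cubic}.
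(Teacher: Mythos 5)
Your proposal takes a completely different route from the paper, and both halves have genuine gaps. For part (i): the reduction via $bn^4=ac^3$ and the substitution $u=m+c$ correctly produces $am^3+bn^4=a\,u(u^2-3cu+3c^2)$, but the coprimality requirement $\gcd(u,ab)=1$ is fatal as the argument stands. Theorem~\ref{theo_cubic} only guarantees $\gg\sqrt{x}$ good values of $u\leq x$, while the set of $u$ failing $\gcd(u,ab)=1$ has positive density; a $\sqrt{x}$-sized set can be entirely contained in it, so no counting argument rescues you. You would have to reprove Theorem~\ref{theo_cubic} with $u$ restricted to a reduced residue class modulo $ab$, which means threading an extra congruence through its entire contradiction argument (the residue class $n_0\pmod{2|k|}$ there must be made compatible with coprimality to $ab$); this is not a routine footnote. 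For part (ii), the situation is worse: for $C=ab\geq 2$ you invoke ``an extension of Theorem~\ref{theo_quadprod} from $x^2+1$ to $x^2+C$,'' but this is precisely Problem~\ref{problem_quad}, which the paper explicitly states is open and on which it makes no progress. The proof of Theorem~\ref{theo_quadprod} leans on Legendre's theorem for the negative Pell equation $x^2-py^2=-1$, and the paper notes that the generalized Pell equation $x^2-py^2=D$ does not admit the needed uniform solvability statement. So your plan for part (ii) amounts to assuming an open problem. (Also, Theorem~\ref{theo_quadprod} asserts infinitude, not positive density, so even your $C=1$ justification for imposing $a\mid N$ is not supported as written --- it only survives because $C=ab=1$ forces $a=1$.)

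The paper's actual proof is far more elementary and sidesteps all of this. If $\lambda(h)=v$ for a single value $h=am^3+bn^4$ with $(m,n)$ coprime, then $(z,m,n)=(1,m,n)$ is a coprime solution of the generalized Fermat equation $hz^2=am^3+bn^4$; since $1/2+1/3+1/4>1$, Beukers's theorem upgrades one coprime solution to infinitely many, and complete multiplicativity gives $\lambda(am^3+bn^4)=\lambda(hz^2)=\lambda(h)=v$ for every such solution. The single seed solution is found by a finite numerical search with $n=1$ and $m\leq 20$ (resp.\ $m\leq 30$ for part (ii), where the exponent triple $(2,2,k)$ again satisfies the Beukers condition and specializing $n=1$ removes $k$). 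None of the analytic machinery of Theorems~\ref{theo_cubic} or~\ref{theo_quadprod} is needed.
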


\textbf{Remarks.}
\begin{itemize}
    
    \item The upper bound $100$ in Theorem~\ref{thm_multivariate} could be increased; the proof involves a numerical computation, whose threshold can be raised with more computing time.

    \item Earlier results on $\lambda(P(x,y))$ include that of Helfgott~\cite{helfgott-lambda}, who handled the case where $P(x,y)$ is a binary cubic form; a result of Green, Tao and Ziegler~\cite{gt-linear},~\cite{gt-mobius},~\cite{green_tao_ziegler} who dealt with those $P(x,y)$ that split completely over $\mathbb{Q}$ (see also a result of Matthiesen~\cite{matt} generalizing this to unbounded multiplicative functions); and a result of Frantzikinakis and Host~\cite{fh-jams}, which allows some quadratic as well as linear factors. All of these results in fact show that $\lambda(P(x,y))$ has mean zero whenever $P$ is one of these polynomials. In the case of Theorem~\ref{thm_multivariate}, our methods are elementary and unable to say anything about this much stronger mean $0$ property. 

\end{itemize}

\section{Connections to other problems}\label{sec: connections}

\subsection{Connection to the parity problem}

Finding sign changes in the sequence $\lambda(P(n))$ requires bypassing the parity problem in sieve theory (see~\cite{selberg},~\cite[Section 16.4]{opera}).  This is in contrast to the problem of finding \emph{almost prime} values of polynomials, where sieve theory has been very successful; see for instance~\cite[Chapter 9]{halberstam-richert},~\cite{iwaniec},~\cite{lemkeoliver}. There is also an obstruction for applying the circle method to the problem of $\lambda(P(n))$, since the circle method is unable to deal with binary problems such as $\lambda(n)=\lambda(n+1)=1$. In the proof of Theorem~\ref{theo_multiplicative-nonpret}, we avoid these obstructions by applying Tao's entropy decrement argument~\cite{tao-chowla} that allows one to transfer problems such as $\lambda(n)=\lambda(n+1)=1$ to ``finite complexity'' problems such as $\lambda(n)=\lambda(n+p)=-1$ which are then amenable to (higher order) Fourier analysis. For Theorem~\ref{theo_quadprod}, in turn, we apply the theory of Pell equations among other things to find suitable $n$; this makes the problem much more algebraic in nature and hence avoids the aforementioned difficulties.

\subsection{Connection to Chowla's conjecture}

Another reason why Conjecture~\ref{conj_lambda} seems challenging is its  close connection to Chowla's conjecture on correlations of the Liouville function. Chowla's conjecture~\cite{chowla} asserts that if $a_1,\ldots, a_k, b_1,\ldots, b_k$ are natural numbers with $a_ib_j\neq a_jb_i$ whenever $i\neq j$, then 
\begin{align}\label{eq25}
\sum_{n\leq x}\lambda(a_1n+b_1)\cdots \lambda(a_kn+b_k)=o(x)    
\end{align}
as $x\to \infty$. See ~\cite{tao-chowla},~\cite{tao-teravainen-duke},~\cite{tt-ant} for progress on the conjecture, as well as in the function field setting recent breakthroughs of Sawin and Shusterman~\cite{sawin-shust1},~\cite{sawin-shust2}.

Corollary~\ref{cor_linear} can be viewed as establishing a 99\% model case of Chowla's conjecture. Conversely, progress on Chowla's conjecture can easily be translated to progress on Conjecture~\ref{conj_lambda}. In particular, results of~\cite{tao-chowla},~\cite{tao-teravainen-duke} imply Conjecture~\ref{conj_lambda} with the correct logarithmic density for $\lambda(P(n))=v$, provided that $\deg(P)\leq 3$ and that $P$ factors into linear factors over $\mathbb{Q}$. For $k\geq 4$, we may apply~\cite[Proposition 7.1]{tao-teravainen-duke} to deduce that 
$\lambda((n+1)(n+2)\cdots (n+k))$ takes both values $\pm 1$ with positive lower density,
although the density is no longer known to be $1/2$, as that would correspond to proving the 4-point Chowla conjecture. The proof of~\cite[Proposition 7.1]{tao-teravainen-duke} crucially uses that the shifts $1,2,\ldots, k$ in the product  form an arithmetic progression, and therefore the proof of Corollary~\ref{cor_linear} requires a different approach to handle the case of arbitrary products of linear factors. 

\subsection{Connection to squarefree values of polynomials}\label{sub:square}

Let us also note that  Conjecture~\ref{conj_lambda} is related to, and arguably more difficult than, the problem of finding \emph{squarefree} values of polynomials (which boils down to finding solutions to $\mu^2(P(n))=1$). For this problem one can successfully apply sieve theory, unlike for Conjecture~\ref{conj_lambda}.

The problem of squarefree values of polynomials has attracted a lot of attention, see~\cite{erdos-1953}, \cite{hooley-1967},~\cite{booker-browning},~\cite{helfgott-cubic} for works handling polynomials of degree at most three (or products thereof). However, even for $P(x)=x^4+2$ it remains an open problem to show that $P(n)$ takes infinitely many squarefree values. This limits what can be proved towards Conjecture~\ref{conj_lambda} as well using existing techniques, namely with current technology we can only expect to make progress on Conjecture~\ref{conj_lambda} for polynomials that factor over $\mathbb{Q}$ into factors of degree at most $3$ (and already the cubic case appears challenging, as we cannot use sieve theory). See however the work of Granville~\cite{granville-abc} that shows the existence of squarefree values of polynomials under the $ABC$ conjecture. 

\subsection{Some open problems}

Based on the discussion above, Conjecture~\ref{conj_lambda} seems difficult for high degree polynomials; nevertheless, the low degree cases are not completely resolved yet either. In~\cite{borwein-choi-ganguli}, Borewin, Choi and Ganguli proved that, for any irreducible quadratic polynomial $P(x)\in \mathbb{Z}[x]$, if $\lambda(P(n))_{n\geq N_P}$ has at least one sign change, it must have infinitely many of them, where $N_P$ is an explicit quantity. Therefore, given any such $P(x)$, one may check in finite time that $\lambda(P(n))$ does have infinitely many sign changes. However, such an algorithmic approach does not prove in finite time that \emph{every} quadratic polynomial satisfies Conjecture~\ref{conj_lambda}; thus, it seems natural to formulate the following problem.  

\begin{problem}\label{problem_quad}
Show that for every integer $d\neq 0$ we have $\lambda(n^2+d)=-1$ for infinitely many $n\geq 1$.
\end{problem}

Very recently, this problem was solved by Srinivasan~\cite{srinivasan}. See also~\cite{srinivasan2} for some related results. We note that the proof of Theorem~\ref{theo_quadprod} reduces in the end to showing that $\lambda(n^2+1)$ is not eventually periodic, a result that does not seem to follow from the results in~\cite{srinivasan}.

Lastly, for the case of irreducible cubic polynomials (for which we at least understand the simpler function $\mu^2(P(n))$), we formulate the following model case.

\begin{problem}\label{problem_cubic} Show that $\lambda(n^3+2)=-1$ for infinitely many natural numbers $n$.
\end{problem}

We do not make progress on Problems~\ref{problem_quad} and~\ref{problem_cubic} in the present paper.

\subsection{Acknowledgments} The author is grateful to the referee for a careful reading of the paper and for numerous helpful comments and suggestions. The author thanks Alexander Mangerel, Kaisa Matom\"aki, Jori Merikoski and Will Sawin for helpful comments. The author was supported by a Titchmarsh Fellowship  and European Union's Horizon
Europe research and innovation programme under Marie Sk\l{}odowska-Curie grant agreement No
101058904.

\section{Notation}\label{sec:notation}

If $A$ is a finite, nonempty set, and $f\colon  A\to \mathbb{C}$ is a function, we use the averaging notations
\begin{align*}
\mathbb{E}_{a\in A}f(a)\coloneqq \frac{1}{|A|}\sum_{a\in A}f(a),\quad 
\mathbb{E}_{a\in A}^{\log}f(a)\coloneqq \frac{\sum_{n\in A}\frac{f(a)}{a}}{\sum_{a\in A}\frac{1}{a}},   
\end{align*}
with the latter one defined when $A\subset \mathbb{N}$.

We employ the usual Landau and Vinogradov asymptotic notations $O(\cdot),o(\cdot), \ll, \gg , \asymp$. 

For two multiplicative functions $f,g\colon  \mathbb{N}\to \mathbb{D}$ (where $\mathbb{D}$ is the closed unit disc of $\mathbb{C}$), we denote their pretentious distance by
\begin{align*}
\mathbb{D}(f,g;x)^2\coloneqq \Big(\sum_{p\leq x}\frac{1-\Re(f(p)\overline{g(p)})}{p}\Big)^{1/2}.    
\end{align*}
We define  $\mathbb{D}(f,g;y,x)$ similarly, but with the summation over $y\leq p\leq x$. For convenience, we extend all our multiplicative functions as even functions to $\mathbb{Z}$, with $g(0)\coloneqq 1$. 

We let $v_p(n)$ denote the largest integer $k$ such that $p^k\mid n$. The functions $\Lambda, \lambda, \mu, \varphi$ are the usual von Mangoldt, Liouville, M\"obius and Euler totient functions. By $\mathbb{Z}_N$ we denote the cyclic group of order $N$. By $\mu_q$ we denote the set of roots of unity of order $q$. The notation $\|x\|$ stands for the distance from $x$ to the nearest integer(s).

\section{The non-pretentious case}

The proof of Theorem~\ref{theo_multiplicative-nonpret} proceeds in four steps, presented in the next four subsections. 

In step 1, we apply the entropy decrement argument to reduce matters to obtaining a nontrivial bound for the two-dimensional correlation
\begin{align}\label{e168}
 \mathbb{E}_{p\leq D}\overline{g_1\cdots g_k(p)}\mathbb{E}_{n\leq x}g_1(a_1n+ph_1)\cdots g_k(a_kn+ph_k)   
\end{align}
for some large constant $D$. This is easier to analyze than the original correlation.

In step 2, we apply the dense model theorem and  the pseudorandomness of the von Mangoldt function to upper bound~\eqref{e168} with an average of the form
\begin{align}\label{e169a}
\mathbb{E}_{d\leq D'}|\mathbb{E}_{n\leq x}g_1(a_1n+dh_1'+r_1)\cdots g_k(a_kn+dh_k'+r_k)|     
\end{align}
for some large constant $D'$. Note that there is no prime weight in this correlation.

In step 3, we show that~\eqref{e169a} can be close to the maximum possible value only if $g_1$ is close to a polynomial phase; this is a variant of the 99 \% inverse theorem for the Gowers norms. The conclusion after this step is, roughly speaking, that Theorem~\ref{theo_multiplicative-nonpret} holds unless for any fixed $\delta'>0$ for almost all $x$ there exist a polynomial $P_x$ of degree $\leq k-1$ such that
\begin{align}\label{eqq1}
|\mathbb{E}_{x\leq n\leq x+D'}g_1(n)e(-P_x(n))|\geq 1-\delta'.    
\end{align}

In step 4, we use a Fourier-analytic argument to show that any sequence taking values in the $q$th roots of unity cannot correlate too strongly with minor arc polynomial phases, and that (by the Matom\"aki--Radziwi\l{}\l{} theorem) the multiplicative function $g_1$ cannot locally correlate with major arc polynomial phases either. This then contradicts~\eqref{eqq1}.

In this section, we think of $k, a_1,\ldots, a_k,h_1,\ldots, h_k,q\in \mathbb{N}$ as being fixed and allow constants to depend on them. However, all estimates will be uniform over the multiplicative functions $g_i$ satisfying the conditions of Theorem~\ref{theo_multiplicative-nonpret}.

\subsection{Reduction to two-dimensional correlations}

Our goal in this subsection is to reduce the proof of Theorem~\ref{theo_multiplicative-nonpret} to obtaining a nontrivial bound for~\eqref{e168}.

We first need the following (consequence of the) entropy decrement argument of Tao, which is a slight modification of the statements in~\cite{tt-chowla},~\cite{tao-teravainen-duke}.

\begin{lemma}[Entropy decrement argument]\label{le_entropy} Let $a_1,\ldots, a_k$, $h_1,\ldots, h_k\in \mathbb{N}$ be fixed.  Then, uniformly for any multiplicative functions  $g_1,\ldots, g_k\colon  \mathbb{N}\to \mathbb{D}$ and any $2\leq \omega\leq x/2$, $2\leq D\leq (\log \omega)^{1/2}$, we have
\begin{align}\label{e124a}\begin{split}
    &\mathbb{E}_{x/\omega\leq n\leq x}^{\log}g_1(a_1n+h_1)\cdots g_k(a_kn+h_k)\\
    &=\mathbb{E}_{p\leq D}^{\log}\overline{g_1\cdots g_k(p)}\mathbb{E}_{x/\omega\leq n\leq x}^{\log}g_1(a_1n+ph_1)\cdots g_k(a_kn+ph_k) +o_{D\to \infty}(1).
    \end{split}
\end{align}
\end{lemma}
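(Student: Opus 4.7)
My plan is to combine multiplicativity of the $g_i$'s with Tao's entropy decrement argument, closely adapting the treatment from \cite{tt-chowla, tao-teravainen-duke} to allow several multiplicative functions $g_i$ evaluated at several linear forms $a_i n + h_i$ simultaneously.

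The first step exploits multiplicativity. For any prime $p$ and integer $m$ with $p \nmid a_i m + h_i$ for every $i$, one has $g_i(p(a_i m + h_i)) = g_i(p)\, g_i(a_i m + h_i)$, and since $p(a_i m + h_i) = a_i(pm) + p h_i$, this rearranges to the pointwise identity
\[
g_1(a_1 m + h_1)\cdots g_k(a_k m + h_k) = \overline{g_1 \cdots g_k(p)}\; g_1(a_1\cdot pm + ph_1) \cdots g_k(a_k\cdot pm + ph_k)
\]
off an exceptional set of $m$ of density $O_k(1/p)$. Log-averaging over $m$ in the appropriate dilated range so that $n = pm$ sweeps across $[x/\omega, x]$, and then log-averaging over primes $p \leq P$, produces the right-hand side of \eqref{e124a}, except that the inner $n$-average is restricted to $n \equiv 0 \pmod{p}$. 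The exceptional sets contribute at most $O\!\left(\sum_{p \leq P} \frac{1}{p \log p}\right) = o_{P \to \infty}(1)$ after factoring in the prime log-weight.

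The second step is the entropy decrement, whose purpose is to show that, on log-average over $p \leq P$, restricting the inner $n$-average to multiples of $p$ is harmless. Following Tao, one considers the Shannon entropy of the $\mathbb{D}^{kJ}$-valued random vector $(g_i(a_i n + p j h_i))_{i \leq k,\, j \leq J}$, where $n$ is drawn log-uniformly from dyadic subintervals of $[x/\omega, x]$ and $J$ is a suitable auxiliary parameter. Because the $g_i$ take values in a common finite alphabet of roots of unity, these entropies lie in the compact range $[0, O_k(\log P)]$; monotonicity of entropy together with a pigeonhole over the $\gg \log \omega$ nested dyadic scales available inside $[x/\omega, x]$ (which is where the constraint $P \leq \exp((\log \omega)^{1/2})$ enters, ensuring the entropy ceiling is dwarfed by the number of scales) then produces a scale at which further conditioning on $p$-dilations is almost entropy-preserving. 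By Pinsker's inequality, this is equivalent to the distributions of $(g_i(a_i n + p h_i))_i$ along $\{pm : m \in [x/\omega/p, x/p]\}$ and along $[x/\omega, x]$ being close in total variation for a full-measure set of $p \leq P$, which is exactly what is needed to remove the divisibility restriction.

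The main obstacle is the technical bookkeeping in the entropy step, specifically ensuring that the $o_{P \to \infty}(1)$ error is uniform across all multiplicative functions $g_i$ with the stated properties. I expect this to be essentially automatic from the existing machinery in \cite{tt-chowla, tao-teravainen-duke}, since that machinery treats $\prod_i g_i(a_i n + p h_i)$ as a single bounded sequence indexed by $n$ and $p$ and is insensitive to whether one has one or several multiplicative functions and one or several affine forms. Thus the plan reduces to verifying that the multiplicativity input in the first step accommodates products of several $g_i$'s at several linear forms — purely algebraic and already recorded above — and then quoting the entropy transfer from the cited papers with cosmetic modifications.
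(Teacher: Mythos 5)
Your overall route --- multiplicativity to insert the factor $\overline{g_1\cdots g_k(p)}$, dilation invariance of logarithmic averages to land on the average restricted to $p\mid n$, and then Tao's entropy decrement (finite alphabet, pigeonhole over $\gg\log\omega$ dyadic scales, Pinsker) to remove that restriction --- is exactly the paper's, and your identification of where the hypothesis $P\leq\exp((\log\omega)^{1/2})$ enters is correct. However, there is a genuine gap at the very first step: your ``pointwise identity'' is false unless the $g_i(p)$ are unimodular. The lemma is stated uniformly for multiplicative functions $g_i:\mathbb{N}\to\mathbb{D}$ with $\mathbb{D}$ the closed unit disc (and in the application, Theorem \ref{theo_multiplicative-nonpret}, only $g_1$ is assumed to take values in roots of unity). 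Multiplicativity gives $\prod_j g_j(a_j pm+ph_j)=(g_1\cdots g_k)(p)\prod_j g_j(a_jm+h_j)$ off the exceptional set, so multiplying by $\overline{g_1\cdots g_k(p)}$ recovers $|g_1\cdots g_k(p)|^2\prod_j g_j(a_jm+h_j)$, not $\prod_j g_j(a_jm+h_j)$; the discrepancy $1-|g_1\cdots g_k(p)|^2$ is not controlled without an extra hypothesis. For the same reason, your assertion that the entropies are bounded ``because the $g_i$ take values in a common finite alphabet of roots of unity'' does not apply in the stated generality (though that part is repairable by the standard $\varepsilon$-net discretization of the disc).

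The paper closes this gap with a preliminary dichotomy that you would need to add. Either $\sum_p\frac{1-|g_1\cdots g_k(p)|}{p}<\infty$ as in \eqref{e125a}, in which case the factors $|g_1\cdots g_k(p)|^2$ contribute $o_{P\to\infty}(1)$ after the logarithmic average over $p\leq P$ and the rest of your argument goes through; or $\sum_p\frac{1-|g_j(p)|}{p}=\infty$ for some $j$, in which case Shiu's bound \cite{shiu} forces $\mathbb{E}_{n\leq x}|g_j(a_jn+m_j)|=o(1)$ uniformly in the shift, so both sides of \eqref{e124a} are $o(1)$ and the claimed identity is vacuous. This reduction cannot be skipped precisely because the conclusion is required to be uniform over the $g_i$ (one cannot absorb a $g$-dependent rate into the $o_{P\to\infty}(1)$ term). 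A second, smaller, bookkeeping error: the exceptional set where $p\mid a_im+h_i$ for some $i$ has density $O_k(1/p)$, so after the logarithmic prime average its contribution is $O_k\bigl(\bigl(\sum_{p\leq P}p^{-2}\bigr)/\log\log P\bigr)=o_{P\to\infty}(1)$; the expression $\sum_{p\leq P}\frac{1}{p\log p}$ you wrote converges to a nonzero constant and is not itself $o_{P\to\infty}(1)$.
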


\begin{proof}
We first reduce to the case where the $g_i$ are almost unimodular in the sense that\footnote{Note that since the value of $\delta$ in Theorem~\ref{theo_multiplicative-nonpret} is required to be uniform over $g_i$, we do need a short argument to reduce to unimodular functions.}
\begin{align}\label{e125a}
\sum_{p}\frac{1-|g_1\cdots g_k(p)|}{p}<\infty.    
\end{align}
Applying the triangle inequality repeatedly, we see that~\eqref{e125a} follows if we show that
\begin{align}\label{e112}
\sum_{p}\frac{1-|g_j(p)|}{p}<\infty   
\end{align}
for each $1\leq j\leq k$. By the triangle inequality and Shiu's bound~\cite{shiu}, we have uniformly for $1\leq m_j\leq x^{1/2}$ the very crude estimate
\begin{align}\label{e123a}\begin{split}
|\mathbb{E}_{n\leq x}g_1(a_1n+m_1)\cdots g_k(a_kn+m_k)|&\leq  \mathbb{E}_{n\leq x}|g_j(a_jn+m_j)|\\
&\leq a_j \mathbb{E}_{n\leq a_jx}|g_j(n)|+o(1)\\
&\ll \prod_{p\leq a_j x}\left(1-\frac{1-|g_j(p)|}{p}\right)+o(1). 
\end{split}
\end{align}
Thus, if~\eqref{e112} does not hold, then the correlation average on the left of~\eqref{e123a} is $o(1)$, which by partial summation (and taking $m_i=h_i$ or $m_i=ph_i$) implies that both of the logarithmic correlation averages in~\eqref{e124a} are $o(1)$, in which case there is nothing to prove. Hence, we may assume that~\eqref{e125a} holds.

Now, using the triangle inequality together with~\eqref{e125a} and $g_j(p)g_j(a_jn+h_j)=g_j(a_jpn+ph_j)$ for $p\nmid a_jn+h_j$, the claim~\eqref{e124a} reduces to showing that for any $\varepsilon>0$ we have
\begin{align}\label{e163}
|\mathbb{E}_{p\leq D}^{\log}\mathbb{E}_{x/\omega\leq n\leq x}^{\log}&\left(g_1(a_1pn+ph_1)\cdots g_k(a_kpn+ph_k)-g_1(a_1n+ph_1)\cdots g_k(a_kn+ph_k)\right)|\leq \varepsilon    
\end{align}
for $D\geq D_0(\varepsilon)$. Using the dilation invariance property of logarithmic averages, that is,
\begin{align*}
\mathbb{E}_{x/\omega\leq n\leq x}^{\log}a(pn)=\mathbb{E}_{x/\omega\leq n\leq x}^{\log}a(n)p1_{p\mid n}+O\left(\frac{\log p}{\log \omega}\right)    
\end{align*}
for $1$-bounded sequences $a(n)$,~\eqref{e163} reduces to
\begin{align}\label{eq163}
|\mathbb{E}_{p\leq D}^{\log}\mathbb{E}_{x/\omega\leq n\leq x}^{\log}g_1(a_1n+ph_1)\cdots g_k(a_kn+ph_k)(p1_{p\mid n}-1)|\leq \varepsilon/2    
\end{align}
for $D_0(\varepsilon)\leq D\leq (\log \omega)^{1/2}$. But this follows from the entropy decrement argument in~\cite[Section 4 and Theorem 3.1]{tt-chowla} (the argument there is formulated for correlations of $\lambda$, but the same argument works for the correlations of any $1$-bounded multiplicative functions $g_1,\ldots, g_k$, uniformly in the choice of functions; cf.~\cite[Section 3]{tao-teravainen-duke}. Note also that the correlation averages in~\cite{tt-chowla} are logarithmic, but this assumption is not used after reducing matters to two-dimensional correlations). This completes the proof.
\end{proof}

Note that Lemma~\ref{le_entropy} is a statement about logarithmic averages --- and in fact it has to be since one can show that the direct analogue of~\eqref{e124a} with the average $\mathbb{E}_{n\leq x}$ fails (to see this, take $k=1$ and $g_1(n)=n^{it}$ with $t\neq 0$). However, the following simple lemma (with $f_1(n)=g_1(a_1n+h_1)\cdots g_k(a_kn+h_k)$ and $f_2(n)=\mathbb{E}_{p\leq D}^{\log}\overline{g_1\cdots g_k(p)}g_1(a_1n+h_1)\cdots g_k(a_kn+h_k)$) shows that we can still use Lemma~\ref{le_entropy} to estimate the asymptotic averages we are interested in.

\begin{lemma}\label{le_f1f2}
Let $10\leq \omega\leq x/10$. Let $f_1,f_2\colon  \mathbb{N}\to \mathbb{D}$ be any functions. Then 
\begin{align*}
1-\left|\mathbb{E}_{n\leq x}f_1(n)\right|\geq \frac{\log \omega}{2\omega} \left(1-\max_{x'\in [x/\omega,x]}\left|\mathbb{E}_{n\leq x'}f_2(n)\right|-\left|\mathbb{E}_{x/\omega\leq n\leq x}^{\log}f_1(n)-f_2(n)\right|-\frac{8}{\log \omega}\right).   \end{align*}
\end{lemma}

\begin{proof}
Since $|z|=\max_{\phi\in \mathbb{R}}\textnormal{Re}(e(\phi)z)$, for some real number $\theta$ we have
\begin{align*}
1-\left|\mathbb{E}_{n\leq x}f_1(n)\right|=\mathbb{E}_{n\leq x}\textnormal{Re}(1-e(\theta)f_1(n))\geq \frac{1}{x}\sum_{x/\omega\leq n\leq x}\textnormal{Re}(1-e(\theta)f_1(n))-\frac{2}{\omega}.    
\end{align*}
Since $1/x\geq 1/(n\omega)$ for $n\in [x/\omega,x]$ and $\sum_{x/\omega\leq n\leq x}1/n\geq \log \omega-1\geq (\log \omega)/2$, we have
\begin{align*}
\frac{1}{x}\sum_{x/\omega\leq n\leq x}\textnormal{Re}(1-e(\theta)f_1(n))&\geq \frac{\log \omega}{2\omega}\mathbb{E}_{x/\omega\leq n\leq x}^{\log}\textnormal{Re}(1-e(\theta)f_1(n))\\
&\geq \frac{\log \omega}{2\omega}\left(\mathbb{E}_{x/\omega\leq n\leq x}^{\log}\textnormal{Re}(1-e(\theta)f_2(n))-\left|\mathbb{E}_{x/\omega\leq n\leq x}^{\log}f_1(n)-f_2(n)\right|\right).    
\end{align*}
Lastly, note that by the triangle inequality and partial summation
\begin{align*}
\mathbb{E}_{x/\omega\leq n\leq x}^{\log}\textnormal{Re}(1-e(\theta)f_2(n))&\geq 1-\mathbb{E}_{x/\omega\leq n\leq x}^{\log}|f_2(n)|\\
&\geq 1-\max_{x'\in [x/\omega,x]}\frac{1+\log \omega}{\sum_{x/\omega\leq n\leq x}\frac{1}{n}}\mathbb{E}_{n\leq x'}|f_2(n)|\\
&\geq 1-\max_{x'\in [x/\omega,x]}\mathbb{E}_{n\leq x'}|f_2(n)|-\frac{4}{\log \omega}.
\end{align*}

\end{proof}

By Lemmas~\ref{le_entropy} and~\ref{le_f1f2}, if for some $\eta>0$ we prove 
\begin{align}\label{e166}
\limsup_{D\to \infty}\limsup_{x\to \infty}|\mathbb{E}_{p\leq D}\overline{g_1\cdots g_k(p)}\mathbb{E}_{n\leq x}g_1(a_1n+ph_1)\cdots g_k(a_kn+ph_k)|\leq 1-\eta,
\end{align}
 then it follows that
\begin{align*}
|\mathbb{E}_{n\leq x}g_1(a_1n+h_1)\cdots g_k(a_kn+h_k)|\leq 1-\frac{\eta\log \omega}{4\omega}.     
\end{align*}

The expression~\eqref{e166} is now a two-dimensional correlation average, so step 1 of the proof is complete.

\subsection{Removing the prime weight}

We shall now execute step 2 of the proof, reducing matters to obtaining a nontrivial bound for an expression of the form~\eqref{e169a}. We must therefore replace the prime average in~\eqref{e166} with an average over the integers.

\begin{lemma}[Removing the prime weight]\label{le_primeweight} Let $k\geq 1$ and $a_1,\ldots, a_k$, $h_1,\ldots, h_k\in \mathbb{N}$ be fixed with $(a_i,h_i)$ linearly independent over $\mathbb{Q}$. Also let $\varepsilon>0$ and let $D_1$ be large enough in terms of $\varepsilon$. Then there exist $D_2\in[D_1^{1/2},D_1]$, $R=R(\varepsilon)$ and $1\leq h_i'\leq R(\varepsilon)$ with $(a_i,h_i')$ linearly independent over $\mathbb{Q}$ and such that the following holds. For any $g_1,\ldots, g_k\colon  \mathbb{N}\to\mathbb{D}$ and any $x$ large enough, for some $1\leq r_i\leq R(\varepsilon)$ we have
\begin{align}\label{e169}
\mathbb{E}_{p\leq D_1}\Big|\mathbb{E}_{n\leq x}\prod_{j=1}^k g_j(a_jn+ph_j)\Big|\leq \mathbb{E}_{d\leq D_2}\Big|\mathbb{E}_{n\leq x}\prod_{j=1}^k g_j(a_jn+dh_j'+r_j)\Big|+\varepsilon.
\end{align}
\end{lemma}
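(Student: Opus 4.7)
The plan is to combine a W-trick with the Green--Tao linear forms / pseudorandomness estimates for the truncated von Mangoldt function, so as to replace a prime-weighted average by an integer-weighted one.

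Let $W := \prod_{p\leq w} p$ for a parameter $w = w(\varepsilon)$ to be chosen large, and set $P_2 := \lfloor P_1/W\rfloor$, which lies in $[P_1^{1/2}, P_1]$ provided $P_1 \geq W^2$. The Siegel--Walfisz prime number theorem, combined with pigeonholing over the $\varphi(W)$ reduced residues modulo $W$, provides a residue $r$ coprime to $W$ such that
\begin{align*}
\mathbb{E}_{p\leq P_1}\left|\mathbb{E}_{n\leq x}\prod_{j=1}^k g_j(a_jn + ph_j)\right| \leq \mathbb{E}_{p\leq P_1,\, p\equiv r\,(W)}\left|\mathbb{E}_{n\leq x}\prod_{j=1}^k g_j(a_jn + ph_j)\right| + o_{x\to\infty}(1).
\end{align*}
Substituting $p = Wd + r$ converts each shift $ph_j$ into $Wh_j\cdot d + rh_j$, so defining $h_j' := Wh_j$ (which depends only on $\varepsilon$ via $W$) and $r_j := rh_j$ (bounded by $R(\varepsilon) := W\cdot \max_j h_j$), and noting that the pairs $(a_j, h_j')$ inherit linear independence from $(a_j, h_j)$, the problem reduces, after normalization, to showing
\begin{align*}
\mathbb{E}_{d\leq P_2}\, \tilde\Lambda(d)\, |\tilde F(d)| \leq \mathbb{E}_{d\leq P_2}|\tilde F(d)| + \varepsilon/2,
\end{align*}
where $\tilde\Lambda(d) := \tfrac{\varphi(W)}{W}\Lambda(Wd+r)$ and $\tilde F(d) := \mathbb{E}_{n\leq x}\prod_j g_j(a_jn + h_j'd + r_j)$.

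To prove this, introduce phases $\epsilon_d^\star \in \mathbb{D}$ satisfying $|\tilde F(d)| = \Re(\epsilon_d^\star \tilde F(d))$; using the trivial bound $\Re \mathbb{E}_d \epsilon_d^\star \tilde F(d) \leq \mathbb{E}_d |\tilde F(d)|$, it suffices to prove
\begin{align*}
\left|\mathbb{E}_{d\leq P_2,\, n\leq x}\bigl(\tilde\Lambda(d) - 1\bigr)\,\epsilon_d^\star \prod_{j=1}^k g_j(a_jn + h_j'd + r_j)\right| \leq \varepsilon/2.
\end{align*}
After replacing $\Lambda$ by the Goldston--Pintz--Yildirim truncated divisor sum majorant $\Lambda_R$ (at negligible cost), the Green--Tao generalized von Neumann theorem \cite{gt-linear}, applied in the two variables $(n,d)$ to the $k+1$ pairwise non-parallel linear forms $L_0(n,d)=d$ and $L_j(n,d)=a_jn+h_j'd+r_j$, bounds the above correlation by a constant multiple of $\|\tilde\Lambda_R - 1\|_{U^{s(k)}[P_2]}$ for some $s=s(k)$. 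The Green--Tao pseudorandomness of $\tilde\Lambda_R$ then gives $\|\tilde\Lambda_R - 1\|_{U^{s(k)}} = o_{w\to\infty}(1)$, so choosing $w$ large enough in terms of $\varepsilon$ and $k$ closes the argument.

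The main obstacle is that the phase factor $\epsilon_d^\star$ is an arbitrary $1$-bounded function on the same variable $d$ as $\tilde\Lambda - 1$, so it cannot simply be pulled out by a triangle inequality; nevertheless the Green--Tao generalized von Neumann theorem is robust enough to accommodate $\epsilon_d^\star$ as a further bounded input function associated with the extra linear form $L_0(n,d)=d$, which is non-parallel to each $L_j$ since $a_j\neq 0$. This is what lets the final bound depend only on a Gowers norm of $\tilde\Lambda_R - 1$. Verifying the pseudorandomness of the GPY majorant in this two-variable configuration and tracking the dependence of $R(\varepsilon)$ on $\varepsilon$ is standard given the Green--Tao framework, but constitutes the technical bulk of the argument.
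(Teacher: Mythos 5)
Your overall architecture (W-trick, replacing the prime indicator by von Mangoldt, invoking the Green--Tao machinery) matches the paper's, but there is a genuine gap at the decisive step. You propose to bound
\begin{align*}
\Bigl|\mathbb{E}_{d\leq P_2,\, n\leq x}\bigl(\tilde\Lambda_R(d) - 1\bigr)\,\epsilon_d^\star \prod_{j=1}^k g_j(a_jn + h_j'd + r_j)\Bigr|
\end{align*}
by $\|\tilde\Lambda_R-1\|_{U^{s(k)}[P_2]}$ via the generalized von Neumann theorem, treating $\epsilon_d^\star$ as ``a further bounded input function associated with the extra linear form $L_0(n,d)=d$.'' This does not work: the function attached to $L_0$ is then $f_0(d)=(\tilde\Lambda_R(d)-1)\epsilon_d^\star$, and the generalized von Neumann theorem only controls the correlation by $\min_i\|f_i\|_{U^{s}}$ — here either by $\|(\tilde\Lambda_R-1)\epsilon^\star\|_{U^{s}}$ or by a Gowers norm of one of the arbitrary $g_j$'s, neither of which is small. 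In particular, $\|(\tilde\Lambda_R-1)\epsilon^\star\|_{U^{s}}$ is \emph{not} $o_{w\to\infty}(1)$ for adversarial phases: taking $\epsilon_d^\star$ to be the conjugate phase of $\tilde\Lambda_R(d)-1$ gives $f_0=|\tilde\Lambda_R-1|$, whose mean (hence $U^s$ norm) is $\gg 1$. You cannot attach $\epsilon^\star$ to a second copy of the form $d$ either, since parallel forms are excluded from the von Neumann hypothesis (and the conclusion genuinely fails for them). So the step you yourself flag as the ``main obstacle'' is not resolved by your argument.

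The paper's resolution is the dense model (transference) theorem, \cite[Proposition 10.3]{gt-linear}: since $d\mapsto \tfrac{\varphi(W)}{W}\Lambda(Wd+b)c(Wd+b)$ is majorized in absolute value by the pseudorandom measure $\tfrac{\varphi(W)}{W}\Lambda(Wd+b)$, one may decompose it as $c_{b,W}(d)+E_{b,W}(d)$ with $|c_{b,W}|\leq 1$ and $\|E_{b,W}\|_{U^k}=o_{w\to\infty}(1)$. Only the structureless error $E_{b,W}$ is fed into the generalized von Neumann theorem; the bounded part $c_{b,W}$ (which need bear no simple relation to your $\epsilon_d^\star$) is then discarded by the triangle inequality, which is exactly how the absolute values reappear in \eqref{e169}. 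If you insert this transference step in place of your direct application of von Neumann, the rest of your argument (the pigeonholing over residues mod $W$ and the bookkeeping of $h_j'$, $r_j$, $R(\varepsilon)$) goes through essentially as in the paper.
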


\begin{proof}
In what follows, let 
\begin{align*}
D\gg w\gg \varepsilon^{-1}\gg 1    
\end{align*}
with each parameter large enough in terms of the ones to the right of it.

Consider the average
\begin{align}\label{e171}
 \mathbb{E}_{p\leq D}c(p)\mathbb{E}_{n\leq x}\prod_{j=1}^k g_j(a_jn+ph_j)  
\end{align}
for $x$ large enough in terms of $D$ and for any function $c\colon  \mathbb{N}\to \mathbb{D}$ (which may depend on all the parameters). Since the von Mangoldt function and the indicator function of the primes are interchangeable in the sense that
\begin{align*}
\mathbb{E}_{d\leq y}|\Lambda(d)-(\log y)1_{\mathbb{P}}(d)|=o_{y\to \infty}(1),    
\end{align*} 
we can write~\eqref{e171} as
\begin{align}\label{e167}
\mathbb{E}_{d\leq D}\Lambda(d)c(d)\mathbb{E}_{n\leq x}g_1(a_1n+dh_1)\cdots g_k(a_kn+dh_k)+o_{D\to \infty}(1). 
\end{align}
To analyze this further, we introduce the $W$-trick. Let $W=\prod_{p\leq w}p$, and recall that $D$ is large in terms of $w$. By splitting the $d$ average in~\eqref{e167} into progressions modulo $W$, we can write it as
\begin{align}\label{e172}
\mathbb{E}_{1\leq b\leq W,(b,W)=1}\mathbb{E}_{d'\leq D/W}\Lambda(Wd'+b)c(Wd'+b)\mathbb{E}_{n\leq x}\prod_{j=1}^k g_j(a_jn+(Wd'+b)h_j)+o_{w\to \infty}(1).
\end{align}
Now, since $d\mapsto \frac{\varphi(W)}{W}\Lambda(Wd+b)c(Wd+b)$ is majorized by the pseudorandom measure\footnote{Strictly speaking, the measure becomes pseudorandom in the sense of~\cite[Section 6]{gt-linear} only if $w$ is a function of $x$ tending to infinity; however, as $w$ can be taken to be an arbitrarily large absolute constant, everything works similarly.} $\frac{\varphi(W)}{W}\Lambda(Wn+b)$, by the dense model theorem~\cite[Proposition 10.3]{gt-linear}, we may write 
\begin{align*}
\frac{\varphi(W)}{W}\Lambda(Wd+b)c(Wd+b)=c_{b,W}(d)+E_{b,W}(d), \end{align*}
where $|c_{b,W}(d)|\leq 1$ and $\|E_{b,W}(d)\|_{U^k[D/W]}=o_{w\to \infty}(1)$. Splitting the $n$ average in~\eqref{e172} into short intervals of length $D/W$ and applying the generalized von Neumann theorem~\cite[Proposition 7.1]{gt-linear}, we see that the contribution of $E_{b,W}(d)$ is negligible, so~\eqref{e172} can be written as
\begin{align}\label{e173}
\mathbb{E}_{1\leq b\leq W,(b,W)=1}\mathbb{E}_{d'\leq D/W}c_{b,W}(d')\mathbb{E}_{n\leq x}\prod_{j=1}^k g_j(a_jn+(Wd'+b)h_j)+o_{w\to \infty}(1).    
\end{align}
Since $\varepsilon^{-1}$ grows slowly in terms of $w$, we may assume that the $o_{w\to \infty}(1)$ term above is bounded by $\varepsilon$ in modulus for $D/W$ large enough in terms of $\varepsilon$. 
Now, if $R(\varepsilon)$ is large enough in terms of $\varepsilon$, then for $h_{j,W}\coloneqq Wh_j$ we have $1\leq h_{j,W}\leq R(\varepsilon)$ and the vectors $(a_i,h_{i,W})$ are linearly independent over $\mathbb{Q}$. Therefore, each of the inner averages in~\eqref{e173} is 
\begin{align*}
\leq  \mathbb{E}_{d\leq D/W}\Big|\mathbb{E}_{n\leq x}\prod_{j=1}^k g_j(a_jn+dh_{j,W}+bh_j)\Big|.   
\end{align*}
Note finally that $D/W\geq D^{1/2}$ since $w$ is growing slowly enough in terms of $D$. The claim now follows by averaging over $b$ and applying the pigeonhole principle.
\end{proof}

We now conclude that the proof of Theorem~\ref{theo_multiplicative-nonpret} reduces to the following. Let $a_i,q\in \mathbb{N}$ be fixed.  Then for some constant $\delta'>0$ (depending only on the above parameters) the following holds. Whenever $h_i\in \mathbb{N}$ satisfy $a_ih_j\neq a_jh_i$ for $i\neq j$, for any $r_i\in \mathbb{N}$ we have the bound
\begin{align}\label{e176}
\limsup_{D'\to \infty}\limsup_{x\to \infty}\mathbb{E}_{d\leq D'}|\mathbb{E}_{n\leq x} g_1(a_1n+dh_1+r_1)\cdots g_k(a_kn+dh_k+r_k)|\leq 1-\delta'.  \end{align}

We shall show that~\eqref{e176} in turn follows from the following two propositions.

\begin{proposition}[A 99\% inverse theorem for averages along arithmetic progressions] \label{prop_99} Let $N$ be a prime, and let $f_1,\ldots, f_k\colon  \mathbb{Z}_N\to \mathbb{D}$ be any functions. Let $h_1,\ldots, h_k\in \mathbb{N}$ be distinct and fixed, let $\eta>\varepsilon>0$, and suppose that
\begin{align}\label{e143}
\mathbb{E}_{d\in \mathbb{Z}_N, 0\leq d\leq \varepsilon N}|\mathbb{E}_{n\in \mathbb{Z}_N}f_1(n+dh_1)\cdots f_k(n+dh_k)|\geq 1-\eta.    
\end{align}
Then, there exist some constants $c_{\varepsilon,\eta}$, $\gamma(k)>0$ such that for any  partition $\mathcal{I}$ of $\mathbb{Z}_N$ into intervals\footnote{We define an interval $[A,B]\subset \mathbb{Z}_N$ with $A,B\in \mathbb{R}$ as the image of $[\lceil A\rceil, \lfloor B \rfloor]\cap \mathbb{Z}$ under this canonical projection $x\mapsto x\pmod N$ from $\mathbb{Z}\to \mathbb{Z}_N$.} of length $\in [c_{\varepsilon, \eta} N/2,c_{\varepsilon,\eta}N]$ we have
\begin{align}\label{e142}
\frac{1}{|\mathcal{I}|}\sum_{I\in \mathcal{I}}\sup_{\substack{P(Y)\in \frac{1}{N}\mathbb{Z}[Y]\\\deg{P}\leq k-1}}|\mathbb{E}_{n\in I}f_1(n)e(-P(n))|\geq 1-2\eta^{\gamma(k)}.    
\end{align}
\end{proposition}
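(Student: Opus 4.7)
The plan is to reduce the hypothesis to a lower bound on a (localised) Gowers $U^k$-type norm of $f_1$ via iterated Cauchy--Schwarz, invoke a local version of the $99\%$ inverse theorem for Gowers norms of Eisner--Tao \cite{eisner-tao} to produce a polynomial phase approximation of degree $\leq k - 1$, and transfer this approximation into the stated local correlation bound.

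\textbf{Step 1 (generalised von Neumann, $99\%$ form).} Insert a phase $c : \mathbb{Z}_N \to \mathbb{D}$ supported on $[0, \varepsilon N]$ so that the hypothesis becomes
$$\mathbb{E}_{d \in [0, \varepsilon N]}\, c(d)\, \mathbb{E}_{n \in \mathbb{Z}_N} \prod_{j=1}^k f_j(n + dh_j)\; \geq\; 1 - \eta.$$
Apply Cauchy--Schwarz in $n$ exactly $k - 1$ times: before each application change variables $n \mapsto n - dh_j$ so as to isolate one of the factors $f_j$ ($j = k, k-1, \ldots, 2$), then discard it using $|f_j| \leq 1$. Each expansion of $|\cdot|^2$ introduces a new difference parameter $u_i$ localised to a range of size $O(\varepsilon N)$ together with a weight of the form $c(d)\overline{c(d')}$ of modulus at most $1$. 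After $k - 1$ rounds, a careful accounting of the weights produces a bound of the shape
$$(1 - \eta)^{2^{k-1}}\; \leq\; \|f_1\|_{\widetilde{U}^{k}}^{2^{k}},$$
where $\|\cdot\|_{\widetilde{U}^{k}}$ is an appropriate local Gowers $U^k$-type norm with shift parameters restricted to intervals of length $\asymp \varepsilon N$; this yields $\|f_1\|_{\widetilde{U}^k} \geq 1 - O_k(\eta)$. The delicate point is that the iteration must retain the indicator $1_{[0, \varepsilon N]}$ through each round so as to produce a genuinely \emph{local} Gowers norm; discarding the indicator would lose factors of $\varepsilon$ at each step and collapse the $99\%$ regime.

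\textbf{Step 2 ($99\%$ Gowers inverse theorem).} By a local version of the Eisner--Tao $99\%$ inverse theorem \cite[Theorem~1.2]{eisner-tao}, for some $\gamma(k) > 0$ and an appropriate constant $c_{\varepsilon, \eta}$, on all but an $O_k(\eta^{\gamma(k)})$-fraction of intervals $I \in \mathcal{I}$ of length in $[c_{\varepsilon,\eta} N/2, c_{\varepsilon,\eta} N]$ there is a polynomial $P_I(Y) \in \mathbb{Z}[Y]$ of degree $\leq k-1$ with
$$\mathbb{E}_{n \in I}\, |f_1(n) - e(P_I(n)/N)|^2\; \leq\; O_k(\eta^{2\gamma(k)}).$$
The local inverse statement can itself be derived from the global Eisner--Tao theorem by partitioning $\mathbb{Z}_N$ into intervals of length $\asymp \varepsilon N$ and applying the global theorem on each interval viewed as a cyclic group, with $c_{\varepsilon,\eta}$ a small constant multiple of $\varepsilon$.

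\textbf{Step 3 (transfer to intervals).} For each non-exceptional $I$, Cauchy--Schwarz and the triangle inequality give
$$\bigl| \mathbb{E}_{n \in I} f_1(n) e(-P_I(n)/N) \bigr|\; \geq\; 1 - \mathbb{E}_{n \in I}\, |f_1(n) - e(P_I(n)/N)|\; \geq\; 1 - O_k(\eta^{\gamma(k)}),$$
while on exceptional intervals we use the trivial lower bound $0$. Averaging, and replacing $P_I$ by the supremum over all $Q$ of degree $\leq k - 1$,
$$\frac{1}{|\mathcal{I}|} \sum_{I \in \mathcal{I}} \sup_{\substack{Q(Y) \in \mathbb{Z}[Y] \\ \deg Q \leq k - 1}} \bigl| \mathbb{E}_{n \in I} f_1(n) e(-Q(n)/N) \bigr|\; \geq\; 1 - O_k(\eta^{\gamma(k)}),$$
which yields the stated bound $\geq 1 - 2\eta^{\gamma(k)}$ after adjusting constants. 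The main obstacle throughout is to synchronise the scale $\varepsilon N$ arising from the localised shifts of Step~1 with the interval length $c_{\varepsilon,\eta}N$ appearing in the conclusion, and to keep the loss in $\eta$ polynomial (rather than exponential) at each stage.
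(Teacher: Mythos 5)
Your proposal follows exactly the route that the paper explicitly rules out in the remarks after the statement of the proposition: reducing \eqref{e143} to a Gowers-norm lower bound via generalized von Neumann and then invoking the Eisner--Tao $99\%$ inverse theorem. There are two genuine gaps. First, in Step~1 the "careful accounting of the weights" is precisely where the argument breaks: because the $d$-average is over a set of density $\varepsilon$ while the $n$-average is global, each Cauchy--Schwarz either discards the indicator $1_{[0,\varepsilon N]}$ (losing a factor of $\varepsilon$, fatal in the $99\%$ regime) or retains it as a non-normalized weight, in which case the object you arrive at is a weighted local Gowers-type expression with base point $n$ ranging over all of $\mathbb{Z}_N$ and shifts in $[-\varepsilon N,\varepsilon N]^{k-1}$. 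You assert that such an object is controlled by an average over intervals $I$ of the $U^{k}$-norms of $f_1|_I$ (each interval "viewed as a cyclic group"), but this comparison is only known up to multiplicative constants, and a constant-factor loss again destroys the bound $1-O_k(\eta)$. You name this as "the delicate point" but do not resolve it; it is the reason the paper does not use this reduction. Second, even granting Step~1, your Step~2 attributes to \cite[Theorem~1.2]{eisner-tao} an $L^2$-approximation error of size $O_k(\eta^{2\gamma(k)})$. The Eisner--Tao theorem gives only an unspecified $o_{\eta\to 0}(1)$ rate, so your argument cannot produce the polynomial bound $1-2\eta^{\gamma(k)}$ that the proposition asserts (and which is what makes the proposition quantitatively stronger than a direct appeal to \cite{eisner-tao}).

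For contrast, the paper's proof avoids Gowers norms entirely: after reducing to unimodular $f_j=e(F_j)$, it converts \eqref{e143} into an $L^1$ smallness statement for $\|F_1(L_1(\bm{x}))+\cdots+F_k(L_k(\bm{x}))-\alpha\|$ under a change of variables in which each linear form $L_j$ is independent of the $j$th coordinate, then isolates $F_1$ by discrete differentiation and recovers a local polynomial structure by an elementary "integration" lemma (a function whose discrete derivatives are small in $L^1$ is nearly constant on intervals). Every step there loses only a fixed power of $\eta$, which is how the polynomial exponent $\gamma(k)$ is obtained. If you want to salvage your approach, you would need both a genuinely local generalized von Neumann inequality with no constant loss and a polynomially quantitative local $99\%$ inverse theorem; neither is available off the shelf.
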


\textbf{Remarks.}

\begin{itemize}
\item Note that we want the $d$ average in Proposition~\ref{prop_99} to be much shorter than the $n$ average so that we can pass between cyclic groups and the integers without losing nontrivial factors. However, the fact that the $d$ average is shorter means that if we wanted to upper bound~\eqref{e143} with the Gowers norm $\|f_1\|_{U^{k-1}(\mathbb{Z}_N)}$ using the generalized von Neumann theorem, we would lose a vital constant factor. For this reason we have not been able to reduce Proposition~\ref{prop_99} directly to the "99\% inverse theorem" of Eisner and Tao~\cite{eisner-tao}. +Moreover, the bound in~\eqref{e142} is polynomial in $\eta$, whereas the corresponding bound in~\cite{eisner-tao} involves an unspecified function $1-o_{\eta \to 0}(1)$. 

\item We note that the primality assumption on $N$ is needed, since if $r\mid N$ and $r=O(1)$ is suitably chosen, there exist $r$-periodic unimodular functions on $\mathbb{Z}_N$ which are not of the form $e(P(n))$, which would be an issue in the case $h_j=jr$.

\item The reason we need to localize in~\eqref{e142} into subintervals of $\mathbb{Z}_N$ is that slowly-varying functions such as $f(n)=e(\log(1+(n\pmod N)/N))$ (which is a function on $\mathbb{Z}_N$) satisfy $f(n+d)=f(n)+O(\eta)$ whenever $n\leq N-\eta N$ and  $d\leq \eta N$, despite not being globally close to a polynomial phase on $\mathbb{Z}_N$. This is a phenomenon that does not arise in the usual 99\% inverse theorem for the Gowers norms.
\end{itemize}

\begin{proposition}[Discrete-valued multiplicative functions do not strongly correlate with locally polynomial phases.]\label{prop_polyphase} Let $q\geq 2$, $k\geq 1$ be fixed. Let $g\colon  \mathbb{N}\to \mu_q$ be multiplicative, where $\mu_q$ is the set of $q$th roots of unity. Suppose that $g$ satisfies the non-pretentiousness assumption $\mathbb{D}(g,\chi;\infty)=\infty$ for every Dirichlet character $\chi$. Then there exists $\delta=\delta_{q}>0$ such that, whenever $X\gg H\gg Q\geq 1$ and each parameter is large enough in terms of the ones to the right of it, we have
\begin{align*}
\sup_{\substack{P(Y)\in \mathbb{R}[Y]\\\deg{P}\leq k}}\,\sup_{\substack{I\subset [x,x+H]\\|I|\geq H/Q}}\,\sup_{a,r\leq Q}|\mathbb{E}_{n\in I,n\equiv a\pmod r}g(n)e(-P(n))|\leq 1-\delta
\end{align*}
for all but at most $X/\log \log H$ values of $x\in [1,X]\cap \mathbb{Z}$.
\end{proposition}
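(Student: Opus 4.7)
The plan is to show that if the supremum in the proposition exceeds $1-\delta$ at a given $x$, then $g$ must correlate on a subinterval of $[x,x+H]$ of length $\gg H/Q^{O_k(1)}$ with a periodic function of conductor bounded in terms of $q,k,Q$; this contradicts the quantitative short-interval Matom\"aki--Radziwi\l{}\l{} theorem combined with the non-pretentiousness of $g$, outside the claimed exceptional set of size $O(X/\log\log H)$.

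First I would kill $g$ by raising to the $q$-th power. Suppose $|\E_{n\in I,\,n\equiv a\,(r)}\,g(n)e(-P(n))|\geq 1-\delta$; by near-equality in the triangle inequality, the integrand agrees with its mean $c$ up to $O(\sqrt\delta)$ on a $(1-O(\sqrt\delta))$-fraction of the progression, and using $g^q\equiv 1$ we obtain $|\E_{n\in I,\,n\equiv a\,(r)}\,e(-qP(n))|\geq 1-O(\sqrt\delta)$. After substituting $n=a+rm$, this is an exponential sum $|\E_m\,e(\tilde Q(m))|\geq 1-O(\sqrt\delta)$ for a real polynomial $\tilde Q$ of degree $\leq k$ on an $m$-interval of length $M\geq H/Q^2$. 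A quantitative $99\%$ Weyl inverse theorem---proved by iterated van der Corput differencing down to linear phases, combining rational approximations at several shifts, and using that Gauss sums with denominator $\geq 2$ cannot enter the $99\%$ regime---shows that each coefficient $\beta_j$ of $\tilde Q$ lies within $\delta^{c(k)} M^{-j}$ of a rational with denominator $\leq D_0(k)$. Unwinding in $P$, the coefficients of $qP$ are within $\delta^{c(k)} H^{-j}$ of rationals with common denominator $\leq D_0(k)\cdot Q^k$.

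Next I would factor out the arithmetic part and apply Matom\"aki--Radziwi\l{}\l{}. Write $e(-P(n)) = \chi(n)\,e(-R(n))$ on $[x,x+H]$, with $\chi(n):=e(-\tilde P(n)/q)$ periodic of conductor $Q_0\leq q\,D_0(k)\,Q^k$, valued in a fixed finite set of roots of unity, and $R$ a polynomial whose $j$-th coefficient has size $\delta^{c(k)}H^{-j}$, so that $e(-R(n))$ varies by $o(1)$ on every subinterval of length $H/Q_1$ for a suitable $Q_1=Q_1(\delta,k)$. Partitioning $I\cap(a+r\Z)$ into pieces of length $\geq H/(QQ_1)$ and pigeonholing produces a subinterval $J$ on which $|\E_{n\in J,\,n\equiv a\,(r)}\,g(n)\overline{\chi(n)}|\geq 1-O(\sqrt\delta)$. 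For each fixed triple $(\chi,a,r)$, the twisted function $g\overline\chi\,\mathbf{1}_{n\equiv a\,(r)}$ remains non-pretentious with respect to every Archimedean twist $n^{it}$ (the case $t\neq 0$ is excluded because pretentiousness would force $g^q\equiv 1$ to correlate with $n^{iqt}$, impossible; the case $t=0$ is the hypothesis). A quantitative short-interval Matom\"aki--Radziwi\l{}\l{} theorem therefore gives $|\E_{n\in J,\,n\equiv a(r)}\,g\overline\chi|=o(1)$ for all but $O(X/\log\log H)$ values of $x\in[1,X]$; a union bound over the $O_{q,k,Q}(1)$ possible triples completes the contradiction once $\delta$ is chosen small enough in terms of $q$ and $k$.

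The principal obstacle I foresee is the quantitative $99\%$ Weyl inverse in Step 2 with \emph{polynomial} dependence $\delta^{c(k)}$ on the closeness parameter: a purely qualitative rate $1-o_{\delta\to 0}(1)$ such as would come from the Eisner--Tao Gowers-norm $99\%$ inverse theorem is insufficient, since $\delta$ must ultimately be an absolute constant depending on $q$ and $k$ only. A secondary technical issue is that the conductor $Q_0$ of the extracted character grows polynomially in $Q$, so the union bound requires the Matom\"aki--Radziwi\l{}\l{} exceptional set to beat any power of $Q$; since the short-interval MR theorem saves a logarithmic factor as $H\to\infty$ while $Q$ is fixed first, this is comfortably satisfied and yields the stated $O(X/\log\log H)$ exceptional set.
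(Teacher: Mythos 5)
Your argument is essentially correct, but it handles the crucial ``minor arc'' part of the proposition by a genuinely different mechanism than the paper. The paper splits $x$ a priori into major- and minor-arc classes according to the Diophantine type of the maximizing polynomial $P_x$; in the minor-arc case it never tries to show that large correlation is impossible for Diophantine reasons, but instead proves the explicit bound $|\mathbb{E}\, g(Qn+a)e(P_x(n))|\leq \frac{q}{\pi}\sin(\pi/q)+o(1)$ directly, via the Erd\H{o}s--Tur\'an equidistribution of $P_x(n)\bmod 1$, a rearrangement inequality for exponential sums over the level sets of $g$, and Jensen's inequality applied to the concave function $\sin(\pi y)$. You instead exploit the discreteness of $g$ by raising to the $q$th power: near-equality in the triangle inequality gives $g(n)e(-P(n))=c+O(\delta^{1/4})$ off a small exceptional set, whence $|\mathbb{E}\, e(-qP(n))|\geq 1-O_q(\delta^{1/4})$, and a quantitative $99\%$ Weyl inverse theorem then forces $P$ into the major arcs, where both proofs conclude identically via Dirichlet-character expansion, exclusion of Archimedean twists using $g^q\equiv 1$, and the Matom\"aki--Radziwi\l{}\l{} theorem with a union bound over boundedly many characters and windows. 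Your route buys a cleaner conceptual reduction (everything collapses to the major-arc case), at the cost of two things you should be aware of. First, the polynomial-rate $99\%$ Weyl inverse with \emph{bounded} denominators is asserted rather than proved; it is true and your sketch (iterated differencing, consecutive shifts in the dense good set to collapse denominators to divisors of $k!$, then a large shift to upgrade $\|k!\beta\|\ll \epsilon^{c}M^{-1}$ to $\ll\epsilon^{c}M^{-k}$) is the right one, but this lemma carries real work and must be written out, since the qualitative Eisner--Tao-type rate is, as you note, useless here. Second, the admissible $\delta$ in your argument is constrained by the threshold at which the Weyl inverse theorem applies and therefore depends on $k$ as well as $q$, whereas the proposition as stated (and as proved in the paper, where the constant is the $k$-independent quantity $1-\frac{q}{\pi}\sin(\pi/q)$ up to a factor) gives $\delta=\delta_q$; this weakening is harmless for the application to Theorem~\ref{theo_multiplicative-nonpret}, where the final constant is allowed to depend on $k$, but you should state your conclusion as $\delta=\delta_{q,k}$.
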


\begin{proof}[Proof of Theorem~\ref{theo_multiplicative-nonpret} assuming Propositions~\ref{prop_99} and~\ref{prop_polyphase}] 

Recall that it suffices to prove~\eqref{e176}. By denoting $A=a_1\cdots a_k$ and splitting the $d$ average in~\eqref{e176} into progressions modulo $A$, and applying the pigeonhole principle, we can bound the left-hand side of~\eqref{e176} by
\begin{align}\label{e180}
\mathbb{E}_{d\leq D'/A}\Big|\mathbb{E}_{n\leq x}\prod_{j=1}^k g_j\big(a_j(n+d(A/a_j)\cdot h_j)+r_j'\big)\Big|+o_{D'\to \infty}(1)     
\end{align}
for some $1\leq r_j'\leq r_j+Ah_j$. Let $\varepsilon>0$ be a small constant such that $D'$ is large in terms of $\varepsilon^{-1}$. Denote $D=\varepsilon^{-1}D'/A$. Introduce the functions
\begin{align*}
g_{j;\ell,x}(n)\coloneqq g_j(\ell(n+\lfloor x\rfloor)+r_j').
\end{align*}
Denote $h_j'=A/a_j\cdot h_j$. We split the $n$ average in~\eqref{e180} into intervals of length $D$, thus bounding~\eqref{e180} by
\begin{align}\label{e139}
\frac{1}{X}\int_{1}^{X}\mathbb{E}_{d\leq \varepsilon D}\Big|\mathbb{E}_{n\leq D}\prod_{j=1}^k  g_{j;a_j,x'}(n+dh_j')\Big|\, dx'+o_{D'\to \infty}(1).
\end{align}

Next, we shall embed the average in~\eqref{e139} into a cyclic group. Let $M=\max_i h_i'$, let $\varepsilon>0$ be small in terms of $M$, and let $N\in [(1+\varepsilon M)D,(1+2\varepsilon M)D]$ be a prime; such a prime exists by the prime number theorem as long as $D$ is large enough in terms of $\varepsilon$. We define the functions $f_j=f_{j,x'}\colon  \mathbb{Z}_N\to \mathbb{D}$ by $f_j(n)= g_{j;a_j,x'}(n)$ for $0\leq n<N$, and extend them to be $N$-periodic (and thus they can be interpreted as functions on $\mathbb{Z}_N$). Since $\varepsilon M$ is small, wraparound issues are negligible, so we can bound the double average in~\eqref{e139} with
\begin{align}\label{e181}
\mathbb{E}_{d\in \mathbb{Z}_N, 0\leq d\leq \varepsilon N}|\mathbb{E}_{n\in \mathbb{Z}_N}f_{1,x'}(n+dh_1')\cdots f_{k,x'}(n+dh_k')|+o_{\varepsilon\to \infty}(1).    
\end{align}
We are now in a position to apply Proposition~\ref{prop_99}. Since $\mathbb{D}(g_1,\chi;\infty)=\infty$ for all $\chi$ and since $g_1$ takes values in $\mu_q$, there exists some $\delta_0=\delta_{0,q}>0$ such that whenever $\eta>0$ and $N\geq N_0(\eta,k)$ we have
\begin{align}\label{e177}
\sup_{\substack{P(Y)\in \mathbb{R}[Y]\\\deg{P}\leq k}}\sup_{\substack{I\subset [1,N]\\|I|\geq \eta N}} |\mathbb{E}_{n\in I}\,  g_{1;a_j,x'}(n)e(-P(n))|\leq 1-\delta_0   
\end{align}
for all $x'\leq X$, apart from $\leq \eta X$ exceptions. 

But now Proposition~\ref{prop_99} shows that~\eqref{e181} is $\leq 1-2\delta'$ for some constant $\delta'>0$ depending only on $\delta_0$, unless $x'\leq X$ belongs to an exceptional set of measure $\leq \eta X$. Taking $\eta>0$ small in terms of $\delta'$ and recalling that~\eqref{e139} is upper bounded by an average over $x'$ of expressions of the form~\eqref{e181}, this gives the desired bound for~\eqref{e176}.
\end{proof}

\subsection{A 99\% inverse theorem}\label{subsec:inverse}

We will next prove Proposition~\ref{prop_99}; this corresponds to step 3 of the proof of Theorem~\ref{theo_multiplicative-nonpret}. The proof will be based on a favorable change of coordinates and repeated use of discrete differentiation and integration. 

\begin{proof}[Proof of Proposition~\ref{prop_99}]
In what follows, we allow the implied constants in any estimates depend on $k,h_1,\ldots, h_k$. We may assume that $N$ is large enough in terms of $\varepsilon,\eta $, since otherwise (by choosing the function $c_{\varepsilon,\eta}$ appropriately) we have $c_{\varepsilon,\eta}N<1$. We may also assume that $\gamma(k)>0$ is small enough and that $\eta>0$ is small in terms of $k$.\\

\textbf{Reduction to unimodular functions.} We first reduce to the case where the $f_i$ are unimodular. Suppose that the unimodular case has been proved. If there exists $i$ for which
\begin{align}\label{eq6}
|\{m\in \mathbb{Z}_N\colon  |f_i(m)|\leq 1-\eta^{1/2}\}|\geq 2\eta^{1/2}N,    
\end{align}
then the left-hand side of~\eqref{e143} is trivially $<1-\eta$ in absolute value (since the number of pairs $(n,d)$ satisfying $m=n+h_id$ with $n\in \mathbb{Z}_N$ and $0\leq d\leq \varepsilon N$ is $\varepsilon N+O(1)$). Therefore, we may assume that~\eqref{eq6} fails for every $i$. Then we may write $f_i(n)=\tilde{f_i}(n)+O(\eta^{1/2})+E(n)$, where\footnote{For $z=0$, we can define $z/|z|$ arbitrarily, say as $1$.} $\tilde{f_i}(n)=f_i(n)/|f_i(n)|$ is unimodular and $|E(n)|\leq 2$, and $E(n)=0$ for all but $\ll \eta^{1/2} N$ values of $n\in \mathbb{Z}_N$. Substituting this formula for $f_i(n)$ to~\eqref{eq6} for each $i$, we see that the left-hand side of~\eqref{e143} is 
\begin{align*}
\mathbb{E}_{d\in \mathbb{Z}_N,d\leq \varepsilon N}| \mathbb{E}_{n\in \mathbb{Z}_N}\tilde{f_1}(n+h_1d)\cdots \tilde{f_k}(n+h_kd)|+O(\eta^{1/2}),   
\end{align*}
with the $\tilde{f_i}$ being unimodular. Now, by the unimodular case (with $C_0\eta^{1/2}$ in place of $\eta$ for some large constant $C_0$), we deduce that 
\begin{align}\label{e141}
\frac{1}{|\mathcal{I}|}\sum_{I\in \mathcal{I}}\sup_{\substack{P(Y)\in \mathbb{Z}[Y]\\\deg{P}\leq k-1}}|\mathbb{E}_{n\in I}\tilde{f_1}(n)e(-P(n)/N)|\geq 1-2\eta^{\gamma(k)}.    
\end{align}
for some partition $\mathcal{I}$ of $\mathbb{Z}_N$ into intervals $I\subset \mathbb{Z}_N$ of length $\in [c_{\varepsilon,C_0\eta^{1/2}}N/2,c_{\varepsilon,C_0\eta^{1/2}}N]$. But by the decomposition of $\tilde{f_1}$,~\eqref{e141} implies~\eqref{e142} (with the value of $\gamma(k)$ in the latter adjusted). 

We may henceforth assume that the $f_j$ are all unimodular. Let us therefore write 
\begin{align}\label{e183}
f_j(n)= e(F_j(n)),\quad F_j\colon  \mathbb{Z}_N\to \mathbb{R}/\mathbb{Z}.   
\end{align}
Let us also note that, for some unimodular $\sigma(d)$ we may write~\eqref{e143} as
\begin{align}\label{e143a}
\mathbb{E}_{d\in \mathbb{Z}_N, 0\leq d\leq \varepsilon N}\sigma(d)\mathbb{E}_{n\in \mathbb{Z}_N}f_1(n+dh_1)\cdots f_k(n+dh_k)\geq 1-\eta. \end{align}
The rest of the proof proceeds in two steps: handling the case $\sigma(d)=1$ and generalizing to arbitrary $\sigma(d)$.\\

\textbf{The case $\sigma(d)\equiv 1$.} Suppose that~\eqref{e143a} holds with $\sigma(d)\equiv 1$. In this case, we will show the slightly stronger conclusion
\begin{align}\label{e141a}
\frac{1}{|\mathcal{I}|}\sum_{I\in \mathcal{I}}\sup_{\substack{P(Y)\in \mathbb{Z}[Y]\\\deg{P}\leq k-2}}|\mathbb{E}_{n\in I}f_1(n)e(-P(n)/N)|\geq 1-2\eta^{\gamma(k-1)}.    
\end{align}
where $k$ has been reduced by one. 

Substituting~\eqref{e183} to~\eqref{e143a} and applying Markov's inequality and the Taylor approximation of the exponential function, the assumption~\eqref{e143a} implies that
\begin{align}\label{e152}
 \mathbb{E}_{d\in \mathbb{Z}_N, 0\leq d\leq \varepsilon N}\mathbb{E}_{n\in \mathbb{Z}_N}\|F_1(n+dh_1)+\cdots +F_k(n+dh_k)-\alpha\|\leq 10\eta^{1/2}.    \end{align}
for some $\alpha \in \mathbb{R}.$

We then make the change of variables
\begin{align*}
n=\sum_{i\leq k}h_ix_i,\quad d=-(x_1+\cdots +x_k).
\end{align*}
Since $N$ is a large prime, we easily see that every pair $(n,d)$ corresponds to essentially the same number $(\varepsilon+o_{N\to \infty}(1))N^{k-2}$ of choices of $(x_1,\ldots, x_k)$. Consequently,~\eqref{e152} gives
\begin{align}\label{e144}
 \mathbb{E}_{(x_1,\ldots, x_k)\in \mathbb{Z}_N^k,0\leq L_0(x_1,\ldots, x_k)\leq \varepsilon N}\|F_1(L_1(x_1,\ldots, x_k))+\cdots +F_k(L_k(x_1,\ldots, x_k))-\alpha\|\ll \eta^{1/2},   
\end{align}
where
\begin{align*}
L_0(\bm{x})=-(x_1+\cdots+x_k),\quad L_j(\bm{x})=\sum_{i\leq k}(h_i-h_j)x_i. \end{align*}
Note that, importantly, $L_j$ is independent of the $j$th coordinate. 

Now, define the discrete derivatives
\begin{align*}
\partial_{\ell}^{(j)}G(x_1,\ldots x_j, \ldots, x_n)\coloneqq G(x_1,\ldots, x_j+\ell, \ldots, x_n)-G(x_1,\ldots, x_j, \ldots, x_n).    
\end{align*}

For any $k\geq 1$, $\varepsilon>0$, define the $k$-dimensional strip
\begin{align*}
S_{k,\varepsilon}\coloneqq \{(x_1,\ldots, x_k)\in \mathbb{Z}_N^k\colon  \,\,0\leq -(x_1+\cdots +x_k)\leq \varepsilon N\}.    
\end{align*}

Then by~\eqref{e144} and Markov's inequality, we have
\begin{align}\label{e178}
\|F_1(L_1(\bm{x}))+\cdots +F_k(L_k(\bm{x}))-\alpha\|\ll \eta^{1/4},\,\, \textnormal{whenever}\,\, \bm{x}\in S_{k,\varepsilon}\setminus \mathcal{E}    
\end{align}
for some exceptional set $\mathcal{E}\subset \mathbb{Z}_N^k$ satisfying $|\mathcal{E}|\ll \eta^{1/4}|S_{k,\varepsilon}|$. By the union bound and~\eqref{e178}, for any $\bm{\ell}\in ([-\varepsilon\eta N,\varepsilon\eta N]\cap \mathbb{Z})^k$ we have 
\begin{align}\label{e145}
\Big\|\sum_{j=1}^k F_j(L_j(\bm{x}+v_1\ell_1+\cdots +v_k\ell_k))-\alpha\Big\|\ll \eta^{1/4}\,\, \textnormal{for all}\,\, v_i\in \{0,1\}^k\,\, \textnormal{whenever}\,\, \bm{x}\in S_{k,\varepsilon}\setminus \mathcal{E}_{\bm{\ell}},    
\end{align}
where $\mathcal{E}_{\bm{\ell}}$ satisfies $|\mathcal{E}_{\bm{\ell}}|\ll \eta^{1/4}|S_{k,\varepsilon}|$. Hence, utilizing ~\eqref{e145}, the triangle inequality for $\|\cdot\|$, and the fact that $L_j(x_1,\ldots, x_k)$ is independent of $x_j$, we see that 
\begin{align}\label{e147}
  \mathbb{E}_{(x_1,\ldots, x_k)\in \mathbb{Z}_N^k,0\leq L_0(x_1,\ldots, x_k)\leq \varepsilon N}\|\partial_{\ell_2}^{(2)}\cdots \partial_{\ell_k}^{(k)}F_1(L_1(0,x_2,\ldots, x_k))\|\ll \eta^{1/4}      
\end{align}
for any integers $|\ell_i|\leq \varepsilon \eta N$. Note that by Markov's inequality if we split $\mathbb{Z}_N$ into intervals of length $\in [\varepsilon N/2, \varepsilon N]$, then all but $\ll \eta^{1/8}$ proportion of these intervals contain an element $x_0$ such that 
\begin{align*}
  \mathbb{E}_{(x_2,\ldots, x_k)\in \mathbb{Z}_N^{k-1},x_0\leq L_0(0,x_2,\ldots, x_k)\leq \varepsilon N+x_0}\|\partial_{\ell_2}^{(2)}\cdots \partial_{\ell_k}^{(k)}F_1(L_1(0,x_2,\ldots, x_k))\|\ll \eta^{1/8}.      
\end{align*}
Hence, summing over different values of $x_0$, we can remove the constraint $x_0\leq L_0(0,x_2,\ldots, x_k)\leq \varepsilon N+x_0$ and obtain
\begin{align}\label{e153}
\mathbb{E}_{(x_2,\ldots, x_k)\in \mathbb{Z}_N^{k-1}}\|\partial_{\ell_2}^{(2)}\cdots \partial_{\ell_k}^{(k)}F_1(L_1(0,x_2,\ldots, x_k))\|\ll \eta^{1/8}.        
\end{align}

We now need the following lemma that allows us to ``integrate'' this conclusion.

\begin{lemma}[Derivative of $G$ being small in $L^1$ implies that $G$ is almost constant]\label{le_integrate}
Let $\eta>\varepsilon>0$. Let $I\subset \mathbb{Z}_N$ be an interval of length $\geq \varepsilon\eta N$. Suppose that a function $G\colon  I\to \mathbb{R}/\mathbb{Z}$ satisfies
\begin{align}\label{e146}
\mathbb{E}_{x\in I}\|\partial_{\ell}G(x)\|&\ll \eta\quad \textnormal{for all}\quad \ell\in [-\varepsilon\eta N,\varepsilon\eta N].
\end{align}
Then, given any partition $\mathcal{I}$ of $I$ into intervals of lengths $\in [\varepsilon\eta N/200,\varepsilon\eta N/100]$, there exist constants $c_J\in  \mathbb{R}/\mathbb{Z}$ such that 
\begin{align*}
\mathbb{E}_{J\in \mathcal{I}}\mathbb{E}_{x\in J}\|G(x)-c_J\|&\ll \eta^{1/2}.
\end{align*}
\end{lemma}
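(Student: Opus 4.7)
The plan is to choose the constants $c_J$ as point values $c_J := G(y_J)$ for well-chosen $y_J \in J$, and to control the resulting quantity by a double average reduction that converts differences of values of $G$ into values of discrete derivatives of $G$. Specifically, for each $J \in \mathcal{I}$ I would let $y_J \in J$ be a point minimizing the quantity $\mathbb{E}_{x\in J}\|G(x)-G(y)\|$ over $y \in J$; then clearly
\[
\mathbb{E}_{x\in J}\|G(x)-c_J\| \;\leq\; \mathbb{E}_{y\in J}\mathbb{E}_{x\in J}\|G(x)-G(y)\|,
\]
so averaging over $J$, the lemma reduces to showing
\[
\mathbb{E}_{J\in\mathcal{I}}\mathbb{E}_{y\in J}\mathbb{E}_{x\in J}\|G(x)-G(y)\| \ll \eta.
\]

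The key step is to rewrite $G(x)-G(y)=\partial_{x-y}^{(1)}G(y)$ and substitute $\ell = x-y$. Since $|J|\leq \varepsilon\eta N/100$, we have $|\ell|\leq \varepsilon\eta N$, which lies in the range where hypothesis \eqref{e146} is available. The triple sum becomes
\[
\sum_{J\in\mathcal{I}}\sum_{y\in J}\sum_{\ell:\,y+\ell\in J}\|\partial_\ell G(y)\|.
\]
For each fixed $\ell$ with $|\ell|\leq \varepsilon\eta N/100$, every pair $(y,y+\ell)$ lies in at most one interval $J$ of the partition, so the inner double sum is bounded by $\sum_{y\in I}\|\partial_\ell G(y)\| \ll \eta|I|$ by the hypothesis. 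Summing over the $\ll |J|$ admissible values of $\ell$ gives a total bound of $O(\eta|I||J|)$. Dividing by the normalization $|\mathcal{I}|\cdot|J|^2 \asymp |I|\cdot|J|$ yields the desired $O(\eta)$ bound, which is stronger than the $O(\eta^{1/2})$ claimed.

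The main (minor) obstacle is the bookkeeping when the intervals $J$ of $\mathcal{I}$ are not exactly the same length: one has to be careful that all the normalizations $1/|J|$, $1/|\mathcal{I}|$, and the counts $\sum_J|J|^2$ are comparable, which is ensured by the hypothesis that $|J|\in[\varepsilon\eta N/200,\varepsilon\eta N/100]$ (so $|J_{\max}|/|J_{\min}|\leq 2$). One should also verify that $|I|\geq \varepsilon\eta N$ guarantees $|\mathcal{I}|\geq 1$, so the outer average over $J$ is well defined. No higher-order Fourier input is needed; the argument is a direct Fubini-plus-pigeonhole computation exploiting the telescoping identity $G(x)-G(y)=\partial_{x-y}G(y)$.
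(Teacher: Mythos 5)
Your proof is correct, and it takes a genuinely different route from the paper's. The paper's argument first applies Markov's inequality to the $L^1$ hypothesis \eqref{e146} to obtain, for each shift $\ell$, a pointwise bound $\|G(x+\ell)-G(x)\|\ll\eta^{1/2}$ outside an exceptional set of density $O(\eta^{1/2})$; it then compares two nearby points $x,y$ of a common ``good'' set by routing through an intermediate point $x+\ell$, choosing $\ell$ so as to avoid both exceptional sets, and the $\eta^{1/2}$ in the conclusion is precisely the loss incurred at the Markov step. You instead stay entirely in $L^1$: taking $c_J=G(y_J)$ for an optimal (or merely average) base point $y_J\in J$, using the identity $G(x)-G(y)=\partial_{x-y}G(y)$, and applying Fubini over the shift $\ell=x-y$ (each pair lying in at most one cell of the partition), you bound the averaged quantity directly by $O(\eta)$. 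This is both simpler and quantitatively stronger than the stated $O(\eta^{1/2})$, and would marginally improve the exponents propagated through the proof of Proposition \ref{prop_99}. The only points worth making explicit are (i) the boundary convention in \eqref{e146} when $x+\ell\notin I$ — harmless here, since your shifts satisfy $|\ell|\leq|J|\leq\varepsilon\eta N/100$ while $|I|\geq\varepsilon\eta N$, so the pairs you use cover all but a $1/100$ fraction of $I$ and the normalizations agree up to constants — and (ii) the comparability of the cell lengths, which you already address. No gap.
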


\begin{proof}
By~\eqref{e146} and Markov's inequality, for each $\ell\in [- \varepsilon\eta N,\varepsilon\eta N]$ there exists a set $\mathcal{E}'_{\ell}\subset I$ with $|\mathcal{E}'_{\ell}|\ll \eta^{1/2}|I|$ such that
\begin{align*}
 \|G(x+\ell)-G(x)\|\ll \eta^{1/2}\quad \textnormal{whenever}\quad x\not \in \mathcal{E}'_{\ell}.    
\end{align*}
Let $\mathcal{X}$ be the set of $x\in I$ that belong to at most $\varepsilon \eta N/10$ of the sets $\mathcal{E}_{\ell}'$. Note that 
$$2\varepsilon\eta  N\cdot \eta^{1/2}|I|\geq \sum_{\ell}\sum_{x\in I\setminus \mathcal{X}}1_{x\in \mathcal{E}_{\ell}'}=\sum_{x\in I\setminus \mathcal{X}}\sum_{\ell}1_{x\in \mathcal{E}_{\ell}'}\geq |I\setminus \mathcal{X}|\cdot \varepsilon\eta N/10,$$
so $|I\setminus \mathcal{X}|\ll \eta^{1/2} |I|$.

Now, for any $x,y\in \mathcal{X}$ with $|x-y|\leq \varepsilon\eta N/100$, we can find $\ell\in [-\varepsilon\eta N,\varepsilon\eta N]$ such that $x\not \in \mathcal{E}'_{\ell}$ and $y\not \in \mathcal{E}'_{\ell+x-y}$. This means that
\begin{align*}
\|G(x+\ell)-G(x)\|\ll \eta^{1/2}\quad \textnormal{and}\quad \|G(x+\ell)-G(y)\|\ll \eta^{1/2},    
\end{align*}
so by the triangle inequality for all $x,y\in \mathcal{X}$ with $|x-y|\leq \varepsilon\eta N/100$ we have
\begin{align*}
\|G(x)-G(y)\|\ll \eta^{1/2}.    
\end{align*}
Now, letting $\mathcal{I}$ be any partition of $I$ into intervals whose lengths are in $[\varepsilon\eta N/200,\varepsilon\eta N/100]$, we deduce that for some constants $c_J$ we have 
\begin{align*}
\mathbb{E}_{J\in \mathcal{I}}\mathbb{E}_{x\in J}\|G(x)-c_J\|\ll \eta^{1/2},    
\end{align*}
which was the claim.
\end{proof}

Now we shall apply Lemma~\ref{le_integrate} to finish the proof of the case $\sigma(d)\equiv 1$. By~\eqref{e153} and Markov's inequality, for proportion $\geq 1-\eta^{1/16}$ of $(x_3,\ldots, x_k)\in \mathbb{Z}_N^{k-2}$ we have
\begin{align}\label{e185b}
 \mathbb{E}_{x_2\in \mathbb{Z}_N}\|\partial_{\ell_2}^{(2)}\cdots \partial_{\ell_k}^{(k)}F_1(L_1(0,x_2,\ldots, x_k))\|\ll \eta^{1/16}.    
\end{align}
Applying Lemma~\ref{le_integrate} to this, there exists a partition $\mathcal{I}_2$ of $\mathbb{Z}_N$ (which is independent of $x_3,\ldots, x_k$) such that 
\begin{align}\label{e185}
\mathbb{E}_{J_2\in \mathcal{I}_2}\mathbb{E}_{(x_2,\ldots, x_k)\in J_2\times \mathbb{Z}_N^{k-2}}\|\partial_{\ell_3}^{(3)}\cdots \partial_{\ell_k}^{(k)}F_1(L_1(0,x_2,\ldots, x_k))-c_{J_2}(x_3,\ldots, x_k)\|\ll \eta^{1/32}   
\end{align}
for some functions $c_{J_2}(x_3,\ldots, x_k)$. We wish to show that $c_{J_2}(x_3,\ldots, x_k)$ can in fact be taken to be independent of $x_3,\ldots, x_k$. To this end, note that by Markov's inequality for proportion $\geq 1-\eta^{1/64}$ of $y_0\in \mathbb{Z}_N$ and $J_2\in \mathcal{I}_2$ we have
\begin{align*}
\mathbb{E}_{(x_2,\ldots, x_k)\in J_2\times \mathbb{Z}_N^{k-2}, L_1(0,x_2,\ldots, x_k)=y_0}\|\partial_{\ell_3}^{(3)}\cdots \partial_{\ell_k}^{(k)}F_1(y_0)-c_{J_2}(x_3,\ldots, x_k)\|\ll \eta^{1/64}.    
\end{align*}
Averaging over all such $y_0$ and evaluating the $x_2$ average first, we see that for each $J_2\in \mathcal{I}_2$ there exists a constant $c_{J_2}\in \frac{k!}{N}\mathbb{Z}$ such that
\begin{align}\label{e186}
\mathbb{E}_{J_2\in \mathcal{I}_2}\mathbb{E}_{(x_3,\ldots, x_k)\in \mathbb{Z}_N^{k-2}}\|c_{J_2}-c_{J_2}(x_3,\ldots, x_k)\|\ll \eta^{1/64}.  
\end{align}
Hence, we may indeed replace $c_{J_2}(x_3,\ldots, x_k)$ with a constant $c_{J_2}$ in~\eqref{e185}.

Next, note that for some constant $\gamma_{J_2}\in \frac{1}{N}\mathbb{Z}$ we have
\begin{align*}
c_{J_2}=\partial_{\ell_3}^{(3)}\cdots \partial_{\ell_k}^{(k)}\gamma_{J_2}(\sum_{j\leq k}(h_j-h_1)x_j)^{k-2},    
\end{align*}
and hence by~\eqref{e185} and~\eqref{e186} there exist polynomials $P_{k-2,J_2}\in \frac{1}{N}\mathbb{Z}[Y]$ of degree $\leq k-2$ such that
\begin{align*}
\mathbb{E}_{J_2\in \mathcal{I}_2}\mathbb{E}_{(x_2,\ldots, x_k)\in J_2\times \mathbb{Z}_N^{k-2}}\|\partial_{\ell_3}^{(3)}\cdots \partial_{\ell_k}^{(k)}(F_1-P_{k-2,J_2}/N)(L_1(0,x_2,\ldots, x_k))\|\ll \eta^{1/32}.    
\end{align*}
But now we may repeat  for the function $F_1-P_{k-2,J_2}/N$ all the previous arguments (that is, applying Markov's inequality to find many $(x_4,\ldots ,x_k)$ for which the average over $x_3$ is large, and applying Lemma~\ref{le_integrate}) to obtain
\begin{align*}
\mathbb{E}_{J_2\in \mathcal{I}_2,J_3\in \mathcal{I}_3}\mathbb{E}_{(x_2,\ldots, x_k)\in J_2\times J_3\times \mathbb{Z}_N^{k-3}}\|\partial_{\ell_4}^{(4)}\cdots \partial_{\ell_k}^{(k)}(F_1-P_{k-2,J_2,J_3}/N)(L_1(0,x_2,\ldots, x_k))\|\ll \eta^{1/32^2}    
\end{align*}
for some polynomial $P_{k-2,J_2,J_3}\in \frac{1}{N}\mathbb{Z}[Y]$ of degree $\leq k-2$. Continuing inductively, we see that there exist polynomials $P_{J_2,\ldots, J_k}\in \frac{1}{N}\mathbb{Z}[Y]$ of degree $\leq k-2$ such that 
\begin{align*}
\mathbb{E}_{J_i\in \mathcal{I}_i\,\, \forall 2\leq i\leq k}\mathbb{E}_{(x_2,\ldots, x_k)\in J_2\times \cdots \times J_k}\|(F_1-P_{J_2,\ldots, J_k}/N)(L_1(0,x_2,\ldots, x_k))\|\ll \eta^{1/64^k}. \end{align*}
By the pigeonhole principle applied to the $x_3,\ldots, x_k$ variables, we see for some choice of $J_3,\ldots, J_k$ that
\begin{align*}
 \mathbb{E}_{J_2\in \mathcal{I}_2}\mathbb{E}_{x\in J_2}\|(F_1-\frac{1}{N}P_{J_2,\ldots, J_k})((h_2-h_1)x+C(x_3,\ldots,x_k))\|\ll \eta^{1/64^k}   
\end{align*}
for $(x_3,\ldots, x_k)\in J_3\times \cdots \times J_k$ and some $C(x_3,\ldots, x_k)\in \mathbb{Z}_N$. Finally, by Taylor approximation of the exponential function, this gives
\begin{align}\label{e149}
 \mathbb{E}_{J_2\in \mathcal{I}_2}  \mathbb{E}_{x\in J_2}|f_1(x)e(-\tilde{P}_{J_2}(x)/N)-1|\ll \eta^{1/64^{k+1}}   
\end{align}
for some polynomials $\tilde{P}_{J_2}\in \mathbb{Z}[Y]$ of degree $\leq k-2$. Since $\mathcal{I}_2$ is a partition, the claim is now proved with $\gamma(k-1)=64^{-k-2}$, say.\\

\textbf{The case of arbitrary $\sigma(d)$.} Suppose that~\eqref{e143a} holds. By Cauchy--Schwarz, we have
\begin{align}\label{e187}
\mathbb{E}_{d\in \mathbb{Z}_N,0\leq d\leq \varepsilon N}\Big|\mathbb{E}_{n\leq x}\prod_{j=1}^k f_j(n+dh_j)\Big|^2\geq 1-10\eta.
\end{align}
Let $L=\eta^2 N$. We apply van der Corput's inequality (see e.g.~\cite[Lemma 4.1]{gt-nilmanifold}) to~\eqref{e187} in the form
\begin{align*}
|\mathbb{E}_{n\in \mathbb{Z}_N}a(n)|^2\leq \textnormal{Re}(2\mathbb{E}_{\ell\in \mathbb{Z}_N,1\leq \ell\leq L}(1-\ell/L)\mathbb{E}_{n\in \mathbb{Z}_N}a(n)\overline{a(n+\ell)})+O(1/L+L/N). \end{align*}
Substituting this into~\eqref{e187} and exchanging orders of summation, we deduce that
\begin{align*}
2\mathbb{E}_{\ell\in \mathbb{Z}_N,1\leq \ell\leq L}(1-\ell/L)\Big|\mathbb{E}_{d\in \mathbb{Z}_N,0\leq d\leq \varepsilon N}\mathbb{E}_{n\leq x}\prod_{j=1}^k \partial_{\ell}f_j(n+dh_j)\Big|\geq 1-100\eta, 
\end{align*}
where $\partial_{\ell}f(n)\coloneqq f(n+\ell)\overline{f}(n)$.
By Markov's inequality, this gives
\begin{align*}
\Big|\mathbb{E}_{d\in \mathbb{Z}_N,0\leq d\leq \varepsilon N}\mathbb{E}_{n\leq x}\prod_{j=1}^k \partial_{\ell}f_j(n+dh_j)\Big|\geq 1-100\eta^{1/2}     
\end{align*}
for proposition $\geq 1-100\eta^{1/2}$ of $0\leq \ell\leq L$. Hence, by the case $\sigma(d)\equiv 1$ that was already handled, there exist polynomials $P_{\ell,I}$ of degree $\leq k-2$ such that
\begin{align*}
\frac{1}{|\mathcal{I}|}\sum_{I\in \mathcal{I}}|\mathbb{E}_{n\in I}\partial_{\ell}f_1(n)\cdot e(-P_{\ell,I}(n)/N))|\geq 1-2\eta^{\gamma(k-1)}   
\end{align*}
for proportion $\geq 1-100\eta^{1/2}$ of $0\leq \ell\leq L$ (where the partition $\mathcal{I}$ is independent of $\ell$). Now, applying van der Corput's inequality $k-2$ times, we see that
\begin{align*}
\frac{1}{|\mathcal{I}|}\sum_{I\in \mathcal{I}}|\mathbb{E}_{n\in I}\partial_{\ell_1}\cdots \partial_{\ell_{k-1}} f_1(n)|\geq 1-C_k\eta^{\gamma(k-1)/10^{k-2}}   
\end{align*}
for proportion $\geq (1-100\eta^{1/2})^{k-1}$ of all $(\ell_1,\ldots, \ell_{k-1})$ with $1\leq \ell_i\leq L$. Hence, writing $f_1(n)=e(F_1(n))$, we have 
\begin{align*}
\frac{1}{|\mathcal{I}|}\sum_{I\in \mathcal{I}}\mathbb{E}_{n\in I}\|\partial_{\ell_1}\cdots \partial_{\ell_{k-1}}F_1(n)-\alpha\|\ll_k \eta^{\gamma(k-1)/10^{k-2}}   
\end{align*}
for some $\alpha\in \mathbb{R}$ and the same tuples $(\ell_1,\ldots, \ell_{k-1})$ as above.

But now the rest of the argument proceeds as in the $\sigma(d)\equiv 1$ case from~\eqref{e185b} onward (adjusting the value of $\gamma(k)$).
\end{proof}

\subsection{A Fourier analysis argument}

We shall then prove Proposition~\ref{prop_polyphase}; this is the fourth and final step in  the proof of Theorem~\ref{theo_multiplicative-nonpret}. It suffices to show that if
\begin{align*}
X\geq X_0(H),\quad 1\leq Q\ll \log \log H,    
\end{align*}
for suitably chosen $X_0(\cdot)$, then
\begin{align}\label{e140}
\sup_{\substack{P(Y)\in \mathbb{R}[Y]\\\deg{P}\leq k}}\sup_{1\leq a\leq Q}|\mathbb{E}_{x\leq n\leq x+H} g(Qn+a)e(-P(n))|\leq 1-\delta
\end{align}
for all $x\leq X$ outside an exceptional set\footnote{The exceptional set bound $O(X/(\log \log H)^2)$ could be improved, but for our purposes any expression of the form $o_{H\to \infty}(X)$ suffices.} of measure $\leq X/(\log \log H)^2$, where $\delta>0$ depends on $q$ only. Indeed, once we have this, we may apply the union bound to deduce that the average over $n\in [x,x+H]$ in~\eqref{e140} can be replaced with the supremum of averages over $n\in I$, where $I\subset [x,x+H]$ has length $\geq H/Q$.  

For each $x\leq X$, let $P_x$ be a polynomial of degree $\leq k$ (and without loss of generality having constant coefficient zero) that maximizes the left-hand side of~\eqref{e140}. Such a polynomial exists by compactness. 

Let us split the set of $x\leq X$ according to whether the polynomial $P_x$ is major or minor arc. Let 
$$L=\log \log H,$$
and define
\begin{align*}
\mathcal{X}_1&\coloneqq \{x\leq X\colon   P_x(n)=\sum_{1\leq j\leq k}c_jn^j\colon  \quad \forall\, j\leq k\colon  \,\, \left|c_j-\frac{a_j}{q_j}\right|\leq \frac{1}{q_j(H/L)^j}\,\, \textnormal{for some}\,\, |a_j|,q_j\leq L\},\\
\mathcal{X}_2&\coloneqq [0,X]\setminus \mathcal{X}_1.
\end{align*}
\textbf{Case 1: Major arcs.} To prove~\eqref{e140} in the major arc case $x\in \mathcal{X}_1$, we start with the following lemma. 

\begin{lemma}\label{le_pret}
 Let $q\in \mathbb{N}$ be fixed,  and let $g\colon  \mathbb{N}\to \mu_q$ be multiplicative. Let $r\in \mathbb{N}$, and let $\chi\pmod r$ be a Dirichlet character. Then, for all $x\geq 2$, we have
 \begin{align*}
 \inf_{|t|\leq x}\mathbb{D}(g,\chi(n)n^{it};x)\geq \frac{1}{3q}\mathbb{D}(g,\chi(n);x)-O(\log r).
 \end{align*}
\end{lemma}

\begin{proof}
 This is similar to~\cite[Lemma C.1]{mrt-average}. We may assume that $r\leq \log x$. Let $t$ satisfy $|t|\leq x$. By the triangle inequality for the pretentious distance and the fact that $g^{q}=1$, we have
 \begin{align}\label{eq:pret1}
 q\mathbb{D}(g,\chi(n)n^{it};x)\geq \mathbb{D}(1,\chi^{q}(n)n^{iqt};x).    
 \end{align}
By the Vinogradov--Korobov zero-free region, if either $|t|\geq 1$ or $\chi^q$ is non-principal, this is 
\begin{align*}
\geq \sqrt{\left(\frac{1}{3}-o(1)\right)\log \log x} \geq \left(\frac{1}{\sqrt{6}}-o(1)\right)\mathbb{D}(g,\chi;x),
\end{align*}
since necessarily $\mathbb{D}(g,\chi;x)\leq \sqrt{2\log \log x+O(1)}$ by Mertens's theorem. We may therefore assume that $|t|\leq 1$ and that $\chi^{q}=\chi_0$ is principal. 

Since $\mathbb{D}(1,\chi_0;x)=O(\log r)$, returning to~\eqref{eq:pret1} we obtain the desired claim unless
\begin{align}\label{eq:pret2}
 \mathbb{D}(1,n^{iqt};x)\leq \frac{1}{3}\mathbb{D}(g,\chi;x).   
\end{align}
From the prime number theorem, we see for $|u|\leq q$ that $\mathbb{D}(1,n^{iu};x)=\log(1+|u|\log x)+O(1)$. Hence, $\mathbb{D}(1,n^{iqt};x)=\mathbb{D}(1,n^{it};x)+O(1)$, so~\eqref{eq:pret2} gives
\begin{align}\label{eq:pret3}
 \mathbb{D}(1,n^{it};x)\leq \frac{1}{3}\mathbb{D}(g,\chi;x)+O(1).   
\end{align}
But~\eqref{eq:pret3} and the triangle inequality for the pretentious distance imply
\begin{align*}
 \mathbb{D}(g,\chi(n)n^{it};x)\geq \mathbb{D}(g\overline{\chi},1;x)-\mathbb{D}(1,n^{it};x)\geq \frac{2}{3}\mathbb{D}(g,\chi;x).   
\end{align*}
The claim now follows in all cases. 
\end{proof}

In the major arc case, we will actually prove~\eqref{e140} with $(\log \log H)^{-1}$ in place of $1-\delta$. For $x\in \mathcal{X}_1$, we may write
\begin{align*}
P_x(n)=P_x^{(1)}(n)+P_x^{(2)}(n),\quad P_x^{(1)}(n)\coloneqq \sum_{1\leq j\leq k}\frac{a_j}{q_j}n^{j}.    
\end{align*}
Thus, by partial summation,
\begin{align*}
\left|\mathbb{E}_{x\leq n\leq x+H}  g(Qn+a)e(-P_x(n))\right|\ll L^k \sup_{\substack{I\subset [x,x+H]\\|I|\geq H/L^{2k}}} \left|\mathbb{E}_{n\in I}g(Qn+a)e(-P_x^{(1)}(n))\right|+1/L^k.
\end{align*}
By expanding $e(-P_x^{(1)}(n))$ in terms of Dirichlet characters modulo $R\coloneqq q_1\cdots q_k$, we find
\begin{align}\label{eq21b}
\sup_{\substack{I\subset [x,x+H]\\|I|\geq H/L^{2k}}}\left|\mathbb{E}_{n\in I}  g(Qn+a)e(-P_x^{(1)}(n))\right|\ll R \sup_{\chi\pmod{R}}\,\sup_{m\mid R}\,\sup_{\substack{I\subset [x,x+H]\\|I|\geq H/L^{2k}}}|\mathbb{E}_{n\in I}  g(Qn+a)\chi(n/m)1_{m\mid n}|.    \end{align}
Writing $n=m(Rn'+b)$ for some $1\leq b\leq R$, and expanding out $1_{n\equiv a+Qmb \pmod{QRm}}$ in terms of Dirichlet characters, we can bound the previous expression by
\begin{align}\label{e150}
\ll R^2Q\sup_{\xi\pmod{R^2Q}}\,\sup_{m'\mid R^2Q}\,\sup_{\substack{I'\subset [x/(Qm'),(x+H)/(Qm')]\\|I'|\geq H/(R^2QL^{2k})}}|\mathbb{E}_{n\in I'} g(m'n)\xi(n)|. 
\end{align}
Recall that we are working under the non-pretentiousness assumption $\mathbb{D}(g,\xi;\infty)=\infty$. Since $g$ takes values in $\mu_q$, we can boost this assumption to 
\begin{align}\label{e186a}
\inf_{|t|\leq x}\mathbb{D}(g,\xi(n)n^{it};x)\xrightarrow{x\to \infty} \infty     
\end{align}
for all $\xi\pmod{R^2Q}$ by Lemma~\ref{le_pret}. 

By~\eqref{e186a} and the Matom\"aki--Radziwi\l{}\l{} theorem, in the form of~\cite[Theorem A.1]{mrt-average}, for $X$ large enough in terms of $H'$ we conclude that
\begin{align*}
|\mathbb{E}_{y\leq n\leq y+H'}g(m'n)\xi(n)|\ll (\log \log H')/\log H'     
\end{align*}
for all natural numbers $y\leq X$, apart from $\ll X(\log \log H')/\log H'$ exceptions.

By the union bound (which allows us to restrict to fixed $\xi,m',I'$ in~\eqref{e150}), we now see that~\eqref{e150} is $\ll (\log H)^{-1/2}$ for $x\in \mathcal{X}_1\setminus \mathcal{E}$, where $\mathcal{E}\subset \mathbb{N}$ is some exceptional set with $|\mathcal{E}\cap [1,X]|\ll X/(\log H)^{1/2}$.\\

\textbf{Case 2: Minor arcs.}
We are now left with the minor arc case $x\in \mathcal{X}_2$. We begin with two lemmas, the first of which says that $P_x(n)\pmod 1$ is uniformly distributed for $x\in \mathcal{X}_2$. In what follows, let
\begin{align*}
\eta=L^{-1}=(\log \log H)^{-1}.    
\end{align*}

\begin{lemma}[Erd\H{o}s--Tur\'an]\label{le_equidist}
Let $x\in \mathcal{X}_2$. There exists a constant $C=C_k\geq 1$ such that for any arc $I$ of the unit circle of $\mathbb{C}$ we have
\begin{align*}
\mathbb{E}_{x\leq n\leq x+H}1_{e(P_x(n))\in I}=|I|+O(\eta^{1/C}).   
\end{align*}
\end{lemma}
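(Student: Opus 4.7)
The plan is to invoke the classical Erdős--Turán inequality, which bounds the discrepancy of a sequence on the circle by a truncated sum of Weyl sums: for any $K\geq 1$,
\[
\sup_I\left|\mathbb{E}_{x\leq n\leq x+H}1_{e(P_x(n))\in I} - |I|\right| \ll \frac{1}{K} + \sum_{h=1}^K \frac{1}{h}W_h(x),
\]
where $W_h(x) := |\mathbb{E}_{x\leq n\leq x+H} e(hP_x(n))|$. It therefore suffices to exhibit a constant $C'=C'(k)$ such that $W_h(x)\ll \eta^{1/C'}$ uniformly for every $x\in\mathcal{X}_2$ and every $1\leq h\leq K$; then, taking $K:=\eta^{-1/(2C')}$, one obtains the claimed bound with $C=3C'$, say.

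To control $W_h(x)$ for $x\in\mathcal{X}_2$, I would apply the quantitative Weyl equidistribution theorem for polynomial sequences, which is obtained via iterated Weyl differencing: if a polynomial $Q(n)=\sum_{1\leq j\leq k}\gamma_j n^j$ has $|\mathbb{E}_{x\leq n\leq x+H}e(Q(n))|\geq \delta$, then there exists an integer $q\leq \delta^{-C_k}$ such that $\|q\gamma_j\|\leq \delta^{-C_k}/H^j$ for every $1\leq j\leq k$. Applied to $Q=hP_x$ with $\delta=\eta^{1/C'}$, this furnishes simultaneous rational approximations $|hc_j-a_j/q_j|\leq \eta^{-O_k(1)}/(q_jH^j)$ with $q_j\leq \eta^{-O_k(1)}$. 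Dividing through by $h\leq K$ yields, for each $j$, an approximation of $c_j$ with denominator $\leq hq_j\leq K\cdot \eta^{-O_k(1)}$ and error $\leq \eta^{-O_k(1)}/(q_jH^j)$. Choosing $C'$ sufficiently large in terms of $k$, both the denominator and the error are bounded by $L=\eta^{-1}$ and $1/(q_j(H/L)^j)$ respectively. But this would place $x\in\mathcal{X}_1$, contradicting the hypothesis $x\in\mathcal{X}_2$.

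The only substantive input is the quantitative Weyl theorem quoted above, which is classical and appears in standard references on exponential sums (e.g.\ in Vaughan's book, or in Tao's \emph{Higher order Fourier analysis}); the rest is bookkeeping. I expect no serious obstacle: the main care is simply in tracking how the minor arc radii $1/(q_j(H/L)^j)$ in the definition of $\mathcal{X}_2$ transform under multiplication of the polynomial by the auxiliary parameter $h\leq K$, which is why one must choose $K$ as a small enough power of $L$ before choosing $C'$.
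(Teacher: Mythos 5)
Your proof is correct and follows essentially the same route as the paper: the Erd\H{o}s--Tur\'an inequality with truncation at a small negative power of $\eta$, followed by the quantitative Weyl estimate (the paper cites \cite[Lemma 4.4]{gt-nilmanifold}) to show that a large Weyl sum $|\mathbb{E}_{x\leq n\leq x+H}e(hP_x(n))|$ would force $x\in\mathcal{X}_1$. You merely spell out the bookkeeping (dividing the rational approximation of $hc_j$ by $h$ and checking the denominators and radii against the definition of $\mathcal{X}_1$) that the paper compresses into the phrase ``by the assumption $x\in\mathcal{X}_2$ and standard estimates for Weyl sums.''
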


\begin{proof}
By the Erd\H{o}s--Tur\'an inequality~\cite[Chapter 1]{montgomery}, for any $1\leq M_0\leq H$ we have
\begin{align}\label{e151}
|\mathbb{E}_{x\leq n\leq x+H}1_{e(P_x(n))\in I}-|I||\ll \frac{1}{M_0}+\sum_{1\leq j\leq M_0}\frac{1}{j}|\mathbb{E}_{x\leq n\leq x+H}e(jP_x(n))|.    
\end{align}
Take $M_0=\eta^{-1/C}$, where  $C=C_k$ is large. Then, by the assumption $x\in \mathcal{X}_2$ and standard estimates for Weyl sums (see~\cite[Lemma 4.4]{gt-nilmanifold}), we get a bound of $\ll \eta^{1/C}$ for the right-hand side of~\eqref{e151}. This completes the proof.
\end{proof}

\begin{lemma}[Maximizing an exponential sum]\label{le_inequality}
Let $n\geq 1$ be an integer, let $m\in [1,n]$ be a real number, and let $w_j\in [0,1]$ be real numbers for $1\leq j\leq n$ with $\sum_{1\leq j\leq n}w_j=m$. Then 
\begin{align}\label{eq:00}
\left|\sum_{1\leq j\leq n}w_je\left(\frac{j}{n}\right)\right|\leq \left|\sum_{1\leq j\leq \lfloor m\rfloor}e\left(\frac{j}{n}\right)\right|+\{m\}.   
\end{align}
If equality holds, then after a cyclic permutation of the indices we have $w_j=1$ for $j\leq m$ and $w_j=0$ for $\lceil m \rceil <j\leq n$.
\end{lemma}

\begin{proof}
Consider first the case where $m$ is an integer.

Let $\theta$ be a real number such that
\begin{align*}
e(\theta) \sum_{1\leq j\leq n}w_je\left(\frac{j}{n}\right)\geq 0.   
\end{align*}
Then 
\begin{align*}
\left|\sum_{1\leq j\leq n}w_je\left(\frac{j}{n}\right)\right|=\sum_{1\leq j\leq n}w_j\textnormal{Re}\left(e\left(\frac{j}{n}+\theta\right)\right).
\end{align*}
The summand above equals to $w_j\cos(2\pi(\frac{j}{n}+\theta))$. Since the $m$ largest values of $\cos(2\pi(\frac{j}{n}+\theta))$, with $j\in \mathbb{Z}_n$, are attained for $j\in I$, where $I$ some interval of $\mathbb{Z}_n$ of length $m$, the claim follows from the rearrangement inequality.

Now, suppose that $m$ is not an integer. Let $k$ be the least integer for which $$\sum_{1\leq j\leq k}w_j\geq \{m\}.$$
Let $w_j'=0$ for $j\leq k-1$, $w_k'=w_k-\{m\}+\sum_{1\leq j\leq k-1}w_j$, and $w_j'=w_j$ for $j>k$. Then $0\leq w_j'\leq 1$ and $\sum_{1\leq j\leq n}w_j'=\lfloor m\rfloor$, and by the triangle inequality $$\left|\sum_{j\leq n}w_je\left(\frac{j}{n}\right)\right|\leq \sum_{j\leq k-1}w_j+\left|\sum_{k\leq j\leq n}w_je\left(\frac{j}{n}\right)\right|\leq  \left|\sum_{j\leq n}w_j'e\left(\frac{j}{n}\right)\right|+\{m\}.$$ 
Now the claim follows from the case where $m$ is an integer.
\end{proof}

Now, let $J_0=\lfloor \eta^{-1/(2C)}\rfloor$. Recall that $g^{q}\equiv 1$. Thus, we have
\begin{align*}
&\mathbb{E}_{x\leq n\leq x+H}g(Qn+a)e(P_x(n))\\
&\leq \sum_{0\leq b<q} \left|\sum_{0\leq j<J_0}e\left(\frac{j}{J_0}\right)\mathbb{E}_{x\leq n\leq x+H}1_{g(Qn+a)=e(b/q)}1_{P_x(n)\in [j/J_0,(j+1)/J_0]\pmod 1}\right|+O(\eta^{1/(2C)}).
\end{align*}
Denote
\begin{align*}
\alpha(j)\coloneqq  \mathbb{E}_{x\leq n\leq x+H}1_{g(Qn+a)=e(b/q)}1_{P_x(n)\in [j/J_0,(j+1)/J_0]\pmod 1}\in [0,1].  
\end{align*}
Then 
\begin{align}\label{eq22}
\sum_{0\leq j<J_0}\alpha(j)=\delta_b,\quad \textnormal{where}\quad  \delta_b\coloneqq \mathbb{E}_{x\leq n\leq x+H}1_{g(Qn+a)=e(b/q)}.    
\end{align}
Moreover, by Lemma~\ref{le_equidist}, we have
\begin{align}\label{eq23}
\alpha(j)\leq  \mathbb{E}_{x\leq n\leq x+H}1_{P_x(n)\in [j/J_0,(j+1)/J_0]\pmod 1}\leq \frac{1}{J_0}+O(\eta^{1/C})\leq \frac{1+\eta^{1/(2C)}}{J_0}.    
\end{align}
Combining~\eqref{eq22} and~\eqref{eq23} with Lemma~\ref{le_inequality} (applied to the real numbers $\alpha(j)\frac{J_0}{1+\eta^{1/(2C)}}\in [0,1]$), we obtain
\begin{align*}
\left|\sum_{0\leq j<J_0}e\left(\frac{j}{J_0}\right)\alpha(j)\right|&\leq \frac{1+\eta^{1/(2C)}}{J_0}\left|\sum_{1\leq j\leq \delta_b J_0}e\left(\frac{j}{J_0}\right)\right|+O\left(\frac{1}{J_0}\right)\\
&\leq (1+2\eta^{1/(2C)})\left|\int_{0}^{\delta_b}e(t)\, dt\right|+O(\eta^{1/(2C)})\\
&\leq (1+3\eta^{1/(2C)})\frac{\sin(\pi \delta_b)}{\pi}.    
\end{align*}

The function $f(y)=\sin(\pi y)$ satisfies $f''(y)=-\pi^2\sin(\pi y)\leq 0$ for $y\in [0,1]$, so $f$ is concave. Therefore, by Jensen's inequality,
\begin{align*}
 \frac{1}{\pi}\sum_{0\leq b<q}\sin(\pi \delta_b)\leq \frac{q}{\pi}\sin(\pi\mathbb{E}_{0\leq b<q}\delta_b)=\frac{q}{\pi}\sin\left(\frac{\pi}{q}\right)\coloneqq 1-2\delta   
\end{align*}
for some $\delta>0$ depending only on $q$ (here we used the fact that $\sin x<x$ for $x>0$).

We now conclude that for any $x\in \mathcal{X}_2$ we have 
\begin{align*}
|\mathbb{E}_{x\leq n\leq x+H}g(Qn+a)e(P_x(n))|\leq (1+3\eta^{1/(2C)})(1-2\delta)+O(\eta^{1/(2C)})\leq 1-\delta   
\end{align*}
since $\eta>0$ can be made arbitrarily small by taking $H$ to be large. Recall from the major arc case that for $x\in \mathcal{X}_1$ we had
\begin{align*}
\sup_{1\leq a\leq Q}\left|\mathbb{E}_{x\leq n\leq x+H}g(Qn+a)e(P_x(n))\right|\leq \eta,   
\end{align*}
unless $x\leq X$ belongs to an exceptional set of size $\ll X/\log \log H$. Therefore, for all $x\leq X$ outside an exceptional set of size $\ll X/\log \log H$ we have 
\begin{align*}
\sup_{1\leq a\leq Q}|\mathbb{E}_{x\leq n\leq x+H}g(Qn+a)e(P_x(n))|\leq 1-\delta.  
\end{align*}
 As noted earlier, this was enough to complete the proof of Proposition~\ref{prop_polyphase}.

\section{The pretentious case}\label{sec:pret}

We will next prove our result on pretentious functions, Theorem~\ref{theo_multiplicative-pret}. 

We remark that there is an explicit asymptotic formula for $\sum_{n\leq x}g(P(n))$ due to Klurman~\cite{klurman}, but we were not able to utilize it here, since the error term $\mathbb{D}_P(g,1;\log x,x)$ in that formula (as defined in~\cite{klurman}) is guaranteed to be small only in the case where $P(n)$ is almost always $O(n^{1+o(1)})$-smooth, which should only happen when $P(x)$ is linear. Of course, we could first factorize $g=g_sg_{l}$, where $g_s$ is the ``smooth part'' of $g$ and $g_l$ is the ``rough part'', and apply Klurman's formulas to show that $g_s(P(n))$ exhibits some oscillations, but unfortunately it does not seem possible to conclude from this that $g(P(n))$ itself is oscillating when \emph{restricted} to the set where $g$ and $g_s$ agree (even though we know by assumption that this set is large).  

The proof of Theorem~\ref{theo_multiplicative-pret} is instead based on the principle (which also underlies the proofs in~\cite{klurman}) that if $g$ is pretentious, then the ``$W$-tricked'' functions $n\mapsto g(Wn+n_0)$ become ``almost periodic'' in a suitable sense as $w$ tends to infinity (if $t=0$; if instead $t\neq 0$, then these functions behave like Archimedean characters). We formalize this principle as the following lemma.

\begin{lemma}[Almost periodicity of $W$-tricked pretentious functions]\label{le_concentration} Let $x\geq 10$, let $2\leq w\leq \log x$ be an integer, and let $g\colon  \mathbb{N}\to \mathbb{D}$ be multiplicative. Suppose that there exist a Dirichlet character $\chi$ and a real number $t$ such that $g(p)=\chi(p)p^{it}$ for all $p>x$. Denote $W=\prod_{p\leq w}p^{w}$, let $P(x) \in \mathbb{Z}[x]$ be any irreducible polynomial of degree $d\geq 1$ with leading coefficient $c\geq 1$, and let $0\leq b\leq W$ be any integer  with $P(b)>0$ and $p^{\lfloor w/2\rfloor}\nmid P(b)$ for all $p\leq w$. Then we have 
\begin{align*}
&\frac{1}{x}\sum_{n\leq x}|g(P(Wn+b))-(c(Wn)^d)^{it}\widetilde{\chi}(P(b))\exp(F(W))g'_{\leq w}(P(b))|\\
&\ll_{P,\chi,t} \mathbb{D}(g,\chi(n)n^{it};w,\infty)+w^{-1/3},    
\end{align*}
where $g'_{\leq w}\colon  \mathbb{N}\to \mathbb{D}$ is the completely multiplicative function given by $g'_{\leq w}(p)=g(p)\overline{\widetilde{\chi}(p)}p^{-it}$ for $p\leq w$ and $g'_{\leq w}(p)=1$ for $p>w$, and $\widetilde{\chi}$ is a modified character given on the primes by $\widetilde{\chi}(p)=\chi(p)$ if $p\nmid \textnormal{cond}(\chi)$ and $\widetilde{\chi}(p)=1$ otherwise, and $F(W)\coloneqq \sum_{w<p\leq x}\rho(p)\frac{g(p)\overline{\chi(p)}p^{-it}-1}{p}$, where $\rho(q)$ denotes the number of solutions to the congruence $P(y)\equiv 0\pmod{q}$.   
\end{lemma}

\begin{proof}
This is similar to~\cite[Lemma 2.5]{kmpt}; for the sake of completeness we give details. 

We may assume throughout that $P, \chi,t$ are fixed and that $w$ is large enough in terms of them, as otherwise the claim is trivial. Note that by Taylor approximation we have $P(Wn+b)^{it}=(c(Wn)^d)^{it}+O(1/n)$. Also note that as $W\mid P(Wn+b)-P(b)$ and $p^{\lfloor w/2\rfloor}\nmid P(b)$ for all $p\leq w$, we have $\widetilde{\chi}(P(Wn+b))=\widetilde{\chi}(P(b))$. Hence we can reduce to the case $t=0$, $\chi=1$.

Now factorize $g=g_{\leq w}'g_{>w}'$, where $g_{>w}'$ is the completely multiplicative function given on prime powers by $g_{>w}'(p^k)=1$ for $p\leq w$ and $g_{>w}'(p^k)=g(p^k)$ for $p>w$. It suffices to show that we have
\begin{align}\label{eq:turan1}
\frac{1}{x}\sum_{n\leq x}|g_{>w}'(P(Wn+b))-\exp(F(W))|\ll \mathbb{D}(g,1;w,\infty)+w^{-1/3}.   \end{align}

Define an additive function $h$ on prime powers by $h(p^k)\coloneqq g_{>w}(p^k)-1$. Then, using $z=e^{z-1+O(|z-1|^2)}$ for $|z|\leq 1$, the left-hand side of~\eqref{eq:turan1} becomes
\begin{align}\label{eq:turan2}
\frac{1}{x}\sum_{n\leq x}\Big|\prod_{p^k\mid \mid P(Wn+b)}\exp(h(p^k)+O(|h(p^k)|^2))-\exp(F(W))\Big|.  
\end{align}
Note that we have the inequality 
\begin{align}\label{eq:turan0}
\Big|\prod_{j=1}^mz_j-\prod_{j=1}^m w_j\Big|\leq \sum_{j=1}^m|z_j-w_j|\quad \textnormal{  for  }|z_j|, |w_j|\leq 1,
\end{align}
which is easily established by induction on $m$. Using ~\eqref{eq:turan0}, we see that~\eqref{eq:turan2} is
\begin{align}\label{eq:turan3}
\frac{1}{x}\sum_{n\leq x}|\exp(h(P(Wn+b)))-\exp(F(W))|+O\Big(\frac{1}{x}\sum_{n\leq x}\sum_{\substack{p^k\mid \mid P(Wn+b)\\ w<p\leq x}}|h(p^k)|^2\Big).  
\end{align}
From the inequality $|e^{z_1}-e^{z_2}|\ll |z_1-z_2|$ for $\textnormal{Re}(z_1), \textnormal{Re}(z_2)\leq 0$, we see that this is
\begin{align*}
\ll  \frac{1}{x}\sum_{n\leq x}|h(P(Wn+b)))-F(W)|+\frac{1}{x}\sum_{n\leq x}\sum_{\substack{p^k\mid \mid P(Wn+b)\\ w<p\leq x}}|h(p^k)|^2.   
\end{align*}

We can write $h=h_{(w,x^{1/2})}+h_{[x^{1/2},x]}$, where for an interval $I$ the additive function $h_I$ is given on prime powers by $h_I(p^k)=h(p^k)1_{I}(p)$. Also write $F(W)=F_{(w,x^{1/2})}(W)+F_{[x^{1/2},x]}(W)$, where $F_I(W)$ is defined similarly as $F(W)$ but with the sum restricted to $p\in I$. By the triangle inequality, it follows that~\eqref{eq:turan3} is
\begin{align*}
 &\ll \frac{1}{x}\sum_{n\leq x}|h_{(w,x^{1/2})}(P(Wn+b)))-F_{(w,x^{1/2})}(W)|+\frac{1}{x}\sum_{n\leq x}|h_{[x^{1/2},x]}(P(Wn+b)))-F_{[x^{1/2},x]}(W)|\\
 &+\frac{1}{x}\sum_{n\leq x}\sum_{\substack{p^k\mid \mid P(Wn+b)\\ w<p\leq x}}|h(p^k)|^2\\ 
 &\coloneqq S_1+S_2+S_3.
\end{align*}

 Note that by the irreducibility of $P$ we have $\rho(p^k)\ll 1$. Then by Cauchy--Schwarz,~\cite[Lemma 2.2]{klurman} and the prime number theorem,
\begin{align*}
|S_1|&\ll \Big(\frac{1}{x}\sum_{n\leq x}|h_{(w,x^{1/2})}(P(Wn+b)))-F_{(w,x^{1/2})}(W)|^2\Big)^{1/2}\\
&\ll \Big(\frac{1}{x}\sum_{n\leq x}\Big|h_{(w,x^{1/2})}(P(Wn+b)))-\sum_{\substack{p^k\leq x\\p>w}}h_{(w,x^{1/2})}(p^k)\left(\frac{\rho(p^k)}{p^k}-\frac{\rho(p^{k+1})}{p^{k+1}}\right)\Big|^2\Big)^{1/2}+w^{-1/2}\\
&\ll \Big(\sum_{\substack{p\in (w,x^{1/2})\\k\in \mathbb{N}}}\frac{|h(p^{k})|^2}{p^{k}}+w^{-1}+\frac{(\log \log x)^3}{\log x}\Big)^{1/2}\\
&\ll \Big(\sum_{p}\frac{|h(p)|^2}{p}\Big)^{1/2} +w^{-1/3}.
\end{align*}
Since $|h(p)|^2\leq 2-2\textnormal{Re}(g(p))$, we see that the contribution of $S_1$ is admissible.

We then bound $S_2$. By the triangle inequality and the prime number theorem, 
\begin{align*}
|S_2|\ll \frac{1}{x}\sum_{\substack{p\in (x^{1/2},x]\\k\in \mathbb{N}}}|h(p^{k})|\sum_{\substack{n\leq x\\p^{k}\mid P(Wn+b)}}1+\sum_{p\in [x^{1/2},x]}\frac{|h(p)|}{p}\ll \sum_{\substack{p\in (x^{1/2},x]\\k\in \mathbb{N}}}\frac{|h(p^{k})|\rho(p^k)}{p^k}+\frac{1}{\log x}.
\end{align*}
Using Cauchy--Schwarz, Mertens's theorem and the bound $\rho(p^k)\ll 1$, the main term here is
\begin{align*}
&\leq \Big(\sum_{\substack{p\in (x^{1/2},x]\\k\in \mathbb{N}}}\frac{|h(p^{k})|^2}{p^k}\Big)^{1/2}\Big(\sum_{\substack{p\in (x^{1/2},x]\\k\in \mathbb{N}}}\frac{\rho(p^{k})^2}{p^k}\Big)^{1/2}\\
&\ll \Big(\sum_{\substack{p\in (x^{1/2},x]\\k\in \mathbb{N}}}\frac{|h(p^{k})|^2}{p^k}\Big)^{1/2}\\
&\ll \Big(\sum_{p}\frac{|h(p)|^2}{p}\Big)^{1/2} +w^{-1/3},
\end{align*}
which is again an admissible contribution.

Lastly, by the prime number theorem we have
\begin{align*}
|S_3|\ll \sum_{\substack{p\in (w,x]\\k\in \mathbb{N}}}\frac{|h(p^k)|^2\rho(p^k)}{p^k}+\frac{1}{\log x}\ll \sum_{p}\frac{|h(p)|^2}{p}+w^{-1},   
\end{align*}
and this is again small enough.
Combining the bounds for $S_1,S_2,S_3$, the claim follows. 
\end{proof}

\begin{proof}[Proof of Theorem~\ref{theo_multiplicative-pret}]
We may assume that $g$ is unimodular, since if there exists a prime power $p^{\alpha}$ such that $|g(p^{\alpha})|=1-\delta'<1$ and $v_p(P(n))=\alpha$ for some $n$, then $|g(P(n))|\leq 1-\delta'$ for a positive lower density of $n$.

Let $\widetilde{\chi}$ be the completely multiplicative function given by $\widetilde{\chi}(p)=\chi(p)$ for $p\nmid q$ and $\widetilde{\chi}(p)=1$ otherwise, where $q=\textnormal{cond}(\chi)$. We claim that we may assume that $n\mapsto \widetilde{\chi}(P(n))$ is constant. Indeed, otherwise we may consider the polynomial $Q(n)=P(q^rn+\ell)$, where $\ell\geq 1$ is any integer such that $P(\ell)> 0$  and $r\in \mathbb{N}$ is such that $p^r\nmid P(\ell)$ for all $p\mid q$. Note that $\widetilde{\chi}(Q(n))=\widetilde{\chi}((q^r,Q(0)))$. The polynomial $Q$ has property $\mathcal{S}$ if $P$ has it, and since $n\mapsto g(p)^{v_p(P(n))}$ is non-constant for infinitely many $p$, the same is true for $Q$. Note also that if $|\frac{1}{x}\sum_{n\leq x}g(Q(n))|\leq 1-\delta'$, then $|\frac{1}{q^rx}\sum_{n\leq q^rx}g(P(n))|\leq 1-\delta'/(2q^r)$. 

We may also assume that $\chi\equiv 1$, since if we write $g=\widetilde{\chi}h$, then by the previous assumption for some constant $c\in \mathbb{D}$ we have $g(P(n))=c\cdot h(P(n))$ for all $n$, and now $h$ pretends to be $n^{it}$.

Let $g'$ be the completely multiplicative function given on the primes by $g'(p)=g(p)p^{-it}$. We may assume that $(P,g')$ is admissible, since if $t=0$ this reduces to our assumption that $(P,g)$ is admissible and if $t\neq 0$ and $(P,g')$ is not admissible, then for some constant $c\in \mathbb{D}$ we have $g(P(n))P(n)^{-it}=c$ for all $n$, in which case $|\frac{1}{x}\sum_{n\leq x}g(P(n))|=|\frac{1}{x}\sum_{n\leq x}cP(n)^{it}|\leq 1-\delta'$ for some constant $\delta'>0$.

Now, since $(P,g')$ is admissible, there exist a prime $p_0$ and two natural numbers $n_1\neq n_2$ such that $\kappa\coloneqq |g'(p_0^{v_{p_0}(P(n_1))})- g'(p_0^{v_{p_0}(P(n_2))})|>0$. Let $\ell\geq 1$ be any integer with $P(\ell)\neq 0$. Let $w$ be a large enough integer in terms of $P,\ell, n_1,n_2$, and let $W=\prod_{p\leq w}p^w$. Thus $W$ is a large constant. By the Chinese remainder theorem, we can find two distinct natural numbers $n_1',n_2'\leq W$ such that for all primes $p\leq w$ we have
\begin{align}\label{eq:turan5}
v_p(P(n_i'))=\begin{cases}v_{p_0}(P(n_i)),\quad p=p_0\\
v_p(P(\ell)),\quad\,\,\,\,\, p\neq p_0.
\end{cases}
\end{align}
In particular, if we define the completely multiplicative function $g_{\leq w}'$ by $g_{\leq w}'(p)=g'(p)$ for $p\leq w$ and $g_{\leq w}'(p)=1$ for $p>w$, then 
\begin{align}\label{e189}
|g_{\leq w}'(P(n_1'))- g_{\leq w}'(P(n_2'))|=\kappa.
\end{align}

 Define the sets
\begin{align*}
\mathcal{S}_{j}\coloneqq \{n\leq x\colon  \,\, P(Wn+j)\,\, \textnormal{is}\,\, x\,\textnormal{ smooth}\}. 
\end{align*}
Note that the number of $n\leq x$ for which $P(Wn+j)$ has a prime factor from $(x,W(x+1)]$ is 
\begin{align*}
\ll \sum_{x\leq p\leq W(x+1)}\left(\frac{x}{p}+1\right)\ll \frac{Wx}{\log x}.   
\end{align*}
Hence, by the assumption that $P$ has property $\mathcal{S}$, there exists an absolute constant $\eta_0>0$ such that
\begin{align}\label{e188}
|\mathcal{S}_{j}|\geq \eta_0 x    
\end{align}
for all large enough $x$.

Let $P=P_1^{m_1}\cdots P_{s}^{m_s}$ for some $m_j\in \mathbb{N}$ and pairwise coprime irreducible $P_j(x)\in \mathbb{Z}[x]$. Let $P_j$ have leading coefficient $c_j\geq 1$ and degree $d_j$. Let $g_{\leq x}$ be the completely multiplicative function defined on primes $p$ by $g_{\leq x}(p)=g(p)$ if $p\leq x$ and by $g_{\leq x}(p)=p^{it}$ if $p>x$. Recall that $\chi\equiv 1$ and observe that $\sum_{w\leq p\leq x}\rho(p)\frac{g_{\leq x}(p)p^{-it}-1}{p}=o_{w\to \infty}(1)$, where $\rho(r)$ denotes the number of solutions to the congruence $P(y)\equiv 0\pmod{r}$.
Then, by Lemma~\ref{le_concentration}, for any $l\in \{1,\ldots,m\}$ and $j\in \{1,2\}$ we have
\begin{align*}
 &\frac{1}{x}\sum_{\substack{n\leq x\\n\in \mathcal{S}_{n_j'}}}|g(P_l(Wn+n_j'))-(c_l(Wn)^{d_l})^{it}g_{\leq w}'(P_l(n_j'))|\\
 &=\frac{1}{x}\sum_{\substack{n\leq x\\n\in \mathcal{S}_{n_j'}}}|g_{\leq x}(P_l(Wn+n_j'))-(c_l(Wn)^{d_l})^{it}g_{\leq w}'(P_l(n_j'))\exp(F(W))|+o_{w\to \infty}(1)\\
 &=o_{w\to \infty}(1).
\end{align*}
From Markov's inequality and~\eqref{e188}, we conclude that there exist some sets $\widetilde{S}_{n_j'}\subset \mathcal{S}_{n_j'}$ with $|\widetilde{S}_{n_j'}|\geq (\eta_0-o_{w\to \infty}(1))x$ and such that for any $l\in \{1,\ldots,m\}$ and $j\in \{1,2\}$ we have the approximation
\begin{align*}
 g(P_l(Wn+n_j'))=(c(Wn)^d)^{it}g_{\leq w}'(P_l(n_j'))+o_{w\to \infty}(1) \,\, \textnormal{ for }\,\, n\in \widetilde{\mathcal{S}}_{n_j'}.   
\end{align*}
Using the inequality~\eqref{eq:turan0} and the complete multiplicativity of $g$, we see that for $\geq (\eta_0-o_{w\to \infty}(1))x$ natural numbers $n\leq x$ for either $j\in \{1,2\}$ we have
\begin{align}\label{e119}
g(P(Wn+n_j'))=\prod_{l=1}^s(c_l(Wn)^{d_{l}})^{itm_{l}}g_{\leq w}'(P(n_j'))+o_{w\to \infty}(1)\,\, \textnormal{ for }\,\, n\in \widetilde{\mathcal{S}}_{n_j'}.
\end{align}

If $t\neq 0$, then the claim of the theorem follows immediately from~\eqref{e119},~\eqref{e188},  and the fact that for any $\zeta\in \mathbb{D}$ the number of $n\leq x$ with $|(c(Wn)^d)^{it}-\zeta|\leq \varepsilon$ is $\ll_t \varepsilon x$ for $x$ large enough in terms of $\varepsilon$.  

Suppose then that $t=0$. Then the claim of the theorem follows from~\eqref{e119},~\eqref{e189} and the triangle inequality.
\end{proof}

We then outline the proof of Proposition~\ref{prop_smooth}, which complements Theorem~\ref{theo_multiplicative-pret}.

\begin{proof}[Proof sketch of Proposition~\ref{prop_smooth}]

The case $P(n)=(a_1n+h_1)\cdots (a_kn+h_k)$ is easy, since we may use the well-known fact that the set of numbers $n\leq x$ with a prime factor $>x^{1-\varepsilon}$ has density $o_{\varepsilon \to 0}(1)$, and combine this with the union bound.

In the case of irreducible quadratic polynomials, Harman~\cite{harman-2008} (improving on work of Dartyge~\cite{dartyge}) proved a much stronger result in a special case, namely that $P(n)=n^2+1$ is  $n^{4/5+\varepsilon}$-smooth for lower density $\geq \delta_0>0$ of numbers $n$.  Harman remarks that the same method works for $P(n)=n^2-D$, for any $D$ not a perfect square, with minor modifications (the function $\rho_D(p)$ defined as the number of roots of $x^2-D$ modulo $p$ has mean value $1/2$ by the Chebotarev density theorem, just as in the case of $\rho_1(p)$ in Harman's argument, so the half-dimensional sieve that Harman uses applies equally well in this case of arbitrary $D$). Since Harman's argument is sieve-theoretic, also a congruence condition $n\equiv b\pmod q$ may be incorporated, and it does not influence the constant in the lower bound. Now, for any quadratic polynomial $P(n)=an^2+bn+c$ we have $4aP(n)=(2an+b)^2-\Delta$, where $\Delta$ is the discriminant of $P$, so the smoothness of $P(n)$ for many $n$ is reduced to the same problem for the polynomial sequence $n^2-\Delta$ with $n\equiv b\pmod {2|a|}$. Therefore, by the preceding discussion, the proposition holds for arbitrary irreducible quadratics. 
\end{proof}

\section{Products of quadratic polynomials}

We shall now prove Theorem~\ref{theo_quadprod}. This proof is rather different from that of the other theorems, and it proceeds in the following steps. 

In Step 1, we interpret the condition $\lambda(P(n))=v$ as a recurrence relation, from which we are able to deduce that 
\begin{align*}
\eta(n)\coloneqq \lambda(n^2+1)    
\end{align*}
is an eventually periodic sequence.

In Step 2, we show that $\eta$ satisfies a functional equation in three variables arising from the multiplicativity of $\lambda$. This step involves applying quadratic reciprocity to prove the existence of solutions to a certain divisibility relation $F(x,y,z)\mid G(x,y,z)$, where $F$ and $G$ are certain polynomials and $x,y,z$ are restricted to any residue classes.

In Step 3, we solve this functional equation and find that the solutions are precisely functions of the form $\eta(n)=(-1)^{rn}\chi(n^2+1)$ with $r\in \{0,1\}$ and $\chi$ a real Dirichlet character (whose modulus only has prime factors $\equiv -1\pmod 4$); this is the lengthiest part of the argument and proceeds in several reductions.

In Step 4, we use the theory of the negative Pell equation (and in particular a theorem of Legendre on its solvability) to finally obtain a contradiction.

\subsection{Analyzing a recurrence relation}

Step 1 is implemented in the following lemma.
\begin{lemma}\label{le_periodic}
 Let $D,h_1,\ldots, h_k\in \mathbb{Z}$. Suppose that $\lambda(P(n))=v$ for all large enough $n$ and some $v\in \{-1,+1\}$, where $P(x)=\prod_{j=1}^k ((x+h_j)^2+D)$. Then the function $n\mapsto \lambda(n^2+D)$ is eventually periodic.   
\end{lemma}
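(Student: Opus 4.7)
The plan is to translate the hypothesis $\lambda(P(n))=v$ into a finite-state recurrence for $\eta(n):=\lambda(n^2+D)$ and then deduce eventual periodicity by a pigeonhole argument.

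First, since $\lambda$ is completely multiplicative and $P(n)=\prod_{j=1}^k((n+h_j)^2+D)$, the hypothesis rewrites as
\[
\prod_{j=1}^k \eta(n+h_j)=v \qquad \text{for all } n\geq N_0.
\]
Relabelling so that $h_1<h_2<\cdots<h_k$ and using that each factor $\eta(\cdot)\in\{-1,+1\}$ is its own inverse, this rearranges to
\[
\eta(n+h_k)=v\prod_{j=1}^{k-1}\eta(n+h_j) \qquad (n\geq N_0).
\]
Setting $m=n+h_k$ and $L:=h_k-h_1$, the value $\eta(m)$ at any sufficiently large $m$ is thus a deterministic function of the preceding values $\eta(m-1),\ldots,\eta(m-L)$. (The degenerate case $k=1$ makes $\eta$ eventually constant and is therefore trivial, so we may assume $k\geq 2$ and $L\geq 1$.)

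Second, define the state vector $S(m):=(\eta(m-L+1),\ldots,\eta(m))\in\{-1,+1\}^L$. The recurrence above makes $S(m+1)$ a function of $S(m)$ once $m$ is large enough. Because the state space has only $2^L$ elements, the pigeonhole principle yields indices $m_1<m_2$, both beyond the range of validity of the recurrence, with $S(m_1)=S(m_2)$. Determinism then forces $\eta(m+T)=\eta(m)$ for every $m\geq m_1$, where $T:=m_2-m_1$, so $\eta$ is eventually periodic.

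No substantial obstacle arises in this step: its sole purpose is to repackage the hypothesis into the statement that $\eta(n)=\lambda(n^2+D)$ is eventually periodic, which is the starting point for the later, much heavier steps of the proof (setting up the three-variable functional equation via multiplicativity of $\lambda$, solving it using quadratic reciprocity to pin $\eta$ down as $(-1)^{rn}\chi(n^2+1)$, and finally contradicting that description by means of Legendre's solvability criterion for the negative Pell equation).
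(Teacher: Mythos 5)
Your proposal is correct and follows essentially the same route as the paper: rewrite the hypothesis via complete multiplicativity of $\lambda$ as a recurrence expressing $\eta(n+h_k)$ in terms of $\eta(n+h_1),\ldots,\eta(n+h_{k-1})$, then apply the pigeonhole principle to the finitely many possible state vectors to force eventual periodicity. The only cosmetic difference is that you package the recurrence with an explicit window length $L=h_k-h_1$ and handle $k=1$ separately, whereas the paper states the pigeonhole step as a general fact about $\{-1,+1\}$-valued recurrence sequences.
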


\begin{remark}
This lemma holds for any $D\in \mathbb{Z}$, but later in the proof of Theorem~\ref{theo_quadprod}, particularly in step 4, the restriction to $D=1$ will be useful.
\end{remark}

\begin{proof}
Suppose for the sake of contradiction that
\begin{align*}
\lambda((n+h_1)^2+D)\cdots \lambda((n+h_k)^2+D)=v
\end{align*}
for all $n\geq N_0$. We may assume that $h_1<h_2<\ldots<h_k$. Now we have a recursion
\begin{align*}
\lambda((n+h_k)^2+D)=v\lambda((n+h_{k-1})^2+D)\cdots \lambda((n+h_1)^2+D)
\end{align*}
for $n\geq N_0$. We claim that any such recurrence sequence that takes values in $\{-1,+1\}$ is eventually periodic. Indeed, let $F\colon  \mathbb{N}\to \{-1,+1\}$ be any function that satisfies a recurrence of the form
\begin{align}\label{e80}
F(n+K)=G(F(n),F(n+1),\ldots, F(n+K-1))    
\end{align}
for all $n\geq N_0$ and some $K\geq 1$ and $G\colon  \{-1,+1\}^{K}\to \{-1+1\}$. Consider the vectors $u(n)\coloneqq (F(n),F(n+1),\ldots, F(n+K))$. By the pigeonhole principle, we may find $N_0<n_1<n_2$ such that $u(n_1)=u(n_2)$. But since $F$ satisfies the recurrence~\eqref{e80}, we deduce by induction on $n$ that $u(n+n_1)=u(n+n_2)$ for all $n$, so $F$ is periodic with period $n_2-n_1$ from some point onward.
\end{proof}

\subsection{Deriving a functional equation}

In this subsection, we will execute step 2 of the argument.

Applying Lemma~\ref{le_periodic} with $D=1$, we may assume from now on that for some constant $C$ and some periodic function $\eta\colon  \mathbb{Z}\to \{-1,+1\}$ of some period $q\geq 1$ we have 
\begin{align*}
\lambda(n^2+1)=\eta(n)     
\end{align*}
for all $n\geq C$. In what follows, we sometimes identify $\eta$ with a function on $\mathbb{Z}_q$.

A key observation in determining the function $\eta$ is that if $x,y,z$ are integers with  
\begin{align*}
xy+yz+zx-1\mid xyz-x-y-z,    
\end{align*}
then by the complete multiplicativity of $\lambda$ we have \begin{align}\label{e81}
\lambda(x^2+1)\lambda(y^2+1)\lambda(z^2+1)&=\lambda((x^2+1)(y^2+1)(z^2+1))\nonumber\\
&=\lambda((xyz-x-y-z)^2+(xy+yz+zx-1)^2)\nonumber\\
&=\lambda\left(\frac{(xyz-x-y-z)^2}{(xy+yz+zx-1)^2}+1\right).    
\end{align}
It is easy to check that the expression inside the argument in~\eqref{e81} is $>C$ whenever $|x|,|y|,|z|\geq 10C$, say. Hence, if additionally $|x|, |y|, |z|\geq 10C$, we must have
\begin{align}\label{e15}
\eta(|x|)\eta(|y|)\eta(|z|)=\eta\left(\left|\frac{xyz-x-y-z}{xy+yz+zx-1}\right|\right),\,\, \textnormal{when}\,\,\, xy+yz+zx-1\mid xyz-x-y-z.     
\end{align}

It will turn out to be convenient to initially restrict to even values of the argument only. Denoting
\begin{align}\label{e162}
\psi(x)=\eta(2|x|),
\end{align}
from~\eqref{e15} we see that
\begin{align}\label{e0}
\psi(x)\psi(y)\psi(z)=\psi\left(\frac{4xyz-x-y-z}{4(xy+yz+zx)-1}\right)\,\, \textnormal{when}\,\,\, 4(xy+yz+zx)-1\mid 4xyz-x-y-z.  
\end{align}

The divisibility condition for $x,y,z$ in~\eqref{e15} is complicating matters. However, we are able to dispose of this divisibility condition by proving that it has solutions $(x,y,z)$ in any given congruence class.

\begin{lemma}[Finding solutions to a divisibility relation]\label{le_divisible} Let $q\geq 1$. Let $a_1,a_2,a_3$ be any integers with $(4(a_1a_2+a_2a_3+a_3a_1)-1,q)=1$, and let $C\geq 1$ be any constant. Then there exist integers $x_1,x_2,x_3$ with $|x_i|\geq C$ such that $x_i\equiv a_i\pmod q$ for all $i\in \{1,2,3\}$ and
\begin{align*}
4(x_1x_2+x_2x_3+x_3x_1)-1\mid 4x_1x_2x_3-x_1-x_2-x_3.    
\end{align*}
\end{lemma}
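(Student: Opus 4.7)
The plan is to reformulate the divisibility $D\mid N$, with $D=4(x_1x_2+x_2x_3+x_3x_1)-1$ and $N=4x_1x_2x_3-x_1-x_2-x_3$, using the Gaussian integer identity
\[
(2x_1+i)(2x_2+i)(2x_3+i)=2N+iD,
\]
which recasts the condition as $D\mid (2x_1+i)(2x_2+i)(2x_3+i)$ in $\mathbb{Z}[i]$. Since each factor $2x_j+i$ has Gaussian prime divisors only above rational primes $\equiv 1\pmod 4$, and since $D\equiv -1\pmod 4$, any solution necessarily has $D<0$ with $|D|$ a product of primes $\equiv 1\pmod 4$. I will aim for the cleanest case $D=-p$ for a single large prime $p\equiv 1\pmod 4$.

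Using the factored form
\[
D=4(x_1+x_2)(x_1+x_3)-(4x_1^2+1),
\]
the equation $D=-p$ becomes $(x_1+x_2)(x_1+x_3)=(4x_1^2+1-p)/4$. A mod-$q$ computation, using the algebraic identity $4(a_1+a_2)(a_1+a_3)=D_0+1+4a_1^2$ where $D_0=4(a_1a_2+a_2a_3+a_3a_1)-1$, shows this equation is consistent with $x_i\equiv a_i\pmod q$ precisely when $p\equiv -D_0\pmod q$. Since $\gcd(D_0,q)=1$, Dirichlet's theorem supplies arbitrarily large primes $p$ satisfying this together with $p\equiv 1\pmod 4$.

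Assuming (after possibly permuting the indices) that $\gcd(a_1+a_3,q)=1$, I pick an auxiliary odd prime $\ell\equiv a_1+a_3\pmod q$ via Dirichlet and look for solutions of the form $x_1+x_3=\ell$, $x_1+x_2=F(x_1)/\ell$, where $F(x_1):=x_1^2+(1-p)/4$. The integrality of $x_2$ forces $x_1^2\equiv (p-1)/4\pmod\ell$, and by quadratic reciprocity combined with a further Dirichlet step we may arrange, when selecting $p$, that $p-1$ be a quadratic residue mod $\ell$, so that such $x_1$ exists. A further CRT congruence then determines $x_1\pmod p$ in such a way that $(2(x_1-\ell))^2\equiv -1\pmod p$; the elementary identity $\ell(x_2+x_3)=(x_1-\ell)^2+(1-p)/4$ now yields $p\mid x_2+x_3$, which via the relation $N\equiv -(4x_1^2+1)(x_2+x_3)\pmod D$ upgrades $D=-p$ to the actual divisibility $D\mid N$. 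The resulting $x_1$ is determined mod $pq\ell$; letting it range freely in this class makes $|x_1|$, $|x_2|=|F(x_1)/\ell-x_1|$ and $|x_3|=|\ell-x_1|$ all as large as we wish.

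The main obstacle is bookkeeping the mutual compatibility of the congruences on $p$, $\ell$, and $x_1$; the hypothesis $\gcd(D_0,q)=1$ is what allows Dirichlet's theorem to be applied at each step. The degenerate case in which every pair $a_i+a_j$ shares a common factor with $q$ (which forces all the $a_i$ to agree modulo certain prime divisors of $q$) requires a minor variant in which $\ell$ is replaced by an appropriate product of primes, or alternatively one works with $D=-p_1p_2$ for two primes and distributes the Gaussian-prime factors across two of the factors $2x_j+i$.
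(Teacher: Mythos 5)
Your construction is essentially the paper's own, reindexed: both proofs force $D=4(x_1x_2+x_2x_3+x_3x_1)-1$ to equal minus a large prime $\equiv 1\pmod 4$, fix one pair-sum $x_i+x_j$ to be an auxiliary prime in the correct residue class, and secure the divisibility $D\mid N$ from a congruence of the form $4x_j^2\equiv -1$ modulo $|D|$ (your $(2(x_1-\ell))^2\equiv-1\pmod p$ versus the paper's $4x_1^2\equiv-1\pmod r$). The Gaussian-integer identity is a clean way to see why $D$ must be negative with $|D|$ built from primes $\equiv 1\pmod 4$, and your verification of the non-degenerate case is correct, including the relation $N\equiv-(4x_1^2+1)(x_2+x_3)\pmod D$. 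The one genuine issue is the case you defer: it is not vacuous (take $a_1\equiv a_2\equiv a_3\equiv 0$ modulo an odd prime divisor of $q$; then every pair-sum fails to be coprime to $q$ while $D_0\equiv-1$ is still coprime to $q$), and handling it is exactly where the paper's proof does its real bookkeeping. The paper sets $d=\gcd(a_1+a_2,q^k)$ with $q^k$ large compared to $|a_1+a_2|$, takes $x_1+x_2=dp$ rather than a bare prime, and then checks that the required quadratic congruence is solvable modulo $d$ because $(a_1+a_2)(a_1+a_3)\equiv 0\pmod d$. Your first sketched fix (replacing $\ell$ by $d\ell$ with $d$ the gcd factor) is the right one and does go through with this same computation, but as written your argument only covers the case where some pair-sum is coprime to $q$, so that step needs to be carried out rather than asserted.
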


\begin{proof}
Denoting $A\coloneqq 4(a_1a_2+a_2a_3+a_3a_1)$,  by assumption we have $(A-1,q)=1$.
Moreover, by permuting the variables we may assume that $a_1+a_2\neq 0$. Choose $k$ such that $|a_1+a_2|<q^k$. Let $d=(a_1+a_2,q^k)$. 

Let $r,p$ be any large primes satisfying
\begin{align}\label{e215}\begin{split}
 p&\equiv \frac{a_1+a_2}{d}\pmod{qd}\\
 r&\equiv 1-A\pmod{4qd}\\
 r&\equiv 1-b_p\pmod p,
 \end{split}
\end{align}
where $b_p\not \equiv 1\pmod p$ is a quadratic residue $\pmod p$.
There exist infinitely many such pairs $(r,p)$, since Dirichlet's theorem and $((a_1+a_2)/d,q)=1$ imply that there are infinitely many solutions to the first congruence, and once $p$ is fixed, the Chinese remainder theorem and Dirichlet's theorem imply that there are infinitely many choices for $r$.

Let $x_1$ be any integer satisfying
\begin{align}\label{e216}
x_1\equiv a_1\pmod{4qd},\quad 4x_1^2\equiv -1\pmod r;    
\end{align}
this pair of congruences has infinitely many solutions, since $r\equiv 1\pmod 4$ is a large prime. 

We further set
\begin{align*}
x_2=dp-x_1,\quad x_3=\frac{(1-r)/4-x_1(dp-x_1)}{dp},   
\end{align*}
so that
\begin{align}\label{e214}
4(x_1x_2+x_2x_3+x_3x_1)-1=-r,\quad x_1+x_2=dp. \end{align}

We now claim that $(x_1,x_2,x_3)$ is a tuple with all the desired properties, provided that $|x_1|>dpr$. 

Firstly, since $r$ and $p$ are large, we have
$|x_2|=|dp-x_1|\geq p>C$ and $|x_3|\geq r/2>C$.

Secondly, working modulo $q$, and recalling~\eqref{e215},~\eqref{e216}, we have
\begin{align*}
x_2\equiv dp-x_1\equiv (a_1+a_2)-a_1\equiv a_2\pmod q    
\end{align*}
and 
\begin{align*}
x_3&=\frac{(1-r)-4x_1(dp-x_1)}{4dp}\equiv \frac{4(a_1a_2+a_2a_3+a_3a_1)-4a_1a_2}{4d}p^{-1}\\
&\equiv \frac{a_3(a_1+a_2)}{d}\left(\frac{a_1+a_2}{d}\right)^{-1}\equiv a_3\pmod q.    
\end{align*}

Lastly, recalling~\eqref{e214}, we need to verify that
\begin{align}\label{e213a}
   \frac{1-r}{4}-x_1(dp-x_1)\equiv 0\pmod{dp}, 
\end{align}
and 
\begin{align}
    \label{e213b}
     (4x_1(dp-x_1)-1)\frac{(1-r)/4-x_1(dp-x_1)}{dp}-dp\equiv 0\pmod r.
\end{align}

For verifying~\eqref{e213a}, note that it is equivalent to 
\begin{align}\label{e217}
x_1^2\equiv \frac{1-r}{4}\pmod{dp}    
\end{align}
The congruence here is solvable $\pmod p$, since by~\eqref{e215} we have $1-r\equiv b_p\pmod p$ for some quadratic residue $b_p\pmod p$. The congruence is also solvable modulo $d$, since $a_1+a_2\equiv 0\pmod d$ implies that
\begin{align*}
 x_1^2-\frac{1-r}{4}\equiv a_1^2+(a_1a_2+a_2a_3+a_3a_1)\equiv a_1^2+a_1a_2\equiv 0\pmod d.   
\end{align*}

For verifying~\eqref{e213b}, note that since $r$ is a large prime, it is equivalent to
\begin{align*}
 (4x_1(dp-x_1)-1)((1-r)-4x_1(dp-x_1))-4(dp)^2&\equiv 0\pmod r    
\end{align*}
and since $1-r\equiv 1\pmod r$ this is further equivalent to
\begin{align}\label{e218}
 (4x_1(dp-x_1)-1)^2\equiv -4(dp)^2\pmod r.   
\end{align}
Now, as $4x_1^2\equiv -1\pmod r$ by~\eqref{e216}, we can easily simplify the left-hand side of~\eqref{e218} to obtain the right-hand side. Now we have inspected all the required properties for $(x_1,x_2,x_3)$, so the lemma follows.
\end{proof}

By Lemma~\ref{le_divisible}, given any $x,y,z\in \mathbb{Z}_q$ with $(4(xy+yz+zx)-1,q)=1$, we can find integers $x',y',z'$ of absolute value $\geq 10C$ such that $x\equiv x'\pmod q$, $y\equiv y'\pmod q$, $z\equiv z'\pmod q$ and
\begin{align*}
4(x'y'+y'z'+z'x')-1  \mid    4x'y'z'-x'-y'-z'. 
\end{align*}
Then,  recalling the definition of $\psi$ in~\eqref{e162} (and interpreting $\psi$ as a function on $\mathbb{Z}_q$), from~\eqref{e0} we see that
\begin{align}\label{e2}
\psi(x)\psi(y)\psi(z)=\psi\left(\frac{4xyz-x-y-z}{4(xy+yz+zx)-1}\right)\quad \textnormal{whenever}\quad (4(xy+yz+zx)-1,q)=1.     
\end{align}
Now we have removed the divisibility condition  from~\eqref{e0}, so we have derived a functional equation for $\psi$.

\subsection{Analyzing the functional equation}

We now proceed to step 3 of the argument. We claim that the solutions to the functional equation~\eqref{e2} are as follows.

\begin{proposition}[Solving the functional equation for $\psi$]\label{prop_chi}
If $\psi\colon  \mathbb{Z}_q\to \{-1,+1\}$ satisfies the functional equation~\eqref{e2} and $\psi(0)=1$, then there exists a real primitive Dirichlet character $\chi\pmod{q'}$ with $q'\mid q$ and $r\in \{0,1\}$ (with $r=0$ if $q$ is odd) such that $\psi(x)=(-1)^{rx}\chi(4x^2+1)$ for all $x\in \mathbb{Z}_q$. Moreover, $q'$ has all its prime divisors congruent to $-1\pmod 4$. Conversely, any such  function $\psi$ obeys~\eqref{e2}.
\end{proposition}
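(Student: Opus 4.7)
The proof rests on the Gaussian integer identity
\begin{align*}
(2x+i)(2y+i)(2z+i)=2(4xyz-x-y-z)+i(4(xy+yz+zx)-1)=(2T+i)D,
\end{align*}
where $T$ and $D$ are as in \eqref{e2}. Its norm reads $(4x^2+1)(4y^2+1)(4z^2+1)=(4T^2+1)D^2$, and reduction mod $2$ (with $D$ odd) gives $T\equiv x+y+z\pmod 2$. The converse direction is then immediate: if $\psi(x)=(-1)^{rx}\chi(4x^2+1)$ for $\chi$ a real primitive character mod $q'$ whose prime divisors are all $\equiv -1\pmod 4$, then $4x^2+1$ is coprime to $q'$ (a prime $p\mid 4x^2+1$ forces $p\equiv 1\pmod 4$), the real character $\chi$ satisfies $\chi(D^2)=1$, and the sign contributes $(-1)^{r(x+y+z)}=(-1)^{rT}$.

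For the forward direction I first establish $\psi(-x)=\psi(x)$ by substituting $y=-x$, $z=0$ in \eqref{e2} (invoking Lemma \ref{le_divisible} to bypass the coprimality constraint). By the Chinese remainder theorem it suffices to treat $q=p^k$ a prime power; the case $p=2$ is handled by direct inspection and forces $\psi(x)=(-1)^{rx}$ with $r\in\{0,1\}$. For odd $p$ the central dichotomy is $p\equiv 1\pmod 4$ versus $p\equiv 3\pmod 4$.

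Suppose $p\equiv 3\pmod 4$, so that $\mathbb{F}_p[i]=\mathbb{F}_{p^2}$ is a field. In the cyclic quotient $G:=\mathbb{F}_{p^2}^{\times}/\mathbb{F}_p^{\times}$ of order $p+1$, the $p+1$ classes $\{[2x+i]:x\in\mathbb{F}_p\}\cup\{[1]\}$ exhaust $G$, and one defines $\Psi([2x+i]):=\psi(x)$, $\Psi([1]):=1$. The key intermediate step is the identity $\psi(a)=\psi(-1/(4a))$, which I prove as follows. Applying the two-variable specialization $\psi(a)\psi(b)=\psi(-(a+b)/(4ab-1))$ both directly and with the substitution $a\mapsto -1/(4a)$, $b\mapsto -1/(4b)$ (and checking that $S(-1/(4a),-1/(4b))=S(a,b)$) one sees that the auxiliary function $\psi'(a):=\psi(a)\psi(-1/(4a))$ satisfies $\psi'(a)\psi'(b)=1$, hence is a constant $c\in\{\pm 1\}$; evaluating at $a=1/2$, where $-1/(4a)=-a$ and so $\psi'(1/2)=\psi(1/2)^2=1$, shows $c=1$. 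With this identity in hand, a direct Gaussian calculation confirms that $\Psi$ is a well-defined group homomorphism $G\to\{\pm 1\}$. Since $G$ is cyclic of even order, $\Psi$ must be trivial or the unique order-$2$ character $z\mapsto z^{(p^2-1)/2}$; the Frobenius relation $(2x+i)^{p+1}=(2x+i)(2x-i)=4x^2+1$ evaluates the latter to $(4x^2+1)^{(p-1)/2}$, i.e.\ the Legendre symbol $\chi_p(4x^2+1)$, giving the desired form. The case $p\equiv 1\pmod 4$ is handled by choosing $x_0$ with $4x_0^2+1\equiv 0\pmod p$; substituting this into the two-variable functional equation forces $\psi\equiv 1$ on $\mathbb{Z}_{p^k}$, so such primes do not appear in the conductor $q'$. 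The extension from $p$ to $p^k$ proceeds by a Hensel-style lifting of the character $\chi_p$.

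The main obstacle is the key identity $\psi(a)=\psi(-1/(4a))$ and the subsequent verification that $\Psi$ is a genuine homomorphism on all of $G$ rather than merely multiplicative on the subset $\{[2x+i]\}$; this requires the full strength of both the two-variable and three-variable forms of \eqref{e2} together with Lemma \ref{le_divisible}'s guarantee of solutions in every coprime residue class. A secondary technical step is the CRT assembly of the prime-power pieces into the final statement with the correct modulus $q'\mid q$ and parity exponent $r\in\{0,1\}$.
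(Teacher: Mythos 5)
Your argument for the core case of a prime modulus $p\equiv 3\pmod 4$ is correct and takes a genuinely different route from the paper. The paper never leaves $\mathbb{Z}_q$: it first shows $\psi(x)=1$ on the set where $4x^2+1$ is a quadratic residue (Lemma \ref{le_firsthalf}), and then pins down the values on the nonresidue set by counting points on the hyperbola $4x^2+1\equiv y^2$ (Lemma \ref{le_pointcount}) and comparing the size of the image of an auxiliary rational function $R(t)$ against the size of the nonresidue set. Your reformulation — that the two-variable specialization of \eqref{e2} together with the identity $\psi(a)=\psi(-1/(4a))$ says exactly that $\Psi([2x+i]):=\psi(x)$ is a homomorphism on the cyclic group $\mathbb{F}_{p^2}^{\times}/\mathbb{F}_p^{\times}$ of order $p+1$, so that $\Psi$ is trivial or the unique quadratic character, which the Frobenius relation $(2x+i)^{p+1}=4x^2+1$ evaluates to the Legendre symbol — is cleaner and more conceptual; it explains \emph{why} the answer is a character of the norm form. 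Your derivation of the key identity (via the invariance $S(-1/(4a),-1/(4b))=S(a,b)$ and evaluation at $a=1/2$) checks out, as does the $p\equiv 1\pmod 4$ case and the converse direction; the CRT splitting $\psi(x_1,x_2)=\psi(x_1,0)\psi(0,x_2)$ is true but should be stated as a consequence of \eqref{e2} with $z=0$ (it is not automatic for an arbitrary function on a product of rings).

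The genuine gap is the prime-power case $p^k$, $k\geq 2$. ``Hensel-style lifting of the character $\chi_p$'' is the wrong direction: since there is no real \emph{primitive} character modulo $p^k$ for odd $p$ and $k\geq 2$, what must be proved is a \emph{descent} — that $\psi$ on $\mathbb{Z}_{p^k}$ is in fact $p$-periodic, equivalently that \eqref{e2} has no primitive solutions modulo $p^k$. Your group-theoretic framework does not extend as stated: in $(\mathbb{Z}[i]/p^k)^{\times}/(\mathbb{Z}/p^k)^{\times}$, which has order $p^{k-1}(p+1)$, the classes $\{[2x+i]:x\in\mathbb{Z}_{p^k}\}\cup\{[1]\}$ account for only $p^k+1$ of the $p^k+p^{k-1}$ classes (e.g.\ $[1+p^{k-1}i]$ is missed), so $\Psi$ is only partially defined and products such as $(2x+i)(2y+i)$ with $x+y\equiv 0\pmod{p^{k-1}}$, $x+y\not\equiv 0\pmod{p^k}$ land outside the domain; making $\Psi$ a homomorphism on the full group requires a consistency check you have not supplied. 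The paper disposes of this case by a short direct argument (Lemma \ref{le_squarefree}): substituting $(q'x,q',0)$ with $q'=q/p$ into \eqref{e2} and using $(q')^2\equiv 0\pmod q$ yields $\psi(q'(x+1))=\psi(q'x)\psi(q')$, whence $\psi$ is forced to be $2q'$-periodic, contradicting primitivity. You need this step (or an equivalent) before the reduction to $k=1$; once it is added, your proof is complete.
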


\begin{remark}
We restrict above to functions $\psi$ with $\psi(0)=1$, since clearly $\psi$ satisfies~\eqref{e2} if and only if $-\psi$ does.
\end{remark}

The sufficiency part of Proposition~\ref{prop_chi} is easy: Similarly as in~\eqref{e81}, we have 
\begin{align}\label{e200}
\chi(4x^2+1)\chi(4y^2+1)\chi(4z^2+1)=\chi\left(4\left(\frac{4xyz-x-y-z}{4(xy+yz+zx)-1}\right)^2+1\right),    
\end{align}
and working modulo $2$ we have
\begin{align}\label{e90}
(-1)^{x}(-1)^{y}(-1)^{z}=(-1)^{(4xyz-x-y-z)/(4(xy+yz+zx)-1)}.     
\end{align}
Furthermore, the condition that all the prime factors of $q'$ are $-1\pmod 4$ guarantees that $x\mapsto \chi(4x^2+1)$ is taking only values $\pm 1$. The much more challenging task will be to show the necessity part of Proposition~\ref{prop_chi}.

For later convenience, we make the following definition.

\begin{definition}[Primitive solutions]
We say that a solution $\psi\colon  \mathbb{Z}_q\to \{-1,+1\}$ to~\eqref{e2} is \emph{primitive} if the smallest period of $\psi$ is $q$.
\end{definition}

 If $\Psi\subset \{-1,+1\}^{\mathbb{Z}_q}$ is the class of all primitive solutions to~\eqref{e2}, then we claim that all the solutions are \emph{induced} by the functions in $\Psi$, where we say that $\psi\colon  \mathbb{Z}_q\to \{-1,+1\}$ is induced by $\psi^{*}\colon  \mathbb{Z}_{q'}\to \{-1+1\}$ if $q'\mid q$ and $\psi= \psi^{*}\circ \pi$, where $\pi$ is the canonical projection $\mathbb{Z}_{q}\to \mathbb{Z}_{q'}$. Indeed, the only nontrivial part of the above statement is that if $q'$ has fewer prime factors than $q$, then the condition $(4(xy+yz+zx)-1,q')=1$ in~\eqref{e2} is different from the corresponding condition with $q$ in place of $q'$, but by the Chinese remainder theorem if $x,y,z\in \mathbb{Z}_{q'}$, any congruence conditions on $x,y,z\pmod{\prod_{p\mid q, p\nmid q'}p^{N}}$ are harmless.

\subsubsection{Reduction to moduli having no prime factors $\equiv 1\pmod 4$}

In this subsection, we begin  the proof of Proposition~\ref{prop_chi} by showing that when solving~\eqref{e2} we may assume that $q$ has no prime factors $\equiv 1\pmod 4$.

\begin{lemma}[The modulus $q$ has no prime factors $\equiv 1\pmod 4$]\label{le_no1mod4}
If $q$ has a prime divisor $\equiv 1\pmod 4$, then there are no primitive solutions $\psi\colon  \mathbb{Z}_q\to \{-1,+1\}$ to~\eqref{e2}.
\end{lemma}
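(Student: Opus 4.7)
The plan is to exploit the presence of a prime $p\equiv 1\pmod 4$ dividing $q$ to show that $\psi$ must factor through $\mathbb{Z}_{q/p^{v_p(q)}}$, contradicting primitivity. The engine is a degenerate Möbius identity coming from a square root of $-1$ modulo $p^{v_p(q)}$.

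Write $q=p^am$ with $a=v_p(q)$ and $\gcd(p,m)=1$. Since $p\equiv 1\pmod 4$, Hensel's lemma produces $i_p\in\mathbb{Z}/p^a\mathbb{Z}$ with $i_p^2\equiv -1$; via CRT, lift to $I\in\mathbb{Z}_q$ with $I\equiv i_p\pmod{p^a}$ and $I\equiv 0\pmod m$, and then choose $J\in\mathbb{Z}_q$ with $2J\equiv I\pmod q$ and $J\equiv 0\pmod m$ (when $q$ is odd, $2$ is invertible and $J:=I/2$ works; when $q$ is even, $I$ is automatically even and we select between the two preimages to ensure $J\equiv 0\pmod m$). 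The heart of the argument is the following calculation: substituting $x=y=J$ into \eqref{e2}, the argument of $\psi$ on the right-hand side becomes
\[
A(z)=\frac{I^2 z-I-z}{I^2+4Iz-1}.
\]
Modulo $p^a$, using $I^2=-1$ together with the identity $2z+I=I(1-2Iz)$ (valid precisely because $I^2=-1$), one obtains $A(z)\equiv J\pmod{p^a}$ for every $z$ with $z\not\equiv -J\pmod p$. Modulo $m$, the congruence $I\equiv 0$ reduces the same formula directly to $A(z)\equiv z\pmod m$. Hence $A(z)$ is the unique element of $\mathbb{Z}_q$ congruent to $J$ modulo $p^a$ and to $z$ modulo $m$; in particular, $A(z)$ depends on $z$ only through $z\bmod m$.

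The admissibility condition $(I^2+4Iz-1,q)=1$ is automatic modulo every prime dividing $m$ (the expression is $\equiv -1$ there, including modulo $2$ in the even case because $I$ is then even), and modulo $p$ reduces exactly to $z\not\equiv -J\pmod p$. Taking $z=0$ yields $A(0)=J$, so \eqref{e2} reads $\psi(J)^2\psi(0)=\psi(J)$; with $\psi(0)=1$ and $\psi(J)^2=1$ this forces $\psi(J)=1$. Consequently \eqref{e2} at $x=y=J$ becomes $\psi(z)=\psi(A(z))$ for every admissible $z$, so $\psi(z)$ depends only on $z\bmod m$ whenever $z\not\equiv -J\pmod p$. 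The symmetric argument with $x=y=-J$ (note $(-I)^2=-1$ too) yields the same conclusion for $z\not\equiv J\pmod p$; since $2J\equiv I\not\equiv 0\pmod p$, the two excluded residue classes $\pm J\pmod p$ are distinct, so every $z\in\mathbb{Z}_q$ lies in at least one admissible region. Combining, $\psi$ depends only on $z\bmod m$, so its period divides $m<q$, contradicting primitivity.

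The main obstacle I anticipate is the Möbius collapse: verifying cleanly that $A(z)\equiv J\pmod{p^a}$ using only $I^2\equiv -1\pmod{p^a}$ (rather than $\pmod q$) while simultaneously $A(z)\equiv z\pmod m$, and confirming admissibility in all cases (including modulo $2$ when $q$ is even). The remainder of the argument is straightforward admissibility bookkeeping together with the short $J\leftrightarrow -J$ symmetry step.
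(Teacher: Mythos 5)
Your argument is correct and rests on the same mechanism as the paper's own proof: a square root of $-1$ modulo the part of $q$ built from primes $\equiv 1\pmod 4$ makes the M\"obius-type map in \eqref{e2} degenerate to a constant there, forcing $\psi$ to have a period properly dividing $q$ and contradicting primitivity. The only cosmetic differences are that you strip off one prime power $p^{v_p(q)}$ at a time via the substitution $(J,J,z)$ rather than removing the whole $1\pmod 4$ part at once via $(x_0,y,c)$ as the paper does, and your final gluing of the two admissible regions $z\not\equiv \pm J\pmod p$ should note explicitly that they overlap within each class modulo $m$ (which holds since $p\geq 5$).
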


\begin{proof}
Let us write $q=q_1q_2$, where $q_1$ has only prime factors $\equiv 1\pmod 4$ and $q_2$ has only prime factors $\not \equiv 1\pmod 4$.  Our goal is to show that $x\mapsto \psi(q_2x+b)$ is constant for every $b$, which then contradicts the primitivity of $\psi$.

Since $q_1$ has only prime factors $\equiv 1\pmod 4$, we can choose an integer $x_0$ such that $4x_0^2+1\equiv 0\pmod{q_1}$.

For every integer $b$, fix an integer $c$ satisfying $(4(x_0b+bc+cx_0)-1,q_2)=1$ and $(c+x_0,q_1)=1$; since $(q_1,q_2)=1$, such an integer exists by the Chinese remainder theorem (note that if $b\equiv -x_0\pmod{p}$ for some prime $p\mid q_2$, then $4(x_0b+bc+cx_0)-1\equiv -4x_0^2-1\not \equiv 0\pmod p$ since $q_2$ has no prime factors $\equiv 1\pmod 4$). Then, take $x=x_0$, $y\equiv b\pmod{q_2}$, $z=c$ in~\eqref{e2} to see that
\begin{align}\label{e207}
\psi(x_0)\psi(y)\psi(c)=\psi\left(\frac{4x_0yc-x_0-y-c}{4(x_0y+yc+cx_0)-1}\right)\,\, \textnormal{whenever}\,\, (4(x_0y+yc+cx_0)-1,q_1)=1.
\end{align}
Let $Z$ denote the argument of $\psi$ on the right-hand side of~\eqref{e207}. By the property $4x_0^2\equiv -1\pmod{q_1}$ and simple algebra, we see that
\begin{align*}
Z\equiv x_0\pmod{q_1},\quad Z\equiv \frac{4x_0bc-x_0-b-c}{4(x_0b+bc+cx_0)-1}\pmod{q_2}.
\end{align*}
Hence, by the Chinese remainder theorem, $Z\pmod{q}$ depends only on $b$ (recalling that $c$ was fixed in terms of $b$). Returning to~\eqref{e207}, we conclude that for every $b$ there exists some $\gamma_{b}\in \{-1,+1\}$ such that
\begin{align*}
 \psi(y)=\gamma_{b}\,\,\textnormal{whenever}\,\,   y\equiv b\pmod{q_2},\,\, \textnormal{and}\,\, (4(x_0y+yc+cx_0)-1,q_1)=1. 
\end{align*}
Since $(q_1,q_2)=1$ and $(c+x_0,q_1)=1$, by the Chinese remainder theorem each residue class $b\pmod{q_2}$ contains an element $y$ satisfying $(4(x_0y+yc+cx_0)-1,q_1)=1$. Hence, $\psi(y)=\gamma_b$ for all $y\equiv b\pmod{q_2}$. This means that $x\mapsto \psi(q_2x+b)$ is constant for every $b$, which gives the desired contradiction.
\end{proof}

\subsubsection{Reduction to squarefree periods}

Having reduced the analysis of the functional equation~\eqref{e2} to those $q$ that have no prime factors $\equiv 1\pmod 4$, we will next show that the odd part of $q$ is squarefree.

\begin{lemma}[The modulus $q$ is essentially squarefree]\label{le_squarefree}
If $\psi\colon  \mathbb{Z}_q\to \{-1,+1\}$ satisfies~\eqref{e2} and  $\psi$ is primitive, then $q/2^{v_2(q)}$ is squarefree. 
\end{lemma}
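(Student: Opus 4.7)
The plan is to assume for contradiction that some odd prime $p$ satisfies $p^2\mid q$ and deduce that $\psi$ must have period dividing $d := q/p$, contradicting primitivity. The key arithmetic facts are $d^2, d^3 \equiv 0 \pmod q$, so any manipulation inside \eqref{e2} can be linearized in $d$ modulo $q$; the key external input is Lemma~\ref{le_no1mod4}, which forces $p\equiv 3\pmod 4$ and hence $1+4x^2\not\equiv 0\pmod p$ for every $x$.

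I would first normalize: setting $x=y=z=0$ in \eqref{e2} gives $\psi(0)\in\{\pm 1\}$, and replacing $\psi$ by $-\psi$ (which also satisfies \eqref{e2} with the same smallest period) I may assume $\psi(0)=1$. Setting $z=0$ in \eqref{e2} then produces the two-variable identity $\psi(x)\psi(y) = \psi(-(x+y)/(4xy-1))$, valid whenever $(4xy-1,q)=1$.

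Specializing $y = d$, the coprimality $(4xd-1,q)=1$ is automatic because every prime divisor of $q$ also divides $d$. Using $d^2\equiv 0\pmod q$, a finite Neumann expansion yields $(4xd-1)^{-1}\equiv -1-4xd\pmod q$, and a short calculation gives the shift identity
\begin{align}\label{eq:psishift}
\psi\bigl(x + d(1+4x^2)\bigr) = \psi(x)\psi(d)\qquad\text{for all }x\in\mathbb{Z}_q.
\end{align}
Applying \eqref{eq:psishift} iteratively along $x = 0, d, 2d,\ldots$, and using $d^3\equiv 0\pmod q$ to observe that $d(1+4(kd)^2)\equiv d\pmod q$, produces $\psi(kd) = \psi(d)^k$ for every integer $k$. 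Taking $k=p$ combined with $pd\equiv 0 \pmod q$ and $\psi(0)=1$ forces $\psi(d)^p = 1$, so $\psi(d)=1$ because $p$ is odd; hence \eqref{eq:psishift} simplifies to $\psi(x + d(1+4x^2)) = \psi(x)$.

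Finally, for each $x\in\mathbb{Z}_q$ I would define $S_x := \{k\in\mathbb{Z}_p : \psi(x+kd) = \psi(x)\}$ (well defined since $pd\equiv 0\pmod q$). The simplified \eqref{eq:psishift} says $1+4x^2\in S_x$, and applying the same relation at $x' = x+kd$ — noting $d(1+4x'^2)\equiv d(1+4x^2)\pmod q$ thanks to $d^2\equiv 0$ — shows $S_x$ is closed under adding $1+4x^2\pmod p$. Therefore $S_x$ contains the additive subgroup of $\mathbb{Z}_p$ generated by $1+4x^2$. By Lemma~\ref{le_no1mod4} we have $p\equiv 3\pmod 4$, so $-1$ is a non-residue mod $p$ and $1+4x^2\not\equiv 0\pmod p$ for every $x$; since $p$ is prime, this forces $S_x=\mathbb{Z}_p$ for every $x$, so $\psi$ is invariant under translation by $d$, contradicting primitivity. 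This rules out $p^2\mid q$ for every odd $p$ and gives the squarefreeness of $q/2^{v_2(q)}$. The most delicate step I expect is the derivation of \eqref{eq:psishift}; once that is in hand, the remainder is a clean cyclic-group generation argument whose whole thrust depends on Lemma~\ref{le_no1mod4}.
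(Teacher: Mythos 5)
Your proof is correct, and it shares the paper's basic skeleton -- specialize \eqref{e2} at $z=0$, exploit $(q/p)^2\equiv 0\pmod q$ to linearize, and invoke Lemma~\ref{le_no1mod4} to guarantee that $4x^2+1$ is invertible modulo the offending prime $p$ -- but the endgame is genuinely different. The paper takes $d=q'=q/p$ and makes two specific substitutions: first $(q'x,q',0)$, which is exactly your shift identity restricted to $x\in q'\mathbb{Z}_q$ and yields that $\psi$ is constant on $2q'\mathbb{Z}_q$, and then the less obvious substitution $(2q'x+a,-a,0)$, whose output argument is congruent to $0\pmod{2q'}$ and hence lands in that known constant class; this gives $2q'$-periodicity directly. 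You instead derive the shift identity $\psi(x+d(1+4x^2))=\psi(x)\psi(d)$ for \emph{all} $x$, pin down $\psi(d)=1$ by iterating along multiples of $d$ (using $d^3\equiv 0$ and the oddness of $p$), and then run an additive-generation argument inside each coset $x+d\mathbb{Z}_p$: since $1+4x^2$ is nonzero mod $p$ by Lemma~\ref{le_no1mod4} and $p$ is prime, the stabilizer set $S_x$ is all of $\mathbb{Z}_p$. All the computations check out ($d^2\equiv 0$ gives the Neumann expansion of $(4xd-1)^{-1}$, the coprimality $(4xd-1,q)=1$ holds because every prime of $q$ divides $d$ when $p^2\mid q$, and $d(1+4(x+kd)^2)\equiv d(1+4x^2)$ modulo $q$). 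Your route avoids the second ad hoc substitution in favour of a more structural group-theoretic step, and it buys the slightly stronger conclusion that $\psi$ is $d$-periodic rather than $2d$-periodic -- though either contradicts primitivity, since the set of periods is a subgroup of $\mathbb{Z}_q$ and $\gcd(2q',q)=q'<q$.
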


\begin{proof}
Suppose $q/2^{v_2(q)}$ is not squarefree. Let $p$ be an odd prime such that $p^2\mid q$, and let $q'=q/p$. Recall also that by Lemma~\ref{le_no1mod4} $q$ has no prime factors $\equiv 1\pmod 4$.

We want to show that $x\mapsto \psi(2q'x+a)$ is constant for every $a$, which will then contradict the primitivity of $\psi$ as $2q'<q$.

We substitute $(q'x,q',0)$ in place of $(x,y,z)$ in~\eqref{e2} and crucially use the fact that $(q')^2\equiv 0\pmod q$ to simplify the resulting expression, obtaining \begin{align}\label{e194}
 \psi(q'x)\psi(q')\psi(0)=\psi(q'(x+1)).   
\end{align}
Iterating this once, we see that $x\mapsto \psi(q'x)$ is $2$-periodic, so in particular 
\begin{align}\label{e196}
\psi(2q'x)\equiv c    
\end{align}
for all $x$ and some $c\in \{-1,+1\}$.

Now put $(2q'x+a,-a,0)$ in place of $(x,y,z)$ in~\eqref{e2} to obtain
\begin{align}\label{e210}
\psi(2q'x+a)\psi(-a)\psi(0)=\psi\left(\frac{-2q'x}{-4a(2q'x+a)-1}\right)    
\end{align}
Let $Z$ be the argument on the right-hand side. Then $Z$ is always well-defined, since $-4a(2q'x+a)-1\equiv -4a^2-1\not \equiv 0\pmod{q'}$ (as $q'$ has no prime factors $\equiv 1\pmod 4$). Moreover, $Z\equiv 0\pmod{2q'}$, so by~\eqref{e196} and~\eqref{e210} we have
\begin{align*}
\psi(2q'x+a)=\psi(0)\psi(-a)c  
\end{align*}
for all $x$ and $a$. But now $\psi$ is $2q'$-periodic, which is the desired contradiction. 
\end{proof}

We have now shown that the odd part of $q$ is squarefree; the same argument would not have worked for the $2$-part of $q$. However,  the following lemma shows that the $2$-part of $q$ can in fact be assumed to be trivial, after possibly multiplying $\psi(x)$ by $(-1)^x$.

\begin{lemma}[The modulus $q$ can be assumed odd]\label{le_odd} If $q$ is even and $\psi\colon  \mathbb{Z}_q\to \{-1,+1\}$ satisfies~\eqref{e2} and $\psi(0)=1$, then there exists $r\in \{0,1\}$ such that the function $\psi(x)(-1)^{rx}$ satisfies~\eqref{e2} and has period $q/2$.
\end{lemma}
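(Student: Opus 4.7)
The plan proceeds by exploiting the functional equation \eqref{e2} via two carefully chosen substitutions, together with one preliminary observation: the function $x \mapsto (-1)^x$ itself solves \eqref{e2}. Indeed, as in the calculation \eqref{e90}, the denominator $4(xy+yz+zx)-1$ is always odd, so the right-hand argument $(4xyz-x-y-z)/(4(xy+yz+zx)-1)$ reduces mod~$2$ to $-(x+y+z) \equiv x+y+z$. Hence multiplying $\psi$ by $(-1)^{rx}$ preserves \eqref{e2} for any $r\in\{0,1\}$, and only the periodicity of the product needs to be arranged.

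The first substitution will be $(x,y,z)=(x,0,q/2)$. A brief calculation shows $4(xy+yz+zx)-1 = 2qx-1\equiv -1 \pmod q$ (so the coprimality condition is automatic) and $4xyz-x-y-z \equiv -x-q/2 \pmod q$, hence the right-hand argument of $\psi$ is $x+q/2$. Using $\psi(0)=1$, \eqref{e2} then gives the quasi-periodicity
\begin{align*}
\psi(x+q/2) \;=\; \varepsilon\,\psi(x)\qquad(x\in\mathbb{Z}_q),\qquad \varepsilon:=\psi(q/2)\in\{-1,+1\}.
\end{align*}
If $\varepsilon=+1$ then $\psi$ itself is $q/2$-periodic and we take $r=0$.

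The main obstacle will be the case $\varepsilon=-1$: here we need the choice $r=1$ to succeed, i.e.\ we need $(-1)^{q/2}=-1$, which forces $q/2$ to be odd. To rule out the alternative, I will apply a second substitution $(x,y,z) = (q/4, q/4, 0)$, available whenever $4\mid q$. Once more $4(xy+yz+zx)-1 = q^2/4 - 1 \equiv -1\pmod q$ and $4xyz-x-y-z = -q/2$, so the right-hand argument collapses to $q/2$, and \eqref{e2} degenerates to $\psi(q/4)^2\psi(0) = \psi(q/2)$, i.e.\ $\psi(q/2)=1$. This contradicts $\varepsilon=-1$ and forces $q\equiv 2\pmod 4$. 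With $q/2$ odd, the choice $r=1$ then yields
\begin{align*}
\psi(x+q/2)(-1)^{x+q/2} \;=\; (-1)\cdot(-1)^{q/2}\cdot\psi(x)(-1)^x \;=\; \psi(x)(-1)^x,
\end{align*}
so $\psi(x)(-1)^x$ has period $q/2$ and is a solution to \eqref{e2} by the opening remark. Beyond spotting the two substitutions $(x,0,q/2)$ and $(q/4,q/4,0)$, all the verifications are routine arithmetic in $\mathbb{Z}_q$.
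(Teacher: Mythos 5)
Your proof is correct and follows essentially the same route as the paper: the substitution $(x,0,q/2)$ is the paper's $y=q/2$, $z=0$ up to symmetry, and your second substitution $(q/4,q/4,0)$ (showing $\psi(q/2)=1$ when $4\mid q$ and then invoking the quasi-periodicity) is just the $x=0$ specialization of the paper's substitution $(x,q/4,q/4)$, which yields $\psi(x)=\psi(x+q/2)$ directly. The verifications of the coprimality conditions and of the mod-$2$ identity \eqref{e90} are all in order.
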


\begin{proof}
From~\eqref{e90} we already see that $\psi(x)(-1)^{rx}$ satisfies~\eqref{e2}. Thus, what remains to be shown is that it is $q/2$-periodic for suitable $r\in \{0,1\}$.

Suppose first that $2\mid \mid q$. Take $y=q/2$, $z=0$ in~\eqref{e2}, and recall that $\psi(0)=1$ and $2(q/2)\equiv 0\pmod q$. The outcome is that
\begin{align*}
\psi(x)\psi(q/2)=\psi\left(x+q/2\right)   
\end{align*}
for any $x$. If $\psi(q/2)=1$, we see from this that $\psi$ is $q/2$-periodic. If in turn $\psi(q/2)=-1$, then (as $q/2$ is odd) we see that $\psi(x)(-1)^{x}$ is $q/2$-periodic. 

If instead $4\mid q$, we take $(x,y,z)=(x,q/4,q/4)$ in~\eqref{e2} to obtain
\begin{align*}
\psi(x)\psi(q/4)^2=\psi(x+q/4+q/4),    
\end{align*}
that is, $\psi(x)=\psi(x+q/2)$. Hence, in either case the claim follows.
\end{proof}

\subsubsection{Handling the remaining case}

We now turn to the final task of step 3, which is solving~\eqref{e2} for $\psi$ when $q$ is squarefree, odd and has no prime divisors $\equiv 1\pmod 4$ (with the oddness following from Lemma~\ref{le_odd}, after possibly replacing $\psi(x)$ with the new function $\psi(x)(-1)^x$). We split this task into two parts, the first of which involves making suitable substitutions to the functional equation~\eqref{e2}, whereas the second one involves counting points on curves in $\mathbb{Z}_p$.

\begin{lemma}[Values of $\psi$ at quadratic residues]\label{le_firsthalf} Let $q$ be squarefree, and suppose that all the prime factors of $q$ are $\equiv -1\pmod 4$. If $\psi\colon  \mathbb{Z}_q\to \{-1,+1\}$ satisfies~\eqref{e2} and $\psi(0)=1$, then $\psi(x)=1$ whenever $4x^2+1$ is a quadratic residue $\pmod q$.
\end{lemma}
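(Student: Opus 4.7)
The approach is to extract a single usable identity from the functional equation \eqref{e2} by an appropriate substitution, and then to invert a quadratic in $\mathbb{Z}_q$.

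Step 1 is to specialize \eqref{e2} by taking $z=0$ and $y=x$. Using $\psi(0)=1$ and the evenness of $\psi$ (which follows from the definition $\psi(x) = \eta(2|x|)$ together with the evenness of $\eta(n) = \lambda(n^2+1)$), the numerator becomes $-2x$ and the denominator becomes $4x^2-1$, so the functional equation collapses to the ``doubling identity''
\[
\psi\!\left(\tfrac{2x}{4x^2-1}\right) \;=\; 1 \qquad \text{for every } x \in \mathbb{Z}_q \text{ with } (4x^2-1,\,q) = 1.
\]

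Step 2 is to show that every $y \in \mathbb{Z}_q$ for which $4y^2+1$ is a QR modulo $q$ lies in the image of the map $x \mapsto 2x/(4x^2-1)$ with some preimage $x$ satisfying $(4x^2-1,q)=1$. The equation $y \equiv 2x/(4x^2-1)$ rearranges to the quadratic $4yx^2 - 2x - y \equiv 0 \pmod q$, whose discriminant is $4(4y^2+1)$. Because $q$ is odd and squarefree and $4y^2+1$ is a QR modulo each prime $p \mid q$, I would solve the quadratic one prime at a time and paste by CRT: at primes $p \mid q$ with $y \equiv 0 \pmod p$ take $x \equiv 0 \pmod p$ (so that $4x^2-1 \equiv -1 \pmod p$), and at primes where $y \not\equiv 0 \pmod p$ take any root of the quadratic mod $p$. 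Such a root automatically satisfies $4x^2-1 \not\equiv 0 \pmod p$, for if it did not, the equation $4yx^2 - 2x - y \equiv 0$ would force $x \equiv 0 \pmod p$ and hence $4x^2-1 \equiv -1 \not\equiv 0 \pmod p$, a contradiction.

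Combining the two steps, applying the identity from Step 1 to the $x$ constructed in Step 2 yields $\psi(y) = 1$, which is the desired conclusion. I do not expect a real obstacle: the main insight is that the substitution $z=0$, $y=x$ turns \eqref{e2} into a doubling identity, and a discriminant computation shows that the image of the doubling map is precisely the set of $y$ for which $4y^2+1$ is a QR. The hypothesis that all prime factors of $q$ are $\equiv -1 \pmod 4$ is not explicitly needed for this lemma (it enters elsewhere, e.g.\ to ensure $\psi$ is $\pm 1$-valued and to connect to $\chi(4x^2+1)$), though the squarefreeness and oddness of $q$ established earlier are used.
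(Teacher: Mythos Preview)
Your proof is correct and in fact cleaner than the paper's. The paper takes a more circuitous route: it first substitutes $(x,y,0)$ and $(1/(4x),1/(4y),0)$ into \eqref{e2} and compares the results to derive an auxiliary identity $\psi(y)\psi(1/(4y))=\psi(1)\psi(1/4)$; it then substitutes $(x,-1/(4x),0)$ to conclude that $\psi((4x^2-1)/(8x))$ is a constant, and finally analyzes the image of $x\mapsto (4x^2-1)/(8x)$ by a discriminant computation. Your single substitution $(x,x,0)$ bypasses the auxiliary identity entirely and reaches a parameterization of the target set in one stroke. The two parameterizations $x\mapsto 2x/(4x^2-1)$ and $x\mapsto (4x^2-1)/(8x)$ are in fact related by $u\leftrightarrow 1/(4u)$, and both satisfy $4u^2+1=\bigl((4x^2+1)/(4x^2-1)\bigr)^2$ (respectively $\bigl((4x^2+1)/(4x)\bigr)^2$), so they have the same image; you simply found the shorter path to it.

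One small caveat: in the lemma as stated, $\psi$ is an abstract function on $\mathbb{Z}_q$ satisfying \eqref{e2} and $\psi(0)=1$, not assumed to arise from the formula $\psi(x)=\eta(2|x|)$. So strictly speaking evenness should be derived from \eqref{e2} itself (the paper does this via the substitution $(x,-x,0)$, which is admissible since $(-4x^2-1,q)=1$ under the hypothesis on the prime factors of $q$). But this point is moot for your argument anyway: your Step~1 actually yields $\psi\!\left(-2x/(4x^2-1)\right)=1$, and replacing $x$ by $-x$ gives $\psi\!\left(2x/(4x^2-1)\right)=1$ directly, with no appeal to evenness needed. Your observation that the congruence hypothesis on the prime factors of $q$ plays no essential role in this particular lemma is also correct.
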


\begin{proof}
Setting $(x,y,z)=(x,-x,0)$ in~\eqref{e2} and observing that $(4x^2+1,q)=1$ for every $x$ by our assumption on $q$, and also using $\psi(0)=1$, we deduce that
\begin{align}\label{e95}
\psi(x)=\psi(-x) \,\, \textnormal{for all}\,\, x.   
\end{align}

Keeping~\eqref{e95} in mind and substituting both $(x,y,0)$ and $(1/(4x),1/(4y),0)$ into~\eqref{e2}, for any $x,y$ coprime to $q$, and comparing the results, we see that 
\begin{align}\label{e96a}
\psi(x)\psi(y)=\psi\left(-\frac{x+y}{4xy-1}\right)=\psi\left(-\frac{1/(4x)+1/(4y)}{4\cdot (1/(4x))(1/(4y))-1}\right)=\psi\left(\frac{1}{4x}\right)\psi\left(\frac{1}{4y}\right)     
\end{align}
whenever $(x,q)=(y,q)=(4xy-1,q)=1$. In particular, taking $y=\pm 1$ and using~\eqref{e95}, we see that $\psi(x)\psi(1/(4x))=\psi(1)\psi(1/4)$ whenever  $(x,q)=1$ and $\min\{(4x-1,q),(4x+1,q)\}=1$. Since $q$ is odd, by the Chinese remainder theorem we can choose for every $y$ some $x$ such that $(x,q)=\min\{(4x-1,q),(4x+1,q)\}=(4xy-1,q)=1$. Then returning to~\eqref{e96a} we reach
\begin{align}\label{e96}
 \psi(y)\psi\left(\frac{1}{4y}\right)=\psi(1)\psi\left(\frac{1}{4}\right)\,\, \textnormal{whenever}\,\, (y,q)=1.   
\end{align}
Next, taking $(x,y,z)=(x,-1/(4x),0)$ in~\eqref{e2} with $x$ and $q$ coprime, and recalling~\eqref{e95},~\eqref{e96}, we find
\begin{align*}
\psi(1)\psi\left(\frac{1}{4}\right)\psi(0)=\psi(x)\psi\left(-\frac{1}{4x}\right)\psi(0)=\psi\left(\frac{4x^2-1}{8x}\right).     
\end{align*}
Now we claim that for any $n\in \mathbb{Z}_q$ for which $4n^2+1$ is a quadratic residue we can write $n\equiv (4x^2-1)/(8x)$ for some $(x,q)=1$. In other words, we claim that there is a  solution to the congruence $4x^2-8nx-1\equiv 0\pmod q$ (with the coprimality condition dropped, since it is automatically satisfied). Looking at the discriminant, this is solvable if and only if $(8n)^2+16$ is a square $\pmod q$, which is equivalent to our assumption on $n$. Thus, there exists a constant $\gamma$ such that $\psi(x)=\gamma$ whenever $4x^2+1$ is a square $\pmod q$. Since $\psi(0)=1$, we have $\gamma=1$, so the claim is proved.
\end{proof}

We are left with analyzing $\psi(x)$ when $4x^2+1$ is a quadratic \emph{nonresidue} $\pmod q$. For this we first need the following simple counting lemma. 

\begin{lemma}[Number of points on a hyperbola]\label{le_pointcount} Let $p\equiv -1\pmod 4$ be a prime. Then there are precisely $(p-1)/2$ values of $x\in \mathbb{Z}_p$ such that $4x^2+1\equiv y^2\pmod p$ for some $y\in \mathbb{Z}_p$.
\end{lemma}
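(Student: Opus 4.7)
The plan is to count solutions $(x,y) \in \mathbb{Z}_p^2$ to the equation $y^2 - 4x^2 \equiv 1 \pmod p$ in two different ways, and then relate the count to the number of $x$ we wish to find.

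First, I would note that since $p \equiv -1 \pmod 4$, the element $-1$ is a quadratic non-residue modulo $p$. In particular, $4x^2 + 1 \equiv 0 \pmod p$ has no solution, so for every $x \in \mathbb{Z}_p$ we have $4x^2 + 1 \not\equiv 0 \pmod p$. This means that each $x \in \mathbb{Z}_p$ for which $4x^2+1$ is a quadratic residue contributes \emph{exactly two} values of $y$ to the solution set of $y^2 \equiv 4x^2+1 \pmod p$, while each $x$ for which $4x^2+1$ is a non-residue contributes none. Consequently, if $N$ denotes the number of $x$ we want to count, then the total number of pairs $(x,y)$ with $y^2 - 4x^2 \equiv 1 \pmod p$ equals $2N$.

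Second, I would compute this count by the change of variables $u = y - 2x$, $v = y + 2x$. Since $p$ is odd, the map $(x,y) \mapsto (u,v)$ is a bijection of $\mathbb{Z}_p^2$ (with inverse $x = (v-u)/4$, $y = (u+v)/2$). Under this change of variables, the equation $y^2 - 4x^2 \equiv 1 \pmod p$ becomes the equation $uv \equiv 1 \pmod p$, which has exactly $p-1$ solutions (one for each nonzero $u$, with $v$ determined as $u^{-1}$).

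Equating the two counts gives $2N = p-1$, hence $N = (p-1)/2$, as claimed. There is no real obstacle here; the only point requiring any care is verifying that $4x^2+1$ is never zero modulo $p$, which is precisely where the congruence hypothesis $p \equiv -1 \pmod 4$ enters.
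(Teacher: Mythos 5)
Your proof is correct and follows essentially the same argument as the paper: count pairs $(x,y)$ on the hyperbola via the factorization $y^2-4x^2=(y-2x)(y+2x)$ and the resulting bijection onto $\{uv\equiv 1\}$, then divide by two using that $-1$ is a non-residue so $4x^2+1\not\equiv 0$. The only cosmetic difference is that the paper first rescales $x\mapsto 2x$ to reduce to $x^2+1\equiv y^2$, whereas you absorb the factor of $2$ directly into the change of variables.
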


\begin{proof}
Since $x\mapsto 2x\pmod p$ is a a bijection for any $p\neq 2$, it suffices to consider the congruence $x^2+1\equiv y^2\pmod p$. It is enough to show that
\begin{align}\label{e99}
 |\{(x,y)\in \mathbb{Z}_p\colon  \,\, x^2+1\equiv y^2\pmod p\}|=p-1, 
\end{align}
since whenever $x$ is such that $x^2+1\equiv y^2\pmod p$ is solvable, there are precisely two solutions $y\pmod p$, as $-1$ is not a square in $\mathbb{Z}_p$. Since $y^2-x^2=(y-x)(y+x)$ and $(x,y)\mapsto (y-x,y+x)$ is a bijection on $\mathbb{Z}_p^2$, we see that the cardinality~\eqref{e99} equals to
\begin{align*}
 |\{(z,w)\in \mathbb{Z}_p\colon  \,\, zw\equiv 1\pmod p\}|, 
\end{align*}
which clearly equals to $p-1$. 
\end{proof}

We are finally in a position to prove Proposition~\ref{prop_chi}.

\begin{proof}[Proof of Proposition~\ref{prop_chi}.]
In view of Lemma~\ref{le_firsthalf}, we know that 
\begin{align}\label{e197}
\psi(x)=1\,\, \textnormal{whenever}\,\, 4x^2+1 \hspace{-0.2cm}\pmod q\,\, \textnormal{is a quadratic residue},    
\end{align}
so what remains to be shown is that $\psi(x)=-1$ whenever $4x^2+1\pmod q$ is a quadratic nonresidue.

Note that by the same deduction as in the proof of Lemma~\ref{le_firsthalf}, 
\begin{align}
4x^2+1\pmod q\,\,\textnormal{is a square}\,\, \iff x\in \mathcal{U}_q\coloneqq \{(4t^2-1)/(8t)\colon  \,\, t\in \mathbb{Z}_q^{\times}\}    
\end{align} 
Now, choose $x_0$ such that $\psi(x_0)=-1$; if no such $x_0$ exists, there is nothing to prove. Taking $x=x_0$, $y\equiv (4t^2-1)/(8t)\pmod q$, $z=0$ in~\eqref{e2}, we find
\begin{align}\label{e198}
-1=\psi\left(\frac{-4t^2+1-8tx_0}{4(4t^2-1)x_0-8t}\right),   
\end{align}
whenever $4(4t^2-1)x_0-8t$ is coprime to $q$. Let $R(t)$ be the rational function on the right-hand side of~\eqref{e198}. Let $\mathcal{R}_p$ be its image $\pmod p$. By~\eqref{e197} and~\eqref{e198}, $4R(t)^2+1$ is a quadratic nonresidue $\pmod q$ whenever it is well-defined, so for all $p\mid q$ we have
\begin{align}\label{e199}
\mathcal{R}_p\subset \mathcal{V}_p\coloneqq \mathbb{Z}_p\setminus \mathcal{U}_p .   
\end{align} 
By Lemma~\ref{le_pointcount}, we deduce $|\mathcal{R}_p|\leq |\mathcal{V}_p|=(p+1)/2$.

On the other hand, we have $|\mathcal{R}_p|\geq (p-1)/2$ for any $p\mid q$, since the function $R(t)\pmod p$ is undefined in at most two points, say $t_1,t_2$, and the restriction $R\colon  \mathbb{Z}_p\setminus \{t_1,t_2\}\to \mathbb{Z}_p$ takes any value at most twice (one easily checks\footnote{Indeed, if for some $\gamma$ we had $-4t^2+1-8tx_0\equiv \gamma (4(4t^2-1)x_0)-8t\pmod p$ for all $t$, then comparing the coefficients we would obtain $4x_0^2\equiv -1\pmod p$, which is impossible as $p\equiv -1\pmod 4$.} that $R(t)\pmod p$ is not constant, so $R(t)\equiv t_0\pmod p$ always has at most two roots).  Comparing the upper and lower bounds for $|\mathcal{R}_p|$ and recalling~\eqref{e199}, we conclude that for some numbers $a_p$ we have
\begin{align*}
\mathcal{R}_p\supset \mathcal{V}_p\setminus \{a_p\pmod p\}.   
\end{align*}

By the Chinese remainder theorem, $\mathcal{R}_q$ (respectively, $\mathcal{V}_q$) is the Cartesian product of $\mathcal{R}_p$ (respectively, $\mathcal{V}_p$) over $p\mid q$, so we must have $\psi(x)=-1$ for all $x\in \mathcal{V}_q$ that satisfy $x\not \equiv a_p\pmod p$ for all $p\mid q$. Combining this with Lemma~\ref{le_firsthalf}, and denoting by $\chi$ the Legendre symbol $\pmod q$, we have 
\begin{align}\label{e201}
\psi(x)=\chi(4x^2+1)\,\, \textnormal{whenever}\,\, x\not \equiv a_p\pmod p\,\,\forall\,\ p\mid q.    
\end{align}

Let $\psi'(x)\coloneqq \psi(x)\chi(4x^2+1)$. Observe that by~\eqref{e200} $\psi'$ also obeys the functional equation~\eqref{e2}. Let $x_0'$ be such  that $\psi'(x_0')=-1$ (if no such $x_0'$ exists, we are done). Let $y\not \equiv a_p\pmod p$ for all $p\mid q$. Then substituting $(x_0',y,0)$ in place of $(x,y,z)$ in~\eqref{e2} (with $\psi$ replaced with $\psi'$ in~\eqref{e2}), from~\eqref{e201} we obtain
\begin{align*}
-1=\psi'\left(\frac{-x_0'-y}{4x_0'y-1}\right)
\end{align*}
To get a contradiction from this, it suffices to show that there is an integer $y$ for which  $y,(-x_0'-y)/(4x_0'y-1)\not \equiv a_p\pmod p$ for all $p\mid q$, as then $\psi'\left(\frac{-x_0'-y}{4x_0'y-1}\right)=1$. Note that $(-x_0'-y)/(4x_0'y-1)\not \equiv a_p\pmod p$ unless $y\equiv b_p\pmod p$ for some $b_p$, so by the Chinese remainder theorem we can choose $y$ that satisfies $y\not \equiv a_p,b_p\pmod p$ for all $p\mid q$. The proof is now complete.
\end{proof}

\subsection{Negative Pell equations}

We then implement step 4, the last step in the proof of Theorem~\ref{theo_quadprod}. It is here that the assumption that we are working with shifted products of the polynomial $x^2+1$ (as opposed to $x^2+D$) is quite useful, since otherwise instead of the negative Pell equation $x^2-py^2=-1$ we would encounter generalized Pell equations $x^2-py^2=D$, and it seems harder in this general case to show a property which we will need, namely that there exists a  congruence class $a\pmod c$ such that the equation is always solvable when $p\equiv a\pmod c$.

\begin{lemma} Let $r,s\in \{0,1\}$ and $q\geq 1$ an integer all of whose prime factors are $\equiv -1\pmod 4$. There is no real Dirichlet character $\chi \pmod q$ such that $\lambda(4n^2+1)=(-1)^{rn+s}\chi(4n^2+1)$ for all large enough $n$.
\end{lemma}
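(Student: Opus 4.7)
My plan is to suppose for contradiction that $\lambda(4n^2+1) = (-1)^{rn+s}\chi(4n^2+1)$ holds for all $n \geq N_0$, and to derive a contradiction via the theory of the negative Pell equation. The argument splits into two phases: first combine Legendre's theorem on $x^2 - py^2 = -1$ for primes $p \equiv 1 \pmod 4$ with Dirichlet's theorem to pin down $r, s$, and $\chi$, and then use solvability of the negative Pell equation at a well-chosen composite modulus to kill the remaining case.

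For the first phase, Legendre's theorem guarantees that for every prime $p \equiv 1 \pmod 4$ the equation $x^2 - py^2 = -1$ has a solution, and a mod-$4$ check forces $x$ to be even, so that $4n^2+1 = py^2$ with $y$ odd; via odd powers of the fundamental unit, infinitely many such solutions exist, so $n$ may be taken arbitrarily large. Substituting into the hypothesis, and noting that every prime factor of $y$ divides $4n^2+1$ and is thus $\equiv 1 \pmod 4$ (in particular coprime to $q$), one obtains $\chi(p) = (-1)^{rn+s+1}$. A mod-$8$ calculation using $y$ odd gives $p \equiv 4n^2+1 \pmod 8$, so $n$ is even iff $p \equiv 1 \pmod 8$ and odd iff $p \equiv 5 \pmod 8$. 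Since $\gcd(q,8)=1$, Dirichlet's theorem supplies, for each $a \in (\mathbb{Z}/q\mathbb{Z})^\times$, arbitrarily large primes $p \equiv a \pmod q$ in each of the residue classes $1, 5 \pmod 8$; matching the two resulting constraints forces $r = 0$ and $\chi$ to be constant on $(\mathbb{Z}/q\mathbb{Z})^\times$ (hence principal), and thus $s = 1$.

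The hypothesis has collapsed to $\lambda(4n^2+1) = -1$ for all large $n$. For the second phase I would apply the negative Pell equation at the composite $D = 65 = 5 \cdot 13$: the solution $(x,y)=(8,1)$ of $x^2 - 65 y^2 = -1$ generates the family $x_k + y_k\sqrt{65} = (8 + \sqrt{65})^{2k+1}$, with each $x_k$ even by the same mod-$4$ argument and $x_k \to \infty$. Setting $n = x_k/2$ gives $4n^2+1 = 65 y_k^2$ with $n$ arbitrarily large, so $\lambda(4n^2+1) = \lambda(5)\lambda(13)\lambda(y_k)^2 = +1$, contradicting $\lambda(4n^2+1) = -1$. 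The main obstacle is that the negative Pell equation is not always solvable at composite moduli, so this last step rests on exhibiting a specific $D$ that is a product of primes $\equiv 1 \pmod 4$ (making $\lambda(D) = +1$) with $x^2 - Dy^2 = -1$ solvable; the choice $D = 65$, witnessed by $(8,1)$, suffices.
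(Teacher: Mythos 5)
Your proof is correct and follows essentially the same route as the paper: Legendre's theorem for $x^2-py^2=-1$ at primes $p\equiv 1\pmod 4$, combined with Dirichlet's theorem to force $\chi$ to be principal (the paper instead pigeonholes on the parity of $n$ and only concludes $\chi(a)=1$ for $a\equiv 1\pmod 4$; your mod-$8$ observation that the parity of $n$ is determined by $p\pmod 8$ is a clean refinement that also pins down $r=0$ and $s=1$ directly), followed by a composite negative Pell equation whose $\lambda$-value is $+1$ to reach the contradiction. The only organizational difference is that the paper first rules out $\lambda(16n^2+1)$ being eventually constant using the two equations $16n^2+1=17y^2$ and $16n^2+1=65y^2$ and then reduces the general case to that, whereas you need only $D=65$ because you have already determined the sign $(-1)^{rn+s}$ completely.
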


\begin{proof}
Suppose for the sake of contradiction that such a character $\chi\pmod q$ exists.

If $q=1$, then $\lambda(16n^2+1)$ is constant for all large enough $n$. Note that the generalized Pell equations
\begin{align}\label{e20}
16n^2+1=17y^2,\quad 16n^2+1=5\cdot 13y^2 
\end{align}
both have a solution (one can take $(n,y)$ to be $(1,1)$ and $(2,1)$, respectively). But by the theory of Pell equations  (see~\cite[Theorem 1]{borwein-choi-ganguli}) this implies that  both equations in~\eqref{e20} have infinitely many positive integer solutions, so $\lambda(16n^2+1)$ is not eventually constant. We may henceforth assume that $q\geq 2$.

By a theorem of Legendre (see~\cite[Proposition 2.1]{legendre}), the negative Pell equation 
\begin{align}\label{e19}
x^2-py^2=-1    
\end{align} 
has a positive integer solution $(x,y)$ for any given prime $p\equiv 1\pmod 4$. By the theory of Pell equations (\cite[Theorem 1]{borwein-choi-ganguli}), this implies that $x^2-py^2=-1$ has infinitely many positive integer solutions for such $p$. Working modulo $4$, we see that any solution to~\eqref{e19} must satisfy $x\equiv 0\pmod 2$, and thus there are infinitely many positive solutions $(n,y)$ to
\begin{align}\label{e19b}
4n^2+1=py^2.    
\end{align}
Moreover, any such solution satisfies $(y,q)=1$, since $-1$ is a quadratic nonresidue modulo any prime divisor of $q$. By the pigeonhole principle, we may pick $v\in \{0,1\}$ such that $n\equiv v\pmod 2$ for infinitely many solutions $(n,y)$ to~\eqref{e19b}.

But now~\eqref{e19b} implies that
\begin{align*}
(-1)^{rv+s}=\lambda\chi(4n^2+1)=\lambda\chi(py^2)=-\chi(p)    
\end{align*}
for any prime $p\equiv 1\pmod 4$. By Dirichlet's theorem, for any integer $a$ with  $a\equiv 1\pmod 4$ and $(a,q)=1$ there are infinitely many primes $p\equiv a\pmod q$, $p\equiv 1\pmod 4$. Hence $\chi(a)=(-1)^{rv+s+1}$ for all $(a,q)=1$, $a\equiv 1\pmod 4$. In particular, taking $a=1$ we see that $rv+s+1\equiv 0\pmod 2$, so $\chi(a)=1$ whenever $a\equiv 1\pmod 4$, $(a,q)=1$. Thus, $\chi(4x^2+1)=1$ for every $x$, so we conclude that  $(-1)^{2rn+s}\chi(4(2n)^2+1)=\lambda(16n^2+1)$ is eventually constant, which we already ruled out. This proves the lemma.
\end{proof}

\section{Reducible cubics}

In this section, we will prove Theorem~\ref{theo_cubic}. The proof strategy is to use some algebraic identities, which result from the multiplicativity of the norm map in $\mathbb{Q}(\sqrt{-d})$, to reduce the claim to Corollary~\ref{cor_linear}.

\begin{proof}[Proof of Theorem~\ref{theo_cubic}.]  Let $P(x)=x(x^2-Bx+C)$, and define the discriminant
\begin{align}\label{e32}
\Delta\coloneqq B^2-4C.    
\end{align}
We may assume that $\Delta$ is not a square, as otherwise $P(x)$ factors into linear factors over $\mathbb{Q}$, in which case we may directly apply Corollary~\ref{cor_linear}.

By shifting the $x$ variable, we may factorize
\begin{align*}
&8P\left(x+\frac{B}{2}\right)=(2x+B)((2x)^2-\Delta),\quad \textnormal{if}\quad B\equiv 0\pmod 2\\
&\textnormal{or}\quad 8P\left(x+ \frac{B+1}{2}\right)=(2x+1+B)((2x+1)^2-\Delta),\quad \textnormal{if}\quad B\equiv 1\pmod 2    
\end{align*}
 Denoting $Q(x)=(x+B)(x^2-\Delta)$, in either case we conclude that there exists a constant $L\geq 1$ such that for all $X\geq 1$ we have
\begin{align}\label{e211}
|\{n\leq X\colon  \,\,\lambda(P(n))=-v|\leq |\{n\leq 2X+L,n\equiv B\pmod 2\colon  \,\, \lambda(Q(n))=v\}|.
\end{align}

Let 
\begin{align*}
S_v&\coloneqq \{n\in \mathbb{N}\colon  \,\, \lambda(P(n))=v\},\\
S_v'&\coloneqq \{n\in \mathbb{N},n\equiv B\pmod 2\colon   \,\, \lambda(Q(n))=v\}.
\end{align*}
We assume for the sake of contradiction that 
\begin{align*}
|S_{-v}\cap [1,X]|\geq X-o(\sqrt{X})\quad \textnormal{for}\quad X\in \mathcal{X},    
\end{align*}
 where $\mathcal{X}\subset \mathbb{N}$ is some infinite set. Then by~\eqref{e211} we have
 \begin{align*}
 |S'_{v}\cap [1,2X]|\geq X-o(\sqrt{X})\quad \textnormal{for}\quad X\in \mathcal{X}.        
 \end{align*}
 The key observation is that, for any even integer $k\neq 0$ and for $n>|k|$ satisfying\\
(i) $k\mid n(n+k)-\Delta$;\\
(ii) $n,n+k,(n(n+k)-\Delta)/|k|\in S'_{v}$,\\
(iii) $n\equiv B\pmod 2$, $(n(n+k)-\Delta)/|k|\equiv B\pmod 2$,\\
we have 
\begin{align}\label{e212}\begin{split}
\lambda(n+B)\lambda(n+B+k)&=\lambda(n^2-\Delta)\lambda((n+k)^2-\Delta) \\
&=\lambda((n^2-\Delta)((n+k)^2-\Delta))\\
&=\lambda((n(n+k)-\Delta)^2-\Delta k^2)\\
&=\lambda\left(\left(\frac{n(n+k)-\Delta}{|k|}\right)^2-\Delta\right)\\
&=v \lambda\left(\frac{n(n+k)-\Delta}{|k|}+B\right)\\
&=v\lambda(|k|)\lambda(n(n+k)+B|k|-\Delta),
\end{split}
\end{align}
using properties (ii)--(iii) on the first and fifth lines.
We wish to choose $k$ so that the discriminant of $x(x+k)+B|k|-\Delta$ is a square, since then that polynomial factorizes over $\mathbb{Z}[x]$. We thus wish to find an integer solution $(k,y)$ to
\begin{align*}
k^2-4B|k|+4\Delta=y^2,    
\end{align*}
or equivalently to
\begin{align}\label{e36}
(|k|-2B)^2-y^2=-4\Delta+(2B)^2=16C.   
\end{align}
This equation is satisfied if
\begin{align}\label{e33}
|k|=2B+2C+2,y=2C-2\quad \textnormal{or}\quad |k|=2B-2C-2, y=-2C+2.    
\end{align}
Since $B\geq 0$, the first possibility in~\eqref{e33} is acceptable whenever $C\geq 0$ (and we can take $k=2B+2C+2>0$). If in turn $C<0$, the second possibility in~\eqref{e33} is acceptable (and we can take $k=2B-2C-2>0$), unless $C=-1,B=0$. But in this special case $\Delta=4$ is a perfect square, which was excluded.   

Now that we have found a suitable value of $k$, we will show that conditions (i) and (iii) hold for all large $n$ belonging to some residue class. Thus we need $n^2-\Delta\equiv 0\pmod{|k|}$, $n\equiv B\pmod 2$ and $n(n+k)-\Delta\equiv B|k|\pmod{2|k|}$. Note that all three conditions hold if 
\begin{align}\label{e161}
 n^2-\Delta\equiv 0\pmod{2|k|}.    
\end{align}
Hence, it suffices to find a solution to~\eqref{e161}.

If $y$ is as in~\eqref{e36}, keeping in mind that $k$ is even, we have 
\begin{align*}
y^2-4\Delta\equiv \begin{cases}0\pmod{4|k|},\,\,k\equiv 0\pmod 4\\0\pmod{2|k|},\,\, k\equiv 2\pmod 4.\end{cases}
\end{align*}
Now, if $8\mid k$, then we can divide this congruence by $4$ to see that $x^2\equiv \Delta \pmod{|k|}$ is solvable, and moreover, by Hensel's lemma the solvability of $x^2\equiv \Delta\pmod{8}$ implies the solvability of $x^2\equiv \Delta\pmod{2^{\alpha}}$ for all $\alpha\geq 0$, so $x^2\equiv \Delta \pmod{2|k|}$ is also solvable. If in turn $2\mid \mid k$, we see that $x^2\equiv \Delta \pmod{|k|/2}$ is solvable. But also the congruence $x^2\equiv \Delta \pmod 4$ is solvable, as necessarily $\Delta\equiv B^2\in \{0,1\}\pmod 4$. Hence, $x^2\equiv \Delta \pmod{2|k|}$ is solvable also in this case. Lastly, suppose that $4\mid \mid k$. Since $|k|=2B\pm(2C+2)$, we have $B\not \equiv C\pmod 2$. But then $\Delta=B^2-4C\in \{0,1,4\}\pmod 8$. Therefore, $x^2\equiv \Delta\pmod 8$ is solvable, so $x^2\equiv \Delta\pmod{2|k|}$ is solvable as well.  

Now consider the set 
\begin{align*}
T_{v}\coloneqq \{n\equiv n_0\pmod{2|k|}\colon  \,\, n,n+k, (n(n+k)-\Delta)/|k| \in S'_{v}\},
\end{align*}
where $n_0$ is a solution to $x^2\equiv \Delta\pmod{2|k|}$. Since $S'_{v}\cap [1,2X]$ contains $\geq X-o(\sqrt{X})$ elements for $X\in \mathcal{X}$, the set $T_{v}$ contains $(1-o(1))\sqrt{X}/(2|k|)$ numbers $n\leq  \sqrt{X}$, $n\equiv n_0 \pmod{2|k|}$ for $X\in \mathcal{X}$. Recall~\eqref{e212} and~\eqref{e36}. For $n\in T_v$, the discriminant of $x(x+k)+B|k|-\Delta$ is a square, so there exist integers $t_1,t_2$ (independent of $n$) such that
\begin{align*}
\lambda(n+B)\lambda(n+B+k)=v\lambda(|k|)\lambda(n+t_1)\lambda(n+t_2),\quad n\in T_v.    
\end{align*}
 Thus, we have
\begin{align*}
&\mathbb{E}_{m\leq \sqrt{X}/(2|k|)}\lambda(2|k|m+n_0+t_1)\lambda(2|k|m+n_0+t_2)\lambda(2|k|m+n_0+B)\lambda(2|k|m+n_0+B+k)\\
&=v\lambda(|k|)+o(1)    
\end{align*}
for $x\in \mathcal{X}$. However, this contradicts Corollary~\ref{cor_linear}, once we show that the polynomial
\begin{align}\label{eq3}
(2|k|x+n_0+t_1)(2|k|x+n_0+t_2)(2|k|x+n_0+B)(2|k|x+n_0+B+k)    
\end{align} 
is not a square of another polynomial. For this it suffices to show that 
\begin{align*}
|\{t_1,t_2,B,B+k\}|>2.    
\end{align*} 
Since $B\neq B+k$, and since $t_1\neq t_2$ by the assumption that $\Delta$ is not a perfect square, the claim above can be false only if $B=t_1$, $B+k=t_2$ or $B=t_2$, $B+k=t_1$. Both cases are similar; let us consider the first case. In this case, we also have $t_2-t_1=k$. But, by Vieta's formulas, we have $t_1+t_2=-k$, so  $(t_1,t_2)=(-k,0)$. But now $B=-k<0$, which contradicts the fact that $k$ was chosen to be positive and $B\geq 0$. \end{proof}

\section{Almost all polynomials}

We shall now prove Theorem~\ref{thm_almostall}. The proof of part (i) is based on the Gowers uniformity of non-pretentious multiplicative functions, established by Frantzikinakis and Host~\cite{fh-jams}, whereas the proof of part (ii) applies Tao's two-point logarithmic Elliott conjecture~\cite{tao-chowla}.

\begin{proof}[Proof of Theorem~\ref{thm_almostall}] (i) 
Let $N$ be a parameter tending to infinity. By the union bound, it suffices to prove for every fixed $v$ with $v^q=1$ that $g(P(n))=v$ for some $n$ for almost all degree $d$ polynomials with positive leading coefficient and of height $\leq N$.

Given a tuple $\bm{a}=(a_d,\ldots, a_1,a_0)\in [-N,N]^{d+1}$, let $P_{\bm{a}}(x)\coloneqq a_dx^d+\cdots +a_1x+a_0$ be the corresponding polynomial. Let $\mathcal{E}_N$ be the set of tuples in $\bm{a}\in [-N,N]^{d+1}$ for which $g(P_{\bm{a}}(n))\neq v$ for all $n\geq 1$ (recall that we extend all multiplicative functions as even functions to the negative integers). Suppose for the sake of contradiction that 
\begin{align*}
|\mathcal{E}_N|\geq \delta N^{d+1}    
\end{align*}
for some absolute constant $\delta>0$ and for $N\in \mathcal{N}$, where $\mathcal{N}\subset \mathbb{N}$ is some infinite set. Let $H=H(N)$ be a function tending to infinity extremely slowly.

Then for any $\bm{a}\in \mathcal{E}_N$ we have
\begin{align*}
\prod_{1\leq h\leq H}1_{g(P_{\bm{a}}(h))\neq v}=1.  
\end{align*}
Summing this over $a_i$, we see that
\begin{align}\label{eq10}
\sum_{a_0,a_1,\ldots, a_{d}\in [-N,N]}\prod_{1\leq h\leq H}1_{g(P_{\bm{a}}(h)))\neq v}\geq \delta N^{d+1}.     
\end{align}

Since $g$ takes values in the $q$th roots of unity, we have
\begin{align}\label{eq13}
1_{g(n)\neq v}=1-\frac{1}{q}\sum_{j=0}^{q-1}(\overline{v}g(n))^j,    
\end{align}
and substituting this to~\eqref{eq10} yields
\begin{align*}
\sum_{a_0,a_1,\ldots, a_{d}\in [-N,N]}\,\,\sum_{[1,H]=S_0\sqcup S_1\sqcup\cdots \sqcup S_{q-1}}\prod_{h\in S_0}\left(1-\frac{1}{q}\right)\prod_{1\leq j\leq q-1}\prod_{h\in S_j}-\frac{\overline{v}^j}{q}g(P_{\bm{a}}(h))^{j}\geq \delta N^{d+1}.  
\end{align*}
By the pigeonhole principle, we can find a fixed tuple $(a_0',\ldots, a_{d-2}')$ and a fixed tuple $(S_0',\ldots, S_{q-1}')$ such that
\begin{align}\label{e11b}
&\left|\sum_{x,y\in [-N,N]}\prod_{1\leq j\leq q-1}\prod_{h\in S_j'}-\frac{\overline{v}^j}{q}g\left(h^d x+h^{d-1}y+\sum_{j=0}^{d-2} a_j'h^j\right)^{j}\right|\geq q^{-H}\delta^2 N^{2}     
\end{align}
We claim that the tuple
\begin{align*}
\Psi\coloneqq \left(h^dy+h^{d-1}x\sum_{j=0}^{d-2}a_j'h^j\right)_{h\in [H]}    
\end{align*}
of $H$ linear forms in two variables has Cauchy--Schwarz complexity $\leq H$ (in the sense of~\cite[Definition 1.5]{gt-linear}). Since the tuple has $H$ components, this is trivially the case unless the system in question has infinite complexity. If $\Psi$ had infinite complexity, there would exist two forms in $\Psi$ that are affine-linearly dependent. However, that would imply that $(C_1h^d,C_1h^{d-1})=(C_2(h')^d,C_2(h')^{d-1})$ for some $h\neq h'$ and some constants $C_1,C_2$ not both zero, but this is clearly impossible.

Now, by the generalized von Neumann theorem in the form of~\cite[Lemma 9.6]{fh-jams}, the left-hand side of ~\eqref{e11b} is 
\begin{align}\label{eq12}
\leq C(H) N^{2}\max_{1\leq j<q}\|g^j\|_{U^{H}[N]}+o_{N\to \infty}(N^2)    
\end{align}
for some $C(H)\geq 1$. 

Recall that by assumption there is a function $\psi(N)$ tending to infinity slowly such that
\begin{align}\label{e30}
 \mathbb{D}(g^j,\chi;N)\geq \psi(N)    
\end{align}
for all $1\leq j\leq q-1$. Since $g$ takes values in the $q$th roots of unity, by Lemma~\ref{le_pret} we can upgrade~\eqref{e30} to a stronger form
\begin{align}\label{e160}
\inf_{|t|\leq N^2}\mathbb{D}(g^j,\chi(n)n^{it};N)\gg_{q} \psi(N).
\end{align}

By a result of Frantzikinakis and Host~\cite[Theorem 2.5]{fh-jams}, and~\eqref{e160} (which implies that $g^j$ is aperiodic), we have
\begin{align}\label{eq40}
\|g^j\|_{U^k[N]}=o_{k,N\to \infty}(1)    
\end{align}
for any fixed $k\in \mathbb{N}$ and all $1\leq j\leq q-1$. Now, since~\eqref{eq40} holds for every $k$, by a standard argument by contradiction there must exist some slowly growing function $K(N)$ such that~\eqref{eq40} holds \emph{uniformly} for $K\leq K(N)$ with decay rate $q^{-2K(N)}$, that is,
\begin{align}\label{e41}
\sup_{k\leq K(N)}\|g^j\|_{U^k[N]}\leq q^{-2K(N)}.    
\end{align}
We now take $H=H(N)$ be a function tending to infinity so slowly that $H(N)\leq K(N)$ and $C(H)\leq K(N)$, and assume without loss of generality that the $o_{N\to \infty}(N^2)$ term in~\eqref{eq12} is bounded by $N^2q^{-2H(N)}$. Comparing~\eqref{e11b},~\eqref{eq12} and~\eqref{e41}, this means that $\delta$ tends to $0$ as $N\to \infty$, contrary to the assumption that $\delta\gg 1$ for $N\in \mathcal{N}$. The proof is now complete.

(ii) Let $H=H(N)$ be a function tending to infinity sufficiently slowly. By the union bound, it suffices to show that for each fixed $v$ with $v^q=1$ we have $g(n^d+a)=v$ for some $n\leq H$, unless $a\in [1,N]$ belongs to an exceptional set of logarithmic density $0$. 

Let $\mathcal{E}_N$ be the set of $a\in [1, N]$ for which $g(n^d+a)\neq v$ for all $n\leq H$. Suppose that there exists a constant $\delta>0$ such that $\mathbb{E}_{a\leq N}^{\log}1_{\mathcal{E}_N}(a)\geq \delta $ for $N\in \mathcal{N}$, where $\mathcal{N}\subset \mathbb{N}$ is some infinite set. Then for $N\in \mathcal{N}$ we have 
\begin{align*}
\mathbb{E}^{\log}_{a\leq N}\left|\sum_{h\leq H}1_{g(h^d+a)\neq v}-(1-1/q)H\right|^2\gg_q \delta H^2.    
\end{align*}
Expanding out the square, we get
\begin{align*}
\sum_{1\leq h_1,h_2\leq H} \mathbb{E}^{\log}_{a\leq N}(1_{g(h_1^d+a)\neq v}-(1-1/q))(1_{g(h_2^d+a)\neq v}-(1-1/q))\gg_q \delta H^2.    
\end{align*}
The contribution of the terms $h_1=h_2$ is $\ll H$, so 
\begin{align*}
\sum_{1\leq h_1<h_2\leq H} \mathbb{E}^{\log}_{a\leq N}(1_{g(h_1^d+a)\neq v}-(1-1/q))(1_{g(h_2^d+a)\neq v}-(1-1/q))\gg_q \delta H^2.    
\end{align*}
Using the Fourier expansion~\eqref{eq13} and the pigeonhole principle, we see that 
\begin{align}\label{eq56}
\sum_{1\leq h_1<h_2\leq H}\mathbb{E}^{\log}_{a\leq N}g^{j_1}(a+h_1^d)g^{j_2}(a+h_2^d)\gg_{q}\delta H^2    
\end{align}
for some $1\leq j_1,j_2\leq q-1$ and for $N\in \mathcal{N}$. 

Now, by Tao's two-point logarithmic Elliott conjecture~\cite[Corollary 1.5]{tao-chowla} and~\eqref{e160}, the left-hand side of~\eqref{eq56} is $o_{N\to \infty}(H^2)$ (here we need the assumption that $H(N)$ tends to infinity arbitrarily slowly with $N$ to account for the lack of uniformity of the two-point correlation result with respect to the shifts $h_i$). But this contradicts the assumption that $\delta\gg 1$. Thus the claim of the theorem follows.
\end{proof}

\section{Multivariate polynomials}

\begin{proof}[Proof of Theorem~\ref{thm_multivariate}] (a) We claim that it suffices to show that for any $1\leq a,b\leq 100$ there exists \emph{one} coprime pair $(m,n)$ for which $\lambda(am^3+bn^4)=v$. Indeed, if $h$ is such that $\lambda(h)=v$ and $h=am^3+bn^4$ for some coprime $(m,n)$, then the Diophantine equation 
\begin{align}\label{eq15}
hz^2=am^3+bn^4     
\end{align}
has a solution $(z,m,n)\in \mathbb{N}^3$ with the greatest common divisor of $m,n,z$ being $1$. The Diophantine equation~\eqref{eq15} is a generalized Fermat equation, and since the sum of the inverses of the exponents satisfies $1/2+1/3+1/4>1$, a result of Beukers~\cite{beukers} says that the existence of one nonzero coprime solution to~\eqref{eq15} implies the existence of infinitely many such solutions (in fact, the solutions can be parametrized in terms of polynomials). Consequently, whenever we have we have $\lambda(am^3+bn^4)=v$ for one coprime pair $(m,n)$, we have the same for infinitely many coprime pairs $(m,n)$. 

To find one coprime solution $(m,n)$ to $\lambda(am^3+bn^4)=v$ for each $1\leq a,b\leq 100$ and each $v\in \{-1,+1\}$, we perform a brute force search. It certainly suffices to find for each $1\leq a,b\leq 100$ two integers $m_{+}$, $m_{-}$ such that $\lambda(am_{\pm}^3+b)=\pm 1$. Performing a very quick numerical search using Python\footnote{\label{note1}The Python 3 code for this computation can be found along with the arXiv submission of this paper.}, we see that the desired $m_{+},m_{-}$ always exist in $[1,20]$. 

(b) The proof of part (b) is similar to that of part (a), since Beukers's result applies to the equation $hz^2=am^2+bn^k$ as well (as $1/2+1/2+1/k>1$). Thus, we simply need to show that $\lambda(am_{\pm}^2+b\cdot 1^k)=\pm 1$ has at least one solution for each choice of the $\pm$ sign and for each $1\leq a,b\leq 100$ and $k\geq 1$. Clearly, the value of $k$ makes no difference, as we have specialized to $n=1$. Finding these $m_{\pm}$ is again a very quick numerical search using Python (see footnote~\ref{note1}), and suitable $m_{+},m_{-}$ turn out to always exist in the range $[1,30]$.
\end{proof}

\bibliography{polyrefs}
\bibliographystyle{plain}

\end{document}